\newcommand{\pdag}[1]{{}^{\dag}\! #1}
\mathchardef\hyphen="2D
\renewcommand{\inv}{\mathrm{inv}}
\newcommand{\totimes}{\widetilde{\otimes}}
\DeclarePairedDelimiter\floor{\lfloor}{\rfloor}
\newtheorem*{cvthm}{Convolution Theorem over {$\bC$}}
\newtheorem*{mainthm}{Main Theorem}
\title{$p$-adic Generalized Hypergeometric Equations from the Viewpoint of Arithmetic $\sD$-modules.}
\author{Kazuaki Miyatani}
\begin{document}
\maketitle

\begin{abstract}
    We study the $p$-adic (generalized) hypergeometric equations
    by using the theory of multiplicative convolution of arithmetic $\sD$-modules.
    As a result, we prove that the hypergeometric isocrystals with suitable rational parameters
    have a structure of overconvergent $F$-isocrystals.
\end{abstract}

\section{Introduction.}

Katz \cite[5.3.1]{Katz90ESDE} discovered that the hypergeometric $\sD$-modules on $\bA^1_{\bC}\setminus\{0\}$
can be described as the multiplicative convolution of hypergeometric $\sD$-modules of rank one.
Precisely speaking, Katz proved the statement (ii) in the following theorem (the statement (i) is trivial
but put to compare with another theorem later).

\begin{cvthm}
Let $\balpha=(\alpha_1,\dots,\alpha_m)$ and $\bbeta=(\beta_1,\dots,\beta_n)$ be two sequences of complex numbers
and assume that $\alpha_i-\beta_j$ is not an integer for any $i,j$.
Let $\sHyp(\balpha;\bbeta)$ be the $\sD$-module on $\bG_{\rmm,\bC}$ defined by the 
hypergeometric operator
\[
    \Hyp(\balpha;\bbeta)=\prod_{i=1}^m(x\partial-\alpha_i)-x\prod_{j=1}^n(x\partial-\beta_j),
\]
that is,
\[
    \sHyp(\balpha;\bbeta)\defeq\sD_{\bA^1_{\bC}\setminus\{0\}}/\sD_{\bA^1_{\bC}\setminus\{0\}}\Hyp(\balpha;\bbeta).
\]
Then, $\sHyp(\balpha;\bbeta)$ has the following properties.

\textup{(i)} If $m\neq n$, then $\sHyp(\balpha;\bbeta)$ is a free $\sO_{\bG_{\rmm,\bC}}$-module of rank $\max\{m,n\}$.
If $m=n$, then the restriction of $\sHyp(\balpha;\bbeta)$ to $\bG_{\rmm,\bC}\setminus\{1\}$ is a free $\sO_{\bG_{\rmm,\bC}\setminus\{1\}}$-module of rank $m$.

\textup{(ii)} We have an isomorphism
\[
    \sHyp(\balpha;\bbeta)\cong \sHyp(\alpha_1;\emptyset)\ast\dots\ast\sHyp(\alpha_m;\emptyset)
    \ast\sHyp(\emptyset;\beta_1)\ast\dots\ast\sHyp(\emptyset;\beta_n),
\]
where $\ast$ denotes the multiplicative convolution of $\sD_{\bG_{\rmm,\bC}}$-modules.
\end{cvthm}

Besides the hypergeometric $\sD$-modules over the complex numbers,
Katz also studied the $\ell$-adic theory of hypergeometric sheaves.
Let $k$ be a finite field with $q$ elements,
let $\psi$ be a non-trivial additive character on $k$ and let $\bchi=(\chi_1,\dots,\chi_m), \brho=(\rho_1,\dots,\rho_n)$
be sequences of characters on $k^{\times}$ satisfying $\chi_i\neq\rho_j$ for all $i,j$.
Then, he \emph{defined} the $\ell$-adic hypergeometric sheaves $\sH_{\psi,!}^{\ell}(\bchi,\brho)$ on $\bG_{\rmm,k}$
by using the multiplicative convolution
of $\sH_{\psi,!}^{\ell}(\chi_i;\emptyset)$'s and $\sHyp_{\psi,!}^{\ell}(\emptyset;\rho_j)$'s,
where these convolvends are defined by using Artin--Schreier sheaves and Kummer sheaves.

This $\ell$-adic sheaf $\sH_{\psi,!}^{\ell}(\bchi;\brho)$ has a property similar to (i) in the above theorem.
Namely, it is a smooth sheaf on $\bG_{\rmm,k}$ of rank $\max\{m,n\}$ if $m\neq n$,
and its restriction to $\bG_{\rmm,k}\setminus\{1\}$ is a smooth sheaf of rank $m$ if $m=n$
\cite[Theorem 8.4.2]{Katz90ESDE}.

Moreover, by definition, $\sH_{\psi,!}^{\ell}(\bchi;\brho)$ has a Frobenius structure.
The Frobenius trace functions of the $\ell$-adic hypergeometric sheaves are called the ``hypergeometric functions over finite field''.
This function gives a generalization of the classical Kloosterman sums.
Moreover, this function has an intimate connection with the Frobenius action on the \'etale cohomology of 
a certain class of algebraic varieties (for example, Calabi--Yau varieties)
over finite fields.
(The hypergeometric function over finite field is also called the ``Gaussian hypergeometric function'' by Greene \cite{Greene},
who independently of Katz found this function based on a different motivation.)

The purpose of this article is to develop a $p$-adic counterpart of these complex and $\ell$-adic hypergeometric objects.
This $p$-adic hypergeometric object will have a presentation in terms of the ($p$-adic) differential equation,
and at the same time has a Frobenius structure.
For its formalisation, we exploit the theory of arithmetic $\sD$-modules introduced by Berthelot.
The main theorem of this article is stated as follows.

\begin{mainthm}
    Let $K$ be a complete discrete valuation field of mixed characteristic $(0,p)$ with residue field $k$,
    a finite field with $q$ elements.
    Let $\pi$ be an element of $K$ that satisfies $\pi^{q-1}=(-p)^{(q-1)/(p-1)}$.
    Let $\balpha=(\alpha_1,\dots,\alpha_m)$ and $\bbeta=(\beta_1,\dots,\beta_n)$ be 
    two sequences of elements of $\frac{1}{q-1}\bZ$,
    and assume that $\alpha_i-\beta_j$ is not an integer for any $i,j$.
    Let $\sHyp_{\pi}(\balpha;\bbeta)$ be the $\sD^{\dag}_{\widehat{\bP}^1,\bQ}(\pdag{\{0,\infty\}})$-module defined by
    the $p$-adic hypergeometric differential operator
    \[
        \Hyp_{\pi}(\balpha;\bbeta)=\prod_{i=1}^m(x\partial-\alpha_i)-(-1)^{m+np}\pi^{m-n}x\prod_{j=1}^n(x\partial -\beta_j),
    \]
    that is,
    \[
        \sHyp_{\pi}(\balpha;\bbeta)\defeq\sD^{\dag}_{\widehat{\bP^1_V},\bQ}(\pdag{\{0,\infty\}})/
        \sD^{\dag}_{\widehat{\bP^1_V},\bQ}(\pdag{\{0,\infty\}})\Hyp_{\pi}(\balpha;\bbeta).
    \]
    Then, $\sHyp_{\pi}(\balpha;\bbeta)$ has the following properties.

    \textup{(i)} If $m\neq n$, then $\sHyp_{\pi}(\balpha;\bbeta)$ has a structure of an overconvergent $F$-isocrystal on $\bG_{\rmm,k}$
    of rank $\max{\{m,n\}}$.
    If $m=n$, then the restriction of $\sHyp_{\pi}(\balpha;\bbeta)$ to $\widehat{\bP^1_V}\setminus\{0,1,\infty\}$ has a structure of an overconvergent $F$-isocrystal on $\bG_{\rmm, k}\setminus\{1\}$ of rank $m$.

    \textup{(ii)} We have an isomorphism
    \[
        \sHyp_{\pi}(\balpha;\bbeta)\cong\sHyp_{\pi}(\alpha_i;\emptyset)\ast\dots\ast\sHyp_{\pi}(\alpha_m;\emptyset)\ast
        \sHyp_{\pi}(\emptyset;\beta_1)\ast\dots\ast\sHyp_{\pi}(\emptyset;\beta_n).
    \]
    
    \textup{(iii)} Let $x$ be a closed point in $\bG_{\rmm,k}$ if $m\neq n$, and in $\bG_{\rmm,k}\setminus\{1\}$ of $m=n$.
    Then, the Frobenius trace of the overconvergent $F$-isocrystal obtained in (i) at $x$ equals that of
    the $\ell$-adic hypergeometric sheaf $\sH_{\psi,!}^{\ell}(\bchi;\brho)$,
    where $\psi$, $\bchi$ and $\brho$ are the corresponding (sequence of) characters to $\pi$, $\balpha$ and $\bbeta$ respectively.
\end{mainthm}

In this theorem, (i) states not only that it is a free $\sO$-module of the desired rank, but also that this $\sD$-module satisfies a kind of convergence condition and that it has a Frobenius structure.
Despite the triviality of (i) of Convolution Theorem over $\bC$, this is therefore a highly non-trivial statement.

Another difference from the over-$\bC$ case is that we restricted the parameters to rational coefficients.
We expect that, even for other $p$-adic parameters not necessarily coming from characters, the arithmetic hypergeometric $\sD$-modules are overconvergent
isocrystals (not necessarily with Frobenius structure)
if these parameters satisfy the non-Liouville difference condition.
However, since the overholonomicity of arithmetic $\sD$-modules is not preserved by tensor product,
our method is not applicable at least verbatim.
\vspace{10pt}

We briefly explain the strategy of the proof of Main Theorem.

We firstly prove (ii). By a similar argument to the theory of $\sD$-module over the complex numbers,
the convolution with $\sHyp_{\pi}(\alpha_i;\emptyset)$ can be described by using the $p$-adic Fourier transform of $\sD$-modules ``on $\widehat{\bA}^1$''.
We compare $\sHyp_{\pi}(\balpha;\bbeta)$, which is an object ``on $\widehat{\bG_{\rmm}}$'', with
the $\sD$-module ``on $\widehat{\bA}^1$'' associated to $\Hyp_{\pi}(\balpha;\bbeta)$.
(This is a point where we need some $p$-adic calculation.)
By this comparison, we may compute the right-hand side of the isomorphism in (ii)
with the aid of the explicit description of $p$-adic Fourier transform given by Huyghe.

Secondly, (iii) is a corollary of (ii) (see Remark \ref{rem:explicittraces}).

We finally prove (i). It is an essentially classical result that (i) holds for $\sHyp_{\pi}(\alpha_i;\emptyset)$'s and $\sHyp_{\pi}(\emptyset;\beta_j)$'s.
In particular, they are overholonomic $F$-$\sD$-modules.
Because the property of being an overholonomic $F$-$\sD$-module is preserved by six functors as proved by D.\ Caro,
(ii) shows that $\sHyp_{\pi}(\balpha;\bbeta)$ is also an overholonomic $F$-$\sD$-module.
An overholonomic $F$-$\sD$-module is known to be an overconvergent $F$-isocrystal on a dense open subset,
but we have to show that it is so on the ``correct'' open subset.
To do this, we use (iii) to show that $\sHyp_{\pi}(\balpha;\bbeta)$ have the desired rank as an $\sO_{\bX,\bQ}$-module.
By combining this result with some properties of $F$-isocrystals, we can finish the proof of (ii).
\vspace{10pt}

Recently, there appeared an article by Crew \cite{Crew16},
which studies the $p$-adic differential equations from the point of view of rigidity.
He proves the existence of the Frobenius structure of the regular singular rigid overconvergent isocrystals
under suitable conditions.
We note that our Main Theorem (i) in the case where $m=n$ thus proves that
the regular singular differential equations as above are examples of the objects which he is dealing with.

There are other studies on the $p$-adic hypergeometric equations.
Another study by Crew \cite{Crew94CM} deals with them in the case where $n=0$, all $\alpha_i$'s are zero,
$p$ is not divisible by $m$ and $p\neq 2$;
our Main Theorem (i) is proved there in this case.
A theorem due to Tsuzuki \cite[3.3.1, 3.3.3]{Tsuzuki03} is applicable to proving
the overconvergence and the existence of a Frobenius structure for ``Picard--Fuchs equations'' such as $\Hyp(1/2,1/2;0,0)$,
and we may prove Main Theorem (i) by it in these cases.
Our approach is quite different from theirs in the point that
we extensively use the multiplicative convolution and systematically deal with general case including these two cases.

At last, we note that the $p$-adic theory of hypergeometric functions (or exponential sums)
are also studied in terms of partial linear differential equations of rank one on a higher dimensional torus,
for example by Dwork \cite{Dwork} and Adolphson \cite{Adolphson}.
\vspace{10pt}

We conclude this introduction by explaining the structure of the article.

Section 1 is a quick review of the theory of arithmetic $\sD$-modules.

Section 2 is devoted to giving a fundamental properties of multiplicative convolution of arithmetic $\sD$-modules.

Section 3 concerns the arithmetic hypergeometric $\sD$-modules.
In Subsection 3.1, we give a definition of the $p$-adic hypergeometric $\sD$-modules
by using the $p$-adic hypergeometric differential operators, and prove fundamental properties of them.
In Subsection 3.2, we give another definition of the $p$-adic hypergeometric $\sD$-modules
by using the multiplicative convolution, and compare it with the one defined in Subsection 3.1;
Main Theorem (i) is proved here (Theorem \ref{thm:hypandconv}).
Subsection 3.3 gives additional properties of $p$-adic hypergeometric $\sD$-modules
concerning multiplicative convolutions.

Section 4 deals with the hypergeometric isocrystals.
In Subsection 4.1, we proved that the $p$-adic hypergeometric $\sD$-modules are
in fact overconvergent $F$-isocrystals on the desired open subsets.
Here, we finish the proof of Main Theorem ((ii) is proved in Theorem \ref{thm:ocFisoc}. 
For (iii), see Remark \ref{rem:explicittraces}.)
In the last Subsection 4.2, we prove the irreducibility of the hypergeometric isocrystals
and give a characterization of them.

\subsection{Acknowledgements}

The author would like to thank Jeng-Daw Yu for his discussions and a lot of useful comments about this work.
The author would also like to thank Tomoyuki Abe for his answer to my questions about the theory of arithmetic $\sD$-modules.

\subsection{Conventions and Notations.}

Throughout this article, $V$ denotes a complete discrete valuation ring of mixed characteristic $(0,p)$
whose residue field $k$ is a finite field with $q=p^s$ elements.
The fraction field of $V$ is denoted by $K$.
We denote by $|\cdot|$ the norm on $K$ normalized as $|p|=p^{-1}$.
We will assume that $K$ has a primitive $p$-th root of unity after Subsection 3.2.

If $\sP$ is a smooth formal scheme over $\Spf(V)$,
then $\sD^{\dag}_{\sP,\bQ}$ denotes the sheaf of rings of arithmetic differential operators
introduced by Berthelot \cite{BerthelotI}.
Moreover, if $T$ is a divisor of the special fiber of $\sP$, then
$\sD^{\dag}_{\sP,\bQ}(\pdag{T})$ denotes \emph{the sheaf of arithmetic differential operators with
overconvergent singularity along $T$} \cite[4.2]{BerthelotI}.

Let $\sA$ be a sheaf of (not necessarily commutative) rings.
If we say ``$\sA$-module'', we always mean a sheaf of left $\sA$-modules.
$D^{\rb}(\sA)$ denotes the bounded derived category of the category of $\sA$-modules.
$D^{\rb}_{\coh}(\sA)$ denotes the full subcategory of $D^{\rb}(\sA)$
consisting of the complexes whose cohomologies are coherent.

Finally, in this article, if $\kappa$ is a field, a \emph{$\kappa$-variety} means a separated $\kappa$-scheme of finite type.

\section{Arithmetic $\sD$-modules.}

In this section, we recall the theory of arithmetic $\sD$-modules that will be used in this article.

\subsection{Six functors.}
    In this subsection, we fix terminologies and notations,
    and quickly summarize the theory of six-functor formalism of arithmetic $\sD$-modules.
    For a detailed explanation, the reader may consult \cite[Section 1]{AbeCaro}.

    Let $\sP$ be a smooth formal scheme over $\Spf(V)$.
    Caro \cite[D\'efinition 2.1]{Caro09ASENS} defined the subcategory $D^{\rb}_{\ovhol}(\sD^{\dag}_{\sP,\bQ})$ of
    $D^{\rb}_{\coh}(\sD^{\dag}_{\sP,\bQ})$ consisting of \emph{overholonomic $\sD^{\dag}_{\sP,\bQ}$-complexes}.
    Moreover, let $T$ be a divisor of the special fiber of $\sP$.
    The subcategory of $D^{\rb}_{\coh}\big(\sD^{\dag}_{\sP,\bQ}(\pdag{T})\big)$
    consisting of the objects that are overholonomic as $\sD^{\dag}_{\sP,\bQ}$-complex is denoted by
    $D^{\rb}_{\ovhol}\big(\sD^{\dag}_{\sP,\bQ}(\pdag{T})\big)$.
    If a coherent $\sD^{\dag}_{\sP,\bQ}(\pdag{T})$-module is an object
    of $D^{\rb}_{\ovhol}\big(\sD^{\dag}_{\sP,\bQ}(\pdag{T})\big)$, then we say that it is an overholonomic $\sD^{\dag}_{\sP,\bQ}(\pdag{T})$-module.

    It is convenient to introduce here the following terminology.

    \begin{definition}
        (i) A \emph{d-couple} is a pair $(\sP,T)$, where $\sP$ is a smooth formal scheme over $\Spf(V)$ and where $T$ is a divisor of the special fiber of $\sP$ (an empty set is also a divisor).
        
        (ii) A \emph{morphism of d-couples} $\widetilde{f}\colon(\sP',T')\to(\sP,T)$ is
        a morphism $\overline{f}\colon\sP'\to\sP$ such that
        $\overline{f}(\sP'\setminus T')\subset\sP\setminus T$ and that
        $\overline{f}^{-1}(T)$ is a divisor (or empty).
    \end{definition}

    \begin{definition}
        (i) A d-couple $(\sP, T)$ \emph{realizes} the $k$-variety $X$
    if $\sP$ is proper and $X$ is the special fiber of $\sP\setminus T$.

        (ii)
    A morphism of d-couples $\widetilde{f}\colon(\sP',T')\to(\sP,T)$ \emph{realizes} the morphism of $k$-varieties $f\colon X'\to X$
    if $(\sP', T')$ \resp{$(\sP, T)$} realizes $X'$ \resp{$X$} and if $\overline{f}\colon\sP'\to\sP$ induces $f$.
    \end{definition}

    Now, let $X$ be a $k$-variety, and assume that there exists a d-couple $(\sP, T)$ realizing $X$.
    Then, the category $D^{\rb}_{\ovhol}\big(\sD^{\dag}_{\sP,\bQ}(\pdag{T})\big)$ is
    proved to be independent of the choice of $(\sP, T)$ up to a canonical equivalence.
    This category is also denoted by $D^{\rb}_{\ovhol}(X/K)$.

    Even if such a realizing d-couple does not exist,
    the triangulated category $D^{\rb}_{\ovhol}(X/K)$ can be defined for arbitrary realizable $k$-varieties
    (Recall that a $k$-variety is said to be \emph{realizable} if it can be embedded
    into a proper smooth formal scheme over $\Spf(V)$.)
    We do not explain the construction of $D^{\rb}_{\ovhol}(X/K)$ for realizable varieties
    because we have only to use formal properties of this category
    and do not need to refer to the construction.

\begin{definition} Let $X$ be a realizable variety over $k$. (Recall that $k$ is a finite field with $q=p^s$ elements.)
    We denote by $F^{(s)}_{X/k}\colon X\to X$ the $s$-th Frobenius morphism on $X$.
    
    (i) Let $\sM$ be an object of $D^{\rb}_{\ovhol}(X/K)$.
    A \emph{Frobenius structure} on $\sM$ is an isomorphism $\Phi\colon\sM\to (F^{(s)}_{X/k})^{\ast}\sM$.

    (ii) We define the category $F\hyphen D^{\rb}_{\ovhol}(X/K)$ as follows:
    the objects are the pairs $(\sM, \Phi)$ consisting of an object $\sM$ of $D^{\rb}_{\ovhol}(X/K)$ and
    a Frobenius structure $\Phi$ on $\sM$;
    the morphisms are those of $D^{\rb}_{\ovhol}(X/K)$ compatible with the Frobenius structures.
\end{definition}

The category $F\hyphen D^{\rb}_{\ovhol}(X/K)$ is equipped with Grothendieck's six operations.

\vspace{4px}
\textbf{Dual functor.} Let $X$ be a realizable $k$-variety. Then, we have the \emph{dual functor}
    \[
        \bD_X\colon F\hyphen D_{\ovhol}^{\rb}(X/K)\longrightarrow F\hyphen D^{\rb}_{\ovhol}(X/K).
    \]
The functor $\bD_X\circ\bD_X$ is naturally isomorphic to the identity functor on $F\hyphen D^{\rb}_{\ovhol}(X/K)$.

\vspace{4px}
\textbf{Pull-back functors.} Let $f\colon X'\to X$ be a morphism of realizable $k$-varieties.
    Then, we have the extraordinary pull-back functor
    \[
        f^!\colon F\hyphen D^{\rb}_{\ovhol}(X/K)\longrightarrow F\hyphen D^{\rb}_{\ovhol}(X'/K).
    \]
    Moreover, we define the ordinary pull-back functor
    \[
        f^{\plus}\colon F\text{-}D^{\rb}_{\ovhol}(X/K)\longrightarrow F\text{-}D^{\rb}_{\ovhol}(X'/K)
    \]
    by $f^{\plus}=\bD_{X'}\circ f^!\circ\bD_X$.

    If $f$ is an open immersion, then the two functors $f^!$ and $f^{\plus}$ are isomorphic
    \cite[5.5]{Abe14};
    in this case, these two functors are also denoted by $f^{\ast}$.

    Now, let us assume that $f$ is realized by a smooth morphism $\widetilde{f}\colon(\sP',T')\to(\sP,T)$
    of d-couples.
    (A morphism of d-couple is said to be smooth if the morphism $\overline{f}\colon \sP'\to\sP$
    is smooth.)
    In this case, we have a functor
    $\widetilde{f}^!\colon D^{\rb}_{\coh}\big(\sD^{\dag}_{\sP,\bQ}(\pdag{T})\big)\to D^{\rb}_{\coh}\big(\sD^{\dag}_{\sP',\bQ}(\pdag{T'})\big)$
    compatible with the functor $f^!$ above.
    In fact, Berthelot's construction \cite[4.3.3]{Berthelot02Ast} gives
    a functor $\overline{f}^!\colon D^{\rb}_{\coh}\big(\sD^{\dag}_{\sP,\bQ}(\pdag{T})\big)\to
    D^{\rb}_{\coh}\big(\sD^{\dag}_{\sP',\bQ}(\pdag{f^{-1}(T)})\big)$,
    and we have $\widetilde{f}^!(\sM)=\sD^{\dag}_{\sP',\bQ}(\pdag{T'})\otimes\overline{f}^!(\sM)$.

    In the case where $f\colon \sX'\hookrightarrow\sX$ is an open immersion, where $\sP'=\sP$ and where $\overline{f}$ is the identity morphism,
    we also denote $\widetilde{f}^!$ by $\widetilde{f}^{\ast}$.
    The functor $\widetilde{f}^{\ast}$ is exact on the category of coherent $\sD^{\dag}_{\sP,\bQ}(\pdag{T})$-modules
    because a sequence of coherent $\sD^{\dag}_{\sP,\bQ}(\pdag{T'})$-modules is exact
    if and only if its restriction on $\sX'$ is \cite[Proposition 4.3.12 (ii)]{BerthelotI}.

\vspace{4px}
\textbf{Push-forward functors.}
    Let $f\colon X'\to X$ be a morphism of realizable $k$-varieties.
    Then, we have the push-forward functor $f_{\plus}$:
    \[
        f_{\plus}\colon F\hyphen D^{\rb}_{\ovhol}(X'/K)\longrightarrow F\hyphen D^{\rb}_{\ovhol}(X/K).
    \]

    Moreover, we define the extraordinary push-forward functor
    \[
        f_!\colon F\hyphen D^{\rb}_{\ovhol}(X'/K)\longrightarrow F\hyphen D^{\rb}_{\ovhol}(X/K)
    \]
    by $f_!=\bD_X\circ f_{\plus}\circ\bD_{X'}$.

    We have a natural morphism $f_!\to f_{\plus}$, and it is an isomorphism if $f$ is proper.

    We mention here a special case.
    Assume that $f$ is realized by a morphism of d-couples $\widetilde{f}\colon(\sP',T')\to(\sP,T)$.
    Then, we have a push-forward functor \cite[1.1.6]{Caro06CM}
    \[
        \widetilde{f}_{\plus}\colon D^{\rb}_{\coh}\big(\sD^{\dag}_{\sP',\bQ}(\pdag{T'})\big)\to D^{\rb}\big(\sD^{\dag}_{\sP,\bQ}(\pdag{T})\big)
    \]
    and it is compatible with the $f_{\plus}$ above.
    If $f$ is an open immersion, if $\sP'=\sP$ and if $\overline{f}$ is the identity morphism on $\sP$,
    then $\widetilde{f}_{\plus}$ is obtained by considering the complex of $\sD^{\dag}_{\sP,\bQ}$-modules
    as a complex of $\sD^{\dag}_{\sP,\bQ}(\pdag{T})$-module
    via the inclusion $\sD^{\dag}_{\sP,\bQ}(\pdag{T})\hookrightarrow\sD^{\dag}_{\sP,\bQ}(\pdag{T'})$.

\vspace{4px}
\textbf{Tensor products.}
    Let $X$ be a realizable $k$-variety. Then we have the twisted tensor functor
    \[
        \widetilde{\otimes}\colon F\hyphen D^{\rb}_{\ovhol}(X/K)\times F\hyphen D^{\rb}_{\ovhol}(X/K)\longrightarrow
        F\hyphen D^{\rb}_{\ovhol}(X/K).
    \]
    If $f\colon X'\to X$ is a morphism of $k$-varieties, then we have an isomorphism
    $f^!( \text{ -- } \totimes \text{ -- } )\cong f^!(\text{ -- })\totimes f^!(\text{ -- })$.

    If $X$ is realized by a d-couple $(\sP, T)$, 
    then for two coherent $\sD^{\dag}_{\sP,\bQ}(\pdag{T})$-modules $\sM$ and $\sN$,
    the module $\sM\widetilde{\otimes}\sN$ is defined by $\sM\otimes^{\dag}_{\sO_{\sP}(\pdag{T})_{\bQ}}\sN[-\dim\sP]$.
    Here, $\sO_{\sP,\bQ}(\pdag{T})$ denotes $\sO_{\sP}(\pdag{T})\otimes\bQ$,
    where $\sO_{\sP}(\pdag{T})$ is the sheaf of functions on $\sP$ with
    overconvergent singularities along $T$ \cite[4.2.4]{BerthelotI},
    and $\otimes^{\dag}$ denotes the overconvergent tensor product.

    The tensor functor $-\otimes-\defeq \bD_X\big(\bD_X(-)\totimes\bD_X(-)\big)$ is also defined.

\vspace{4px}
\textbf{Exterior tensor product.}
    Let $X, Y$ be two realizable $k$-varieties.
    Then, we have the \emph{exterior tensor product}
    \[
        \boxtimes\colon F\hyphen D_{\ovhol}^{\rb}(X/K)\times F\hyphen D_{\ovhol}^{\rb}(Y/K) \longrightarrow
        F\hyphen D^{\rb}_{\ovhol}(X\times Y/K)
    \]
    defined by
    $\sM\boxtimes\sN\defeq \pr_1^{!}\sM\widetilde{\otimes}\pr_2^{!}\sN$,
    where $\pr_1\colon X\times Y\to X$ \resp{$\pr_2\colon X\times Y\to Y$} denotes the 
    first \resp{second} projection.

    The functor $-\boxtimes-$ is isomorphic to $\pr_1^{\plus}(-)\otimes\pr_2^{\plus}(-)$
    \cite[Proposition 1.3.3]{AbeCaro}.

    \label{para:boxtimesandpush}
    We also have the relative K\"unneth formula \cite[1.1.5]{Abe13Langlands}.
    Namely, if $f\colon X'\to X$, $g\colon Y'\to Y$ are morphisms of realizable $k$-varieties,
    then there exists a natural isomorphism $(f\times g)_{\plus}(\sM\boxtimes\sN)\cong(f_{\plus}\sM)\boxtimes (g_{\plus}\sN)$
    for each object $\sM$ of $F\hyphen D^{\rb}_{\ovhol}(X'/K)$ and $\sN$ of $F\hyphen D^{\rb}_{\ovhol}(Y'/K)$.
    By using the dual functor, we also get a natural isomorphism $(f\times g)_!(\sM\boxtimes\sN)\cong(f_!\sM)\boxtimes(g_!\sN)$.

\vspace{4px}
Projection formula is also available \cite[A.6]{AbeCaro}.
Namely, let $f\colon X'\to X$ be a morphism of realizable $k$-varieties.
Then, for each $\sM\in F\hyphen D^{\rb}_{\ovhol}(X'/K)$ and $\sN\in F\hyphen D^{\rb}_{\ovhol}(X/K)$, we have
natural isomorphisms $f_{\plus}(\sM\widetilde{\otimes}f^{!}\sN)\cong f_{\plus}(\sM)\widetilde{\otimes}\sN$.

\vspace{4px}
The base change theorem is also available;
assume that we are given the cartesian diagram
    \[
    \begin{tikzpicture}[description/.style={fill=white,inner sep=2pt}]
            \matrix (m) [matrix of math nodes, row sep=0.4em, column sep=1em, text height=1.5ex, text depth=0.25ex]
            {
                X_2' & & X_2 \\
                & \square & \\
                X_1' & & X_1 \\
            };
            \path[->,font=\scriptsize]
                (m-1-1) edge node[above] {$g'$} (m-1-3)
                (m-1-1) edge node[left] {$f'$} (m-3-1)
                (m-1-3) edge node[right] {$f$} (m-3-3)
                (m-3-1) edge node[below] {$g$} (m-3-3);
    \end{tikzpicture}
    \]
of realizable $k$-varieties. 
Then, we have natural isomorphisms $g^!\circ f_{\plus}\cong f'_{\plus}\circ g'^!$ and
$g^{\plus}\circ f_!\cong f'_!\circ g'^{\plus}$ of functors
$F\hyphen D^{\rb}_{\ovhol}(X_2/K)\to F\hyphen D^{\rb}_{\ovhol}(X_1'/K)$.
In fact, the first isomorphism is the base change theorem \cite[1.3.10]{AbeCaro},
    and the second one is derived from the first by using the dual functors.

    \vspace{4px}
\textbf{Scalar Extension.}
We introduce a functor of scalar extension.
Let $k'$ be a finite extension of $k$ with $q'=p^{s'}$ elements,
and put $V'\defeq V\otimes_{W(k)}W(k')$ and $K'\defeq\Frac(V')$.
Let $X$ be a realizable variety over $k$ and put $X'\defeq X\times_kk'$.
Under this setting, we have the ``changing the base field'' functor
\[
    \iota_{k'/k}\colon F\hyphen D^{\rb}_{\ovhol}(X/K)\to F\hyphen D^{\rb}_{\ovhol}(X'/K').
\]
(Note that a Frobenius structure on an object $\sN$ of $D^{\rb}_{\ovhol}(X'/K')$ is defined to be
an isomorphism $\sN\to(F^{(s')}_{X'})^{\ast}\sN$.)

For later use, we describe the construction of $\iota_{k'/k}$ 
assuming that $X$ is realized by a d-couple $(\sP,T)$ realizing $X$.
We first fix notations.
Put $\sP'\defeq\sP\times_{\Spf(V)}\Spf(V')$ and $T'\defeq T\times_kk'$,
and denote by $f\colon\sP'\to\sP$ the projection.
In this paragraph, we also denote $\sD^{\dag}_{\sP,\bQ}$ by $\sD^{\dag}_{\sP/V,\bQ}$
to make it explicit that it is defined for the morphism $\sP\to\Spf(V)$.

Now, take an object $(\sM,\Phi)$ of $F\hyphen D^{\rb}_{\ovhol}(X/K)$.
Let us put
\[
    \sM'\defeq\sD^{\dag}_{\sP'/V',\bQ}\otimes_{f^{-1}\sD^{\dag}_{\sP/V,\bQ}} f^{-1}\sM.
\]
Since $f^{\ast}\sD^{\dag}_{\sP/V,\bQ}\cong\sD^{\dag}_{\sP'/V',\bQ}$ \cite[2.2.2]{Berthelot02Ast},
$\sM'$ is isomorphic to $f^!\sM$ as objects of $D^{\rb}_{\ovhol}(X'/K)$
by definition of Berthelot's functor $f^!$.
Because of the presence of the Frobenius structure, $\sM'$ is in fact an object of $D^{\rb}_{\ovhol}(X'/K')$
\cite[1.2]{AbeCaro14}.
Moreover, by using the (semi-linear) morphism $\Phi'\colon \sM'\to (F_{X'}^{(s)})^{\ast}\sM'$ induced by $\Phi$,
we define an isomorphism $\Phi''\colon \sM'\to (F_{X'}^{(s')})^{\ast}\sM'$ in $D^{\rb}_{\ovhol}(X'/K')$ by
\[
    \Phi''\defeq (F_{X'}^{(s'-s)})^{\ast}\Phi'\circ(F_{X'}^{(s'-2s)})^{\ast}\Phi'\dots\circ(F_{X'}^{(s)})^{\ast}\Phi'\circ\Phi'.
\]
We define $\iota_{k'/k}\big((\sM,\Phi)\big)$ to be the pair $(\sM',\Phi'')$.
By the properties of six functors, the scalar extension functor $\iota_{k'/k}$ commutes with six functors.

\vspace{4pt}
\textbf{Frobenius trace function.}
Let $X$ be a realizable $k$-variety, let $(\sM, \Phi)$ be an object of $F\hyphen D^{\rb}_{\ovhol}(X/K)$
and let $k'$ be a finite extension of $k$.
In this paragraph, we make explicit the term ``Frobenius trace of $(\sM,\Phi)$ at a $k'$-valued point $x\in X(k')$''.
Let $i_x\colon\Spec(k')\hookrightarrow X'$ denote the morphism of $k'$-varieties induced by $x$.
Under the notation in the previous paragraph, put
$(\sM',\Phi')\defeq i_x^{\plus}\circ \rho_{k'/k}\big( (\sM,\Phi)\big)$;
it is thus an object of $F\hyphen D^{\rb}_{\ovhol}(\Spec(k')/K')$.
$\sM'$ is a complex of $K'$-vector spaces each of whose cohomology is finite-dimensional,
and the natural morphism $\sM'\to(F^{(s')})^{\ast}\sM'$ composed with $(\Phi')^{-1}\colon(F^{(s')})^{\ast}\sM'\to\sM'$ is a $K'$-linear isomorphism of $\sM'$,
which we denote by $F_{\sM,x}$.
Now, the Frobenius trace of $(\sM,\Phi)$ at $x\in X(k')$ is defined to be
$\sum_{i\in\bZ}(-1)^i\tr\big(F_{\sM,x}|\sH^i(\sM')\big)$.

\vspace{4pt}
\textbf{Overconvergent $F$-isocrystals.}
At last, we recall the relationship between the theory of isocrystals and that of arithmetic $\sD$-modules.
Let $X$ be a smooth scheme of dimension $d$ which is separated of finite type over $k$,
and assume that there exists a d-couple $(\sP, T)$ realizing $X$.
Then, there exists a fully faithful functor
\[
    \sp_{\plus}\colon F\hyphen\Isoc^{\dag}(X/K)\longrightarrow F\hyphen\Coh\big(\sD^{\dag}_{\sP,\bQ}(\pdag{T})\big),
\]
where the target denotes the category of coherent $\sD_{\sP,\bQ}(\pdag{T})$-modules with Frobenius structure.
The essential image of this functor is the coherent $\sD_{\sP,\bQ}(\pdag{T})$-modules
whose restriction to $\sP\setminus T$ is coherent as an $\sO_{\sP\setminus T,\bQ}$-module \cite[Th\'eor\`eme 2.2.12]{Caro06CM}.
We say that an object $F\hyphen\Coh\big(\sD_{\sP,\bQ}^{\dag}(\pdag{T})\big)$ is an overconvergent $F$-isocrystal
if it belongs to the essential image of $\sp_{\plus}$.

If $f\colon X\to Y$ is a morphism between smooth schemes separated of finite type,
and if we put $d\defeq \dim X-\dim Y$,
we have $\sp_{\plus}\circ f^{\ast}\cong f^{\plus}([-d])\circ\sp_{\plus}$ \cite[4.1.1]{Abe13Langlands}.
Moreover, if we $\sP$ is of constant dimension, then 
we have $\sp_{\plus}(-\otimes -)\cong \sp_{\plus}(-)\totimes\sp_{\plus}(-)[\dim\sP]$ \cite[3.1.8]{Caro15MM}.

\subsection{$\sD^{\dag}$-affinity.}

As in the classical case, when we discuss about the coherent $\sD^{\dag}$-modules,
it often suffices that we may just discuss on the global sections.
In this subsection, we recall a fundamental theorem about this point
and give two particular examples which we regularly use in this article.

\begin{theorem}[{\cite[5.3.3]{Huyghe98AIF}}]
    Let $\sP$ be a projective smooth formal scheme over $\Spf(V)$, and let $T$ be an ample divisor of
    the special fiber of $\sP$.
    Then, the functor $\Gamma(\sP, -)$ is exact and gives an equivalence of categories
    from the category of coherent $\sD^{\dag}_{\sP,\bQ}(\pdag{T})$-modules
    to the category of coherent $\Gamma\big(\sP, \sD^{\dag}_{\sP,\bQ}(\pdag{T})\big)$-modules.
    \label{thm:Daffine}
\end{theorem}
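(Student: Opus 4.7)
The plan is to follow Huyghe's strategy, establishing the result in two steps: exactness of $\Gamma(\sP,-)$ via a cohomology vanishing theorem, and the equivalence by constructing a localization functor as quasi-inverse.

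First, I would unwind the structure of $\sD^{\dag}_{\sP,\bQ}(\pdag{T})$ as an inductive limit. By Berthelot's construction this sheaf is a filtered colimit $\varinjlim_m \widehat{\sB}^{(m)}_{\sP,\bQ}\otimes_{\sO_{\sP}}\widehat{\sD}^{(m)}_{\sP,\bQ}$, where the coefficient sheaves $\widehat{\sB}^{(m)}$ encode divided-power overconvergent functions along $T$ and can, using the ampleness of $T$, be written in terms of powers of the associated (very) ample line bundle $\sL=\sO_{\sP}(T)$ (after replacing $T$ by a suitable multiple). This presentation is what makes Serre-type vanishing accessible in the arithmetic setting.

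Next, for the cohomology vanishing, I would show $H^i(\sP,\sM)=0$ for every $i>0$ and every coherent $\sD^{\dag}_{\sP,\bQ}(\pdag{T})$-module $\sM$. The strategy: write $\sM$ as a filtered colimit of coherent modules over the level-$m$ subrings; choose, for each level, a finite presentation by $\widehat{\sB}^{(m)}_{\sP,\bQ}\otimes\widehat{\sD}^{(m)}_{\sP,\bQ}$-modules whose terms are sums of twists $\sL^{\otimes r}\otimes\widehat{\sD}^{(m)}_{\sP,\bQ}$; apply Serre vanishing (in its $p$-adic formal/rigid incarnation after $-\otimes\bQ$) to kill the cohomology of such twists for $r$ large; and pass to the filtered colimit using that sheaf cohomology on a noetherian space commutes with $\varinjlim$. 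Exactness of $\Gamma(\sP,-)$ on short exact sequences of coherents is then immediate from the long exact sequence in cohomology.

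For the equivalence, the candidate quasi-inverse is the localization functor
\[
    \mathrm{Loc}(M) \defeq \sD^{\dag}_{\sP,\bQ}(\pdag{T}) \otimes_{\Gamma(\sP,\, \sD^{\dag}_{\sP,\bQ}(\pdag{T}))} M.
\]
I would verify that $\mathrm{Loc}$ carries coherent modules to coherent sheaves, and that the adjunction maps $\mathrm{Loc}\circ\Gamma\to\mathrm{id}$ and $\mathrm{id}\to\Gamma\circ\mathrm{Loc}$ are isomorphisms. The first is checked by taking a finite presentation of $\sM$ by free modules and applying exactness of $\Gamma$; the second is reduced via a finite presentation of $M$ on the algebra side, using again that $H^0$ is exact on coherents together with the fact that $\Gamma(\sP,\sD^{\dag}_{\sP,\bQ}(\pdag{T}))$ is a (left and right) coherent ring.

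The main obstacle will be the cohomology vanishing at each finite level $m$. One must produce, uniformly in $m$, resolutions by modules to which an ample-divisor Serre vanishing applies, and then argue that the $p$-adic completion, the passage to $\bQ$-coefficients, and the colimit over $m$ all preserve the vanishing. This is precisely Huyghe's ampleness criterion and requires carefully matching the arithmetic filtrations (level and order) with the geometric ampleness of $T$; once it is in hand, the $\sD^{\dag}$-affinity statement follows formally, in analogy with Beilinson--Bernstein-style arguments.
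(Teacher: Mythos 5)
This theorem is not proved in the paper: it is imported verbatim from Huyghe (\cite[5.3.3]{Huyghe98AIF}), so there is no internal argument to compare against. Your outline accurately reconstructs Huyghe's actual strategy --- the inductive-limit presentation of $\sD^{\dag}_{\sP,\bQ}(\pdag{T})$ via the level-$m$ algebras, Serre-type vanishing powered by the ampleness of $T$, and the localization functor as quasi-inverse in Beilinson--Bernstein style --- and you correctly identify that the genuine work lies in the uniform vanishing at each finite level and its compatibility with completion, $\bQ$-coefficients, and the colimit, which is the substance of Huyghe's paper rather than something recoverable by formal arguments alone.
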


We particularly apply this theorem to the following two d-couples:
$(\sP, T)=(\widehat{\bP^1_V}, \{\infty\})$ and $(\sP, T)=(\widehat{\bP^1_V}, \{0,\infty\})$.
In these cases, the ring of global sections of the sheaf $\sD^{\dag}_{\sP,\bQ}(\pdag{T})$ has an explicit description (cf. \cite[p. 915]{Huyghe98AIF}),
and we immediately get the following corollaries.

\begin{corollary}
\label{para:DoverA1}
    Let $A_1(K)^{\dag}$ be the ring defined by
    \[ 
        A_1(K)^{\dag} \defeq \Set{\sum_{l,k\in\bN}a_{l,k}x^l\partial^{[k]} | a_{l,k}\in K, \exists C>0, \exists \eta<1, |a_{l,k}|_p < C\eta^{l+k}}.
    \]
    Then, the functor $\Gamma(\widehat{\bP^1_V}, -)$ is exact and gives an equivalence of categories
    from the category of coherent $\sD_{\widehat{\bP^1_V},\bQ}^{\dag}(\pdag\{\infty\})$-modules to
    that of coherent $A_1(K)^{\dag}$-modules.
\end{corollary}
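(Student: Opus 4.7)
The plan is to apply Theorem~\ref{thm:Daffine} directly to the d-couple $(\widehat{\bP^1_V}, \{\infty\})$. First I would verify the hypotheses: $\widehat{\bP^1_V}$ is projective and smooth over $\Spf(V)$, and the divisor $\{\infty\}\subset\bP^1_k$ is ample, since any nonempty effective divisor on $\bP^1$ has positive degree and is therefore ample. Huyghe's theorem then immediately yields that $\Gamma(\widehat{\bP^1_V},-)$ is exact and gives an equivalence between the category of coherent $\sD^{\dag}_{\widehat{\bP^1_V},\bQ}(\pdag\{\infty\})$-modules and the category of coherent $\Gamma\bigl(\widehat{\bP^1_V}, \sD^{\dag}_{\widehat{\bP^1_V},\bQ}(\pdag\{\infty\})\bigr)$-modules.

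The remaining content is then to identify this global-sections ring with $A_1(K)^{\dag}$. On the affine chart $\widehat{\bA^1_V}=\widehat{\bP^1_V}\setminus\{\infty\}$ with coordinate $x$, the sheaf $\sD^{\dag}_{\widehat{\bP^1_V},\bQ}(\pdag\{\infty\})$ is freely generated as a left module by the divided-power operators $\partial^{[k]}$ over $\sO_{\widehat{\bP^1_V},\bQ}(\pdag\{\infty\})$, whose global sections are exactly the overconvergent functions $\sum_{l\in\bN}a_lx^l$ with $|a_l|_p<C\eta^l$ for some $C>0$, $\eta<1$. Combining this with the growth condition on the differential part gives a description of a global section as a formal sum $\sum_{l,k\in\bN}a_{l,k}x^l\partial^{[k]}$ satisfying $|a_{l,k}|_p<C\eta^{l+k}$ for some $C>0$ and $\eta<1$, which is precisely the ring $A_1(K)^{\dag}$. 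This computation is the content of the explicit description given in \cite[p.~915]{Huyghe98AIF}, which I would invoke directly rather than redo.

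There is no essential obstacle here: all the real work is absorbed into Theorem~\ref{thm:Daffine} and into Huyghe's local computation of the global sections. The step one must be a bit careful about is merely bookkeeping, namely checking that the bi-index bound $|a_{l,k}|_p<C\eta^{l+k}$ (with a single $\eta$ controlling both the overconvergent radius along $\infty$ and the $p$-adic growth of the divided-power differential coefficients) is the correct combined condition; this follows from the fact that the overconvergence condition of $\sD^{\dag}(\pdag\{\infty\})$ is indexed by a single radius, so both growth conditions may be absorbed into one constant $\eta<1$ at the cost of enlarging $C$.
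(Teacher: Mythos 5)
Your proposal matches the paper's argument exactly: the paper derives this corollary by applying Theorem~\ref{thm:Daffine} to the d-couple $(\widehat{\bP^1_V},\{\infty\})$, noting the divisor is ample, and citing Huyghe's explicit description of the global-sections ring on p.~915 of \cite{Huyghe98AIF}. There is nothing to add.
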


\begin{corollary}
\label{para:DoverGm}
    Let $B_1(K)^{\dag}$ be the ring defined by
    \[
        B_1(K)^{\dag} \defeq \Set{\sum_{l\in\bZ,k\in\bN}a_{l,k}x^l\partial^{[k]} | a_{l,k}\in K, \exists C>0, \exists \eta<1,
        |a_{|l|_{\infty},k}|_p < C\eta^{|l|_{\infty}+k}}.
    \]
    Here, $|l|_{\infty}$ denotes the Euclid norm.
    Then, the functor $\Gamma(\widehat{\bP^1_V}, -)$ is exact and gives an equivalence of categories
    from the category of coherent $\sD^{\dag}_{\widehat{\bP^1_V},\bQ}(\pdag\{0,\infty\})$-modules
    to that of coherent $B_1(K)^{\dag}$-modules.
\end{corollary}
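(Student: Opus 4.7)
The plan is to apply Theorem \ref{thm:Daffine} directly to the d-couple $(\sP,T)=(\widehat{\bP^1_V},\{0,\infty\})$ and then to identify the resulting global sections ring with $B_1(K)^{\dag}$.

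First I would verify the hypotheses of Theorem \ref{thm:Daffine}. The formal scheme $\widehat{\bP^1_V}$ is projective and smooth over $\Spf(V)$, and $T=\{0,\infty\}$ is a divisor of its special fiber $\bP^1_k$. Ampleness of $T$ is automatic, since any positive-degree effective divisor on $\bP^1$ is ample. Theorem \ref{thm:Daffine} therefore applies and gives both the exactness of $\Gamma(\widehat{\bP^1_V},-)$ on coherent $\sD^{\dag}_{\widehat{\bP^1_V},\bQ}(\pdag\{0,\infty\})$-modules and the equivalence of this category with the category of coherent modules over the ring $R\defeq\Gamma\big(\widehat{\bP^1_V},\sD^{\dag}_{\widehat{\bP^1_V},\bQ}(\pdag\{0,\infty\})\big)$.

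It remains to identify $R$ with $B_1(K)^{\dag}$. This is essentially a direct calculation using the explicit description of the sheaf: global sections of $\sO_{\widehat{\bP^1_V}}(\pdag\{0,\infty\})_{\bQ}$ are the overconvergent Laurent series $\sum_{l\in\bZ}a_lx^l$ with $|a_l|<C\eta^{|l|_\infty}$ for some $C>0$ and $\eta<1$, and adjoining the divided powers $\partial^{[k]}$ with their own overconvergence condition in $k$ yields exactly the double series defining $B_1(K)^{\dag}$. This parallels the computation for $(\widehat{\bP^1_V},\{\infty\})$ that gives $A_1(K)^{\dag}$ in Corollary \ref{para:DoverA1}, and is carried out in \cite[p.~915]{Huyghe98AIF}.

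The only delicate point is that the two overconvergence conditions — one governing the growth of $a_{l,k}$ in $|l|_\infty$ (coming from $\pdag\{0,\infty\}$) and the other governing the growth in $k$ (coming from the $\sD^{\dag}$-structure) — combine into a single joint estimate $|a_{l,k}|<C\eta^{|l|_\infty+k}$ with one pair $(C,\eta)$, rather than two independent pairs. This follows from Berthelot's filtered colimit presentation of $\sD^{\dag}_{\sP,\bQ}(\pdag T)$: both overconvergence conditions are indexed by the same level filtration, so that a standard reindexing allows a uniform choice of $\eta<1$. Once this bookkeeping is settled, the identification $R\cong B_1(K)^{\dag}$ is immediate and the corollary follows.
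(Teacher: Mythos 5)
Your proposal is correct and follows the same route as the paper: the paper likewise obtains the corollary by applying Theorem \ref{thm:Daffine} to the d-couple $(\widehat{\bP^1_V},\{0,\infty\})$ (the divisor being ample since it has positive degree on $\bP^1$) and then citing the explicit description of the global sections ring in \cite[p.~915]{Huyghe98AIF}. Your extra remarks on merging the two overconvergence estimates into a single pair $(C,\eta)$ are a reasonable elaboration of what the paper leaves implicit.
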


In the rest of this article, we implicitly use these equivalences and identify the source and the target.

\subsection{Examples of overholonomic arithmetic $\sD$-modules.}

Now, we introduce some specific examples of overholonomic $F$-$\sD^{\dag}$-modules.

\vspace{4px}
\textbf{Delta modules.}
    Let $X$ be an open subscheme of $\bA^1_k$,
    let $\lambda$ be an element of $k$ and let
    $\iota_{\lambda}\colon \{\lambda\}\hookrightarrow X$ denote the inclusion.
    We define an object $\delta_{\lambda}$ of $F\hyphen D^{\rb}_{\ovhol}(X/K)$, the ``Dirac delta module'' at $\lambda$.
    Note that the sheaf $\sO_{\{\lambda\},\bQ}\defeq\sO_{\{\lambda\}}\otimes\bQ$ can be seen as an object of $F\hyphen D^{\rb}_{\ovhol}(\{\lambda\}/K)$.
    We define $\delta_{\lambda}\defeq \iota_{\lambda,\plus}(\sO_{\{\lambda\},\bQ})$.
    If $\widetilde{\lambda}$ denotes the Teichm\"uller lift of $\lambda$, 
    this object is isomorphic to $A_1(K)^{\dag}/A_1(K)^{\dag}(x-\widetilde{\lambda})$
    as a coherent $A_1(K)^{\dag}$-module.

\vspace{4px}
\textbf{Dwork module associated with a non-trivial additive character.}
    Let $\psi$ be a non-trivial additive character on $k$
    and assume that $K$ contains a primitive $p$-th root of unity.
	We associate with $\psi$ an overholonomic $F\hyphen\sD^{\dag}_{\widehat{\bP^1_V},\bQ}(\pdag{\{\infty\}})$-module (therefore an object of $F\hyphen D^{\rb}_{\ovhol}(\bA^1_k)$) $\sL_{\psi}$,
	which we call ``the Dwork module associated with $\psi$''.
    Before we review its definition, let us recall the fact that $\psi$ defines an element $\pi_{\psi}$ of $K$ as follows \cite[(1.3)]{Berthelot84Ast}.

    Firstly, remark that the character $\psi$ takes value in $K$ because we assumed that
    $K$ has a primitive $p$-th root of unity.
    Now, if $k$ is a prime field, then $\pi_{\psi}$ is defined to be the unique root of $X^{p-1}+p$ that satisfies
    \[
        \psi(1)\equiv 1+\pi_{\psi} \pmod{\pi_{\psi}^2}.
    \]
    For general $k$, let $k_0$ be the prime field contained in $k$ and fix a nontrivial additive character $\psi_0$ on $\bF_p$.
    Then, there exists a unique element $a\in\bF_q$ that satisfies
    \[
        \psi(x) = \psi_0\big(\Tr_{k/k_0}(ax)\big)\quad \forall x\in k.
    \]
	The element $\pi_{\psi}$ is defined by $\pi_{\psi}=\pi_{\psi_0}\widetilde{a}$, where $\widetilde{a}$ is
    the Teichm\"uller lift of $a$ in $K$.

    Conversely, we may recover $\psi$ from $\pi_{\psi}$ as follows.
    In fact, the radius of convergence of the formal power series $\theta_{\psi}(z)=\exp\big(\pi_{\psi}(z-z^q)\big)$ is strictly
    greater that $1$, and for each $x\in\bF_q$, we have the equation $\theta_{\psi}(\widetilde{x})=\psi(x)$
    \cite[Lemme (1.4)]{Berthelot84Ast}.
    We note that an element $\pi$ of $K$ is of the form $\pi_{\psi}$ for some non-trivial additive character $\psi$ on $k$ if and only if $\pi^{q-1}=(-p)^{(q-1)/(p-1)}$. In fact, $\pi_{\psi}^{q-1}=(-p)^{(q-1)/(p-1)}$
    by the definition of $\pi_{\psi}$, and since different two non-trivial characters on $k$ give
    different $\pi_{\psi}$'s, each root of $X^{q-1}-(-p)^{(q-1)/(p-1)}$ must be of the form $\pi_{\psi}$.

    We are now ready for defining the Dwork module $\sL_{\psi}$ associated with the non-trivial additive character $\psi$.
    It is defined to be the coherent $\sD_{\widehat{\bP^1_V},\bQ}^{\dag}(\pdag{\{\infty\}})$-module
    \[
        \sL_{\psi}\defeq \sD_{\widehat{\bP^1_V},\bQ}^{\dag}(\pdag{\{\infty\}})/\sD_{\widehat{\bP^1_V},\bQ}^{\dag}(\pdag{\{\infty\}})(\partial+\pi_{\psi})
    \]
    with the Frobenius structure
    \begin{equation}
        \Phi\colon\sL_{\psi}\to (F^{(s)}_{\bP^1_k/k})^{\ast}\sL_{\psi};\quad e_{\psi}\mapsto \exp\big(-\pi_{\psi}(x-x^q)\big)\otimes e_{\psi};
        \label{eq:FrobeniusofDwork}
    \end{equation}
    here, $e_{\psi}$ denotes the global section of $\sL_{\psi}$ defined by the global section $1\in A_1(K)^{\dag}$
    and $x$ denotes the coordinate.
    We also denote $\sL_{\psi}$ by $\sL_{\pi_{\psi}}$ and call it the Dwork module associated with $\pi_{\psi}$.

    We recall some fundamental properties of the Dwork module.

    \begin{proposition}
        Let $\psi$ be a non-trivial additive character on $k$.
        
        \textup{(i)} Let $L_{\psi}$ be the Dwork $F$-isocrystal associated to $\psi$ \cite{Berthelot84Ast}.
        Then, ${\sp}_{\plus}L_{\psi}$ is isomorphic to $\sL_{\psi}$.

        \textup{(ii)} Let $k'$ be a finite extension of $k$.
        Then, the base extension $\iota_{k'/k}(\sL_{\psi})$ of $\sL_{\psi}$ is isomorphic to $\sL_{\psi\circ\Tr_{k'/k}}$.

        \textup{(iii)} Let $x$ be an element of $k$ and consider it as a $k$-valued point $i_{x}\colon \Spec(k)\hookrightarrow\bA_k^1$.
        Then, Frobenius trace of $\sL_{\psi}$ at $x$ is $-q\psi(x)$.
        \label{prop:propertiesofDwork}
    \end{proposition}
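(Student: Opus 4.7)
\textbf{For (i).} The approach is to unwind the definition of $\sp_{\plus}$ directly on $L_{\psi}$. The Dwork isocrystal on $\bA^1_k$ has underlying $\sO$-module of rank one with a generator $e$ on which the connection acts by $\nabla(e) = \pi_{\psi}\,e\otimes dx$ and the Frobenius by $F^{\ast}e = \theta_{\psi}^{-1}\,e$. Applying $\sp_{\plus}$ gives the coherent $\sD^{\dag}_{\widehat{\bP^1_V},\bQ}(\pdag{\{\infty\}})$-module generated by $e$ with relation $(\partial + \pi_{\psi})e = 0$, which is by definition $\sL_{\psi}$, and the Frobenius structure transfers directly to the formula (\ref{eq:FrobeniusofDwork}).

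\textbf{For (ii).} I would apply the explicit construction of $\iota_{k'/k}$ given earlier in the section. The underlying $\sD^{\dag}_{\widehat{\bP^1_{V'}},\bQ}(\pdag{\{\infty\}})$-module of $\iota_{k'/k}(\sL_{\psi})$ is $\sD^{\dag}_{\widehat{\bP^1_{V'}},\bQ}(\pdag{\{\infty\}})/\sD^{\dag}_{\widehat{\bP^1_{V'}},\bQ}(\pdag{\{\infty\}})(\partial+\pi_{\psi})$. The essential algebraic identity is $\pi_{\psi} = \pi_{\psi\circ\Tr_{k'/k}}$: writing $\psi(y) = \psi_0(\Tr_{k/k_0}(ay))$ for some $a\in\bF_q$, transitivity of trace gives $(\psi\circ\Tr_{k'/k})(y) = \psi_0(\Tr_{k'/k_0}(ay))$, so the distinguished element ``$a$'' remains the same when $k'$ is viewed as an extension of $k_0$. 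For the Frobenius, iterating (\ref{eq:FrobeniusofDwork}) gives $\Phi''(e_{\psi}) = \prod_{i=0}^{r-1}\exp(-\pi_{\psi}(x^{q^i} - x^{q^{i+1}})) \otimes e_{\psi}$, which telescopes to $\exp(-\pi_{\psi}(x - x^{q'}))\otimes e_{\psi}$ (with $r = [k':k]$, so $q^r = q'$), matching (\ref{eq:FrobeniusofDwork}) for $\sL_{\psi\circ\Tr_{k'/k}}$.

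\textbf{For (iii).} The plan is to use (i) to reduce to the classical Frobenius-trace formula for the Dwork $F$-isocrystal $L_{\psi}$, namely $\theta_{\psi}(\widetilde x) = \psi(x)$ at $x \in k$. Transferring this through $\sp_{\plus}$ via the compatibility $\sp_{\plus}\circ i_x^{\ast}\cong i_x^{\plus}[\dim\bA^1_k]\circ\sp_{\plus}$ places the nonzero cohomology of $i_x^{\plus}\sL_{\psi}$ in degree one, contributing the sign $(-1)$; the remaining factor $q$ comes from the Tate-twist convention relating Frobenius structures on $F$-isocrystals to those on arithmetic $F$-$\sD^{\dag}$-modules, producing $-q\psi(x)$.

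The main obstacle is (iii), where the subtlety lies in tracking carefully both the cohomological shift and the Tate-twist convention relating the Frobenius trace of an overconvergent $F$-isocrystal to that of its image under $\sp_{\plus}$. Parts (i) and (ii) are essentially bookkeeping: (i) is a direct unwinding of definitions, while (ii) combines the transitivity of trace with a telescoping iteration of the Frobenius structure.
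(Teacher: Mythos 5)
Your proposal is correct and follows essentially the same route as the paper: (ii) rests on the identity $\pi_{\psi}=\pi_{\psi\circ\Tr_{k'/k}}$ plus the telescoping of the iterated Frobenius, and (iii) reduces via the purity/shift isomorphism $i_x^{\plus}\sp_{\plus}(L_{\psi})\cong\sp_{\plus}(i_x^{\ast}L_{\psi})(1)[1]$ to the classical computation $\theta_{\psi}(\widetilde{x})=\psi(x)$, with the shift giving $-1$ and the twist giving $q$. The only quibble is a sign inconsistency in (i): with the relation $(\partial+\pi_{\psi})e=0$ the connection is $\nabla(e)=-\pi_{\psi}\,e\otimes dx$, not $+\pi_{\psi}\,e\otimes dx$; this does not affect the argument (the paper simply cites this comparison as a classical fact of Berthelot).
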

    \begin{proof}
        (i) It is a classical fact \cite[5.2]{Berthelot90}.

            (ii) Because $\iota_{k'/k}$ is exact and sends $\sD^{\dag}_{\widehat{\bP^1_V},\bQ}(\pdag{\{\infty\}})$ to $\sD^{\dag}_{\widehat{\bP^1_{V'}},\bQ}(\pdag{\{\infty\}})$,
            we have
            \[
                \iota_{k'/k}(\sL_{\psi})=\sD^{\dag}_{\widehat{\bP^1_{V'}}}(\pdag{\{\infty\}})/\sD^{\dag}_{\widehat{\bP^1_{V'}}}(\pdag{\{\infty\}})(\partial+\pi_{\psi})
            \]
            as a $\sD^{\dag}_{\widehat{\bP^1_{V'}}}(\pdag{\{\infty\}})$-module.
            Since $\pi_{\psi}=\pi_{\psi\circ\Tr_{k'/k}}$, it is the same object as $\sL_{\psi\circ\Tr_{k'/k}}$.
            The Frobenius structures are also compatible because $\theta_{\psi\circ\Tr_{k'/k}}(z)=\prod_{i=1}^{s'-s}\theta_{\psi}(z^{q^{i-1}})$.

        (iii) 
        Because of the isomorphism $i_x^{\plus}\sp_{\plus}(L_{\psi})\cong \sp_{\plus}(i_x^{\ast}L_{\psi})(1)[1]$,
        it suffices to prove that the Frobenius trace of $L_{\psi}$ at $x$ equals $\psi(x)$.
        Since $L_{\psi}$ is an isocrystal of rank one,
		$i_x^{\ast}L_{\psi}$ is a one-dimensional $K$-vector space.
        If $x\neq 0$, let $\widetilde{x}$ denote the Teichm\"uller lift of $x$;
        then the Frobenius structure is defined by
        \[
            i_x^{\ast}L_{\psi}\to i_x^{\ast}L_{\psi};
        \quad e\mapsto \exp\big(\pi_{\psi}(\widetilde{x}-\widetilde{x}^q)\big)e=\theta_{\psi}(\widetilde{x})e=\psi(x)e,
        \]
        which shows the claim.
        A similar calculation shows the claim in the case where $x=0$.
    \end{proof}

\vspace{4px}
\textbf{Kummer module associated with a multiplicative character.}
    Let $\alpha$ be an element of $\frac{1}{q-1}\bZ$.
	We associate with $\alpha$ an overholonomic $F\hyphen\sD^{\dag}_{\widehat{\bP^1}_V,\bQ}(\pdag{\{0,\infty\}})$-module (therefore an object of $F\hyphen D^{\rb}_{\ovhol}(\bG_{\rmm,k})$) $\sK_{\alpha}$, which we call
	``the Kummer module associated with $\alpha$'', in the following way.
	We define $\sK_{\alpha}$ to be $B_1(K)^{\dag}/B_1(K)^{\dag}(x\partial-\alpha)$ (considered as
    a coherent $\sD^{\dag}_{\bP^1,\bQ}(\pdag\{0,\infty\})$-module)
    with the Frobenius structure
    \begin{equation}
        \sK_{\alpha}\to (F_{\bP^1_k/k}^{(s)})^{\ast}\sK_{\alpha}; \quad e_{\alpha}\mapsto x^{-\alpha(q-1)}\otimes e_{\alpha};
        \label{eq:FrobeniusofKummer}
    \end{equation}
    here, $e_{\alpha}$ denotes the global section of $\sK_{\alpha}$ defined by the element $1\in B_1(K)^{\dag}$.

	As we did in the case of Dwork modules, we may consider the Kummer modules as being associated with characters.
	Let $\chi$ be a (multiplicative) character on $k^{\times}$.
    Let $\alpha_{\chi}$ be an element of $\frac{1}{q-1}\bZ$ satisfying $\chi(\xi)=\widetilde{\xi}^{(q-1)\alpha_{\chi}}$. 
    (This condition does not uniquely determine $\alpha_{\chi}\in\frac{1}{q-1}\bZ$, but
    $\alpha_{\chi} \bmod \bZ$ is a uniquely determined element of $\frac{1}{q-1}\bZ/\bZ$.)
    $\sK_{\alpha_{\chi}}$ is also denoted by $\sK_{\chi}$; this does not depend on the choice of $\alpha_{\chi}$
    up to isomorphism because $\sK_{\alpha}\cong\sK_{\alpha+1}$ for any $\alpha\in\frac{1}{q-1}\bZ$.

    The following properties, which correspond to those for Dwork modules (Proposition \ref{prop:propertiesofDwork}), are also available.
    The proof is also parallel to that of Proposition \ref{prop:propertiesofDwork} and we omit it.
        
    \begin{proposition}
        Let $\chi$ be a character on $k^{\times}$.
        
        \textup{(i)} Let $K_{\alpha}$ be the Kummer convergent $F$-isocrystal on $\bG_{\rmm,k}$ overconvergent along $\{0,\infty\}$ defined by $\alpha$.
        Then, $\sp_{\plus}K_{\alpha}$ is isomorphic to $\sK_{\chi}$.

        \textup{(ii)} Let $k'$ a finite extension of $k$.
        Then, the base extension $\iota_{k'/k}(\sK_{\psi})$ of $\sK_{\psi}$ is isomorphic to $\sK_{\psi\circ\Norm_{k'/k}}$.

        \textup{(iii)} Let $x$ be an element of $k'^{\times}$ and consider it as a $k$-valued point $i_x\colon\{x\}\hookrightarrow\bG_{\rmm,k}$.
        Then, the Frobenius trace of $\sK_{\chi}$ at $x$ is $-q\chi(x)$.
        \label{prop:propertiesofKummer}
    \end{proposition}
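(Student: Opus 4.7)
The plan is to mirror the proof of Proposition \ref{prop:propertiesofDwork} step by step, replacing the additive data $(\pi_{\psi}, \theta_{\psi})$ with the multiplicative data $(\alpha, x^{-\alpha(q-1)})$.

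For (i), the Kummer convergent $F$-isocrystal $K_\alpha$ on $\bG_{\rmm,k}$ overconvergent along $\{0,\infty\}$ is by construction the rank one object defined by the differential equation $(x\partial - \alpha)f = 0$ with Frobenius $f \mapsto x^{-\alpha(q-1)} f$. Since $\sp_{\plus}$ sends a realized $F$-isocrystal to the coherent $\sD^{\dag}_{\widehat{\bP^1_V},\bQ}(\pdag{\{0,\infty\}})$-module with the same presentation and Frobenius structure, the identification $\sp_{\plus} K_\alpha \cong \sK_\chi$ follows at once from the definitions.

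For (ii), the functor $\iota_{k'/k}$ is exact and sends $\sD^{\dag}_{\widehat{\bP^1_V},\bQ}(\pdag{\{0,\infty\}})$ to $\sD^{\dag}_{\widehat{\bP^1_{V'}},\bQ}(\pdag{\{0,\infty\}})$, so the underlying module of $\iota_{k'/k}(\sK_\chi)$ is $B_1(K')^{\dag}/B_1(K')^{\dag}(x\partial - \alpha)$. Iterating $\Phi$ a total of $s'/s$ times as in the formula defining $\Phi''$, a direct computation yields
\[
    e_\alpha \longmapsto \Bigl(\prod_{i=0}^{s'/s - 1} x^{-\alpha q^i (q-1)}\Bigr) \otimes e_\alpha = x^{-\alpha(q'-1)} \otimes e_\alpha,
\]
which is precisely the Frobenius structure defining the Kummer module over $k'$ with parameter $\alpha$. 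Since $\Norm_{k'/k}(\xi) = \xi^{(q'-1)/(q-1)}$, we have $(\chi \circ \Norm_{k'/k})(\xi) = \widetilde{\xi}^{(q'-1)\alpha_\chi}$, so the same $\alpha$ is a valid parameter for $\chi \circ \Norm_{k'/k}$, whence $\iota_{k'/k}(\sK_\chi) \cong \sK_{\chi \circ \Norm_{k'/k}}$.

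For (iii), the isomorphism $i_x^{\plus} \sp_{\plus}(K_\alpha) \cong \sp_{\plus}(i_x^\ast K_\alpha)(1)[1]$ (used identically in the Dwork case) accounts for the factor $-q$ via the Tate twist and shift, and it remains to show that the Frobenius trace of the isocrystal $K_\alpha$ at $x$ equals $\chi(x)$. When $x\in k^\times$ with Teichm\"uller lift $\widetilde{x}$, the fiber $i_x^\ast K_\alpha$ is a one-dimensional $K$-vector space on which the Frobenius acts on a generator $e$ by $e \mapsto \widetilde{x}^{\alpha(q-1)} e$, and by the defining relation $\chi(\xi) = \widetilde{\xi}^{(q-1)\alpha_\chi}$ this equals $\chi(x) e$. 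For a general $k'$-valued point, one first applies (ii) to reduce to the analogous calculation over $k'$. The only non-routine point I anticipate is tracking the sign and inversion conventions carefully enough so that the constant lands exactly on $-q\chi(x)$; once this is aligned, all three parts are direct adaptations of the corresponding steps on the Dwork side.
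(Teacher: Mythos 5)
Your proposal is correct and follows exactly the route the paper intends: the paper omits this proof, stating only that it is "parallel to that of Proposition \ref{prop:propertiesofDwork}", and your argument is precisely that parallel — (i) from the definitions of $K_\alpha$ and $\sp_{\plus}$, (ii) from exactness of $\iota_{k'/k}$ plus the telescoping product $\prod_{i}x^{-\alpha q^i(q-1)}=x^{-\alpha(q'-1)}$ playing the role of $\theta_{\psi\circ\Tr_{k'/k}}(z)=\prod_i\theta_{\psi}(z^{q^{i-1}})$, and (iii) from the purity isomorphism $i_x^{\plus}\sp_{\plus}(K_\alpha)\cong\sp_{\plus}(i_x^{\ast}K_\alpha)(1)[1]$ together with the fiberwise computation $\widetilde{x}^{\alpha(q-1)}=\chi(x)$. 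No gaps.
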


\subsection{Fourier transform.}

Let $\psi$ be a non-trivial additive character on $k$,
and we fix it throughout this subsection.
We assume that $K$ contains a primitive $p$-th root of unity.
Recall from the previous subsection that then an element $\pi_{\psi}$ of $K$ is associated with $\psi$
and that we have the Dwork module $\sL_{\psi}$ in $F\hyphen D^{\rb}_{\ovhol}(\bA^1_k/K)$.

The purpose of this subsection is recalling the theory of Fourier transforms of arithmetic $\sD$-modules, which is closely related to the theory of multiplicative convolution.
The basic references are articles of Huyghe \cite{NootHuyghe04, NootHuyghe13}.
Her theory of $p$-adic Fourier transforms is the one with respect to the character $\psi$
which can be expressed by $\psi_0\circ\Tr_{k/k_0}$ (in other words, for which $\pi_{\psi}$ is a root of $X^{p-1}+p$).
However, her argument remains valid for general $\psi$,
we use the theory of $p$-adic Fourier transform for a general $\psi$.

First, we define the ``integral kernel'' of the arithmetic Fourier transform.
Let $\mu\colon \bA^1_k\times\bA^1_k\to\bA^1_k$ be the multiplication morphism $(x,y)\mapsto xy$,
and define $\sL_{\mu,\psi}\defeq \mu^!(\sL_{\psi}[-1])$.
Then, $\sL_{\mu,\psi}$ is an object of $F\hyphen D^{\rb}_{\ovhol}(\bA^1_k\times\bA^1_k/K)$
concentrated on degree $0$.
Let $(\sP', T')$ denote the d-couple defined by $\sP'=\widehat{\bP^1_V}\times\widehat{\bP^1_V}$ and
$T'=(\{\infty\}\times\bP^1_k)\cup(\bP^1_k\times\{\infty\})$.
We also regard $\sL_{\mu,\psi}$ as an object of $D^{\rb}_{\coh}\big(\sD^{\dag}_{\sP',\bQ}(\pdag{T'})\big)$.

\begin{definition}
    The functor
    \[
        \FT_{\psi}\colon D^{\rb}_{\coh}\big(\sD^{\dag}_{\widehat{\bP^1_V},\bQ}(\pdag{\{\infty\}})\big)
        \longrightarrow D^{\rb}_{\coh}\big(\sD^{\dag}_{\widehat{\bP^1_V},\bQ}(\pdag{\{\infty\}})\big)
    \]
    is defined by sending $\sM$ in $D^{\rb}_{\coh}\big(\sD^{\dag}_{\widehat{\bP^1_V},\bQ}(\pdag{\{\infty\}})\big)$ to
    \[
        \FT_{\psi}(\sM) = \widetilde{\pr_2}_{,\plus}\big(\sL_{\mu,\psi}\otimes^{\dag}_{\sO_{\sP',\bQ}(\pdag{T'})}\widetilde{\pr_1}^!\sM\big)[-2],
    \]
    where $\widetilde{\pr_i}\colon (\sP',T')\to(\widehat{\bP^1_V},\{\infty\})$ denotes the smooth morphism of d-couples
    defined by the $i$-th projection $\widehat{\bP^1_V}\times\widehat{\bP^1_V}\to\widehat{\bP^1_V}$ for $i\in\{1,2\}$.
    This object $\FT_\psi(\sM)$ is called the \emph{geometric Fourier transform} of $\sM$.

    We also define the geometric Fourier transform on the category
    $F\hyphen D^{\rb}_{\ovhol}(\bA^1_k/K)$. Namely, we define
    \[
        \FT_{\psi}\colon F\hyphen D^{\rb}_{\ovhol}(\bA^1_k/K)\to F\hyphen D^{\rb}_{\ovhol}(\bA^1_k/K);\quad
        \sM\mapsto\pr_{2,+}\big(\sL_{\mu,\psi}\totimes\pr_1^!\sM\big),
    \]
    where $\pr_i$'s are $i$-th projections $\bA_k^1\times\bA_k^1\to\bA^1_k$.
\end{definition}

These two functors are compatible with each other because it is true for each functor appearing in the definitions. 

In defining the geometric Fourier transform,
we could have used $\pr_{2,!}$ instead of $\pr_{2,\plus}$,
and Huyghe proved that these two ``geometric Fourier transforms'' coincide.
The following proposition is a special case of her result.

\begin{proposition}[{\cite[Theorem 3.2]{NootHuyghe13}}]
    Let $\sM$ be an object of $F\hyphen D_{\ovhol}^{\rb}(\bA^1/K)$.
    Then, the natural morphism
    \[
        \pr_{2,!}\big(\sL_{\mu,\psi}\totimes\pr_1^!\sM\big) \longrightarrow \FT_{\psi}(\sM)
    \]
    is an isomorphism.
    \label{prop:miracleofft}
\end{proposition}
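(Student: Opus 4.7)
The plan is to show that the cone of the canonical morphism
\[
    \pr_{2,!}\bigl(\sL_{\mu,\psi}\totimes\pr_1^!\sM\bigr) \longrightarrow \pr_{2,\plus}\bigl(\sL_{\mu,\psi}\totimes\pr_1^!\sM\bigr)
\]
vanishes. The first step is to reduce to a boundary computation. Factor $\pr_2$ as an open immersion $j\colon \bA^1_k\times\bA^1_k\hookrightarrow\bP^1_k\times\bA^1_k$ followed by the proper second projection $\bar\pr_2\colon \bP^1_k\times\bA^1_k\to\bA^1_k$. Since $\bar\pr_2$ is proper, $\bar\pr_{2,!}\cong\bar\pr_{2,\plus}$, and the displayed cone equals $\bar\pr_{2,\plus}$ applied to the cone of $j_!\to j_\plus$ on $\sL_{\mu,\psi}\totimes\pr_1^!\sM$. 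The latter cone is supported on the boundary divisor $\{\infty\}\times\bA^1_k$, so it suffices to prove that this boundary cone vanishes.

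The second step is a local analysis of $\sL_{\mu,\psi}=\mu^!(\sL_\psi[-1])$ near the boundary. By base change along a closed point $i_y\colon\{y\}\hookrightarrow\bA^1_k$ with $y\neq 0$, the restriction of $\sL_{\mu,\psi}$ to the fiber becomes, up to a shift, the twisted Dwork module defined by the operator $\partial+\pi_\psi y$; this module has totally irregular singularity at $\infty$, and a direct calculation using its explicit presentation and the overconvergence of $\theta_\psi$ shows that $j_!\to j_\plus$ applied to it is an isomorphism near $\infty$. This is the $p$-adic incarnation of the $\ell$-adic fact that cohomology and compactly-supported cohomology of $\bA^1$ agree after twisting by a totally wild rank-one local system, and it is the reason the two Fourier transforms coincide. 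Combining this with projection formula and base change along the strata of $\{\infty\}\times\bA^1_k$ disposes of the open locus $y\neq 0$.

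The main obstacle will be propagating this fiberwise vanishing uniformly along the full divisor $\{\infty\}\times\bA^1_k$, and in particular handling the point $y=0$, where the twist degenerates to the trivial module and where $\mu$ acquires an indeterminacy when extended to $\bP^1_V\times\bP^1_V$. Here one needs an explicit presentation of $\sL_{\mu,\psi}$ as a $\sD^\dag$-module on a formal neighborhood of $(\infty,0)$ (possibly after blowing it up), together with the overholonomicity of $\sM$ in order to control $\pr_1^!\sM$ along the boundary through its coherence properties. This uniform $p$-adic computation is the technical heart of Huyghe's argument in \cite[Theorem 3.2]{NootHuyghe13}, and I would follow it; the overall strategy of reducing to a boundary cone and exploiting the irregularity of $\sL_\psi$ parallels Laumon's stationary-phase proof in the $\ell$-adic setting.
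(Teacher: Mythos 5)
The paper does not prove this proposition at all: it is imported verbatim as a special case of Huyghe's Theorem 3.2 in [NH2], as the bracketed citation and the Remark immediately following it (which notes that Huyghe's actual theorem is stronger, holding for all coherent complexes) make explicit. Your proposal, for all its scaffolding, ends in the same place --- you correctly identify the uniform boundary computation along $\{\infty\}\times\bA^1_k$ and at $(\infty,0)$ as ``the technical heart of Huyghe's argument'' and then defer to it. So relative to the paper there is no gap: both treatments take the statement as an external input, and your outline of the reduction to a boundary cone and the role of the irregularity of $\sL_\psi$ is a fair description of why the result should hold (and of its $\ell$-adic ancestor, Laumon's stationary phase).

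One caution about the sketch itself, so that you do not mistake it for more than a heuristic. The cone of $\pr_{2,!}\to\pr_{2,\plus}$ cannot be shown to vanish fiber by fiber in the $y$-variable: base change identifies $i_y^{\plus}\circ\pr_{2,!}$ with $a_!$ of the restriction to the fiber, but there is no corresponding base-change isomorphism for $i_y^{\plus}\circ\pr_{2,\plus}$, so the $\plus$-restriction of the cone to a point $y$ is not the cone of $a_!\to a_\plus$ on the fiber. In particular your ``twisted Dwork module over $y\neq 0$'' computation does not by itself dispose of the open locus, and over $y=0$ the twist is trivial, so a literal fiberwise reading would compare $a_!\sM$ with $a_\plus\sM$, which differ in general. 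The cancellation is genuinely global along the divisor $\{\infty\}\times\bA^1_k$, which is exactly why Huyghe's explicit $\sD^{\dag}$-module computation (or a full stationary-phase formalism) is needed and why neither you nor the paper can avoid citing it.
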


\begin{remark}
    The original form \cite[Theorem 3.2]{NootHuyghe13} of this proposition by Huyghe is quite stronger.
    It states that for all objects of $D_{\coh}^{\rb}\big(\sD^{\dag}_{\widehat{\bP^1_V},\bQ}(\pdag\{\infty\})\big)$,
    the source of this morphism is well-defined, that so is the morphism itself, and that it is an isomorphism.
\end{remark}

Under the identification in \ref{para:DoverA1}, the geometric Fourier transform of an overholonomic $\sD^{\dag}_{\widehat{\bP^1_V},\bQ}(\pdag\{\infty\})$-module
is explicitly described as follows.

\begin{proposition}[{\cite[Th\'eor\`eme 5.3.1]{NootHuyghe04}}]
    \label{prop:geomftandnaiveft}
    Let $\varphi_{\pi_{\psi}}\colon A_1(K)^{\dag}\to A_1(K)^{\dag}$ be the ring automorphism defined by $\varphi_{\pi_{\psi}}(x)=-\partial/\pi_{\psi}$ and $\varphi_{\pi_{\psi}}(\partial)=\pi_{\psi} x$.
    Let $\sM$ be a coherent $A_1(K)^{\dag}$-module
    and denote by $\varphi_{\pi_{\psi},\ast}\sM$ the coherent $A_1(K)^{\dag}$-module obtained by
    letting $A_1(K)^{\dag}$ act on $\sM$ via $\varphi_{\pi_{\psi}}$.
    Then, we have a natural isomorphism $\FT_{\psi}(\sM) \cong \varphi_{\pi_{\psi},\ast}\sM[-1].$
    \label{prop:globalsectionofft}
\end{proposition}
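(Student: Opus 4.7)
The plan is to carry out the computation of $\FT_\psi(\sM)$ step by step from the definition, reducing by $\sD^\dag$-affinity (Corollary \ref{para:DoverA1}) to manipulations of modules over $A_1(K)^\dag$ and its two-variable analogue $A_2(K)^\dag$ (with coordinates $x,y$ and derivations $\partial_x,\partial_y$).

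The first step is an explicit description of the integral kernel. Applying the chain rule to the defining relation $(\partial+\pi_{\psi})e_\psi=0$ of $\sL_\psi$ under the multiplication morphism $\mu(x,y)=xy$, one identifies
\[
    \sL_{\mu,\psi}\cong A_2(K)^\dag\big/\bigl(A_2(K)^\dag(\partial_x+\pi_{\psi}y)+A_2(K)^\dag(\partial_y+\pi_{\psi}x)\bigr),
\]
which is the $p$-adic incarnation of the kernel $\exp(-\pi_{\psi}xy)$. Similarly, $\widetilde{\pr_1}^!\sM$ identifies (up to the appropriate shift) with the base change $A_2(K)^\dag\otimes_{A_1(K)^\dag}\sM$ along the inclusion in the $x$-variable.

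The tensor product $N\defeq\sL_{\mu,\psi}\totimes\widetilde{\pr_1}^!\sM$ is then an $A_2(K)^\dag$-module generated by symbols $e\otimes m$ ($m\in\sM$) subject to the Leibniz relations
\[
    \partial_x(e\otimes m)=-\pi_{\psi}y\,(e\otimes m)+e\otimes\partial m,\quad\partial_y(e\otimes m)=-\pi_{\psi}x\,(e\otimes m).
\]
The pushforward $\widetilde{\pr_2}_{,\plus}N$ is computed by the relative de Rham complex $[N\xrightarrow{\partial_x}N]$ (appropriately shifted). In the cokernel of $\partial_x$ the first relation becomes $\pi_{\psi}y\,(e\otimes m)\equiv e\otimes\partial m$, while the second still reads $\partial_y(e\otimes m)=-\pi_{\psi}x\,(e\otimes m)$; under the map $e\otimes m\mapsto m$, the induced $A_1(K)^\dag$-action on the target (with $y,\partial_y$ in the role of the new coordinate and its derivation) is precisely the action on $\sM$ twisted by $\varphi_{\pi_{\psi}}$. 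Combined with the overall shift $[-2]$ in the definition of $\FT_\psi$ this yields $\varphi_{\pi_{\psi},\ast}\sM[-1]$.

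The main obstacle is verifying that the relative de Rham complex $[N\xrightarrow{\partial_x}N]$ has its cohomology concentrated in a single degree, equivalently that $\partial_x$ is injective on $N$. This is not a formal consequence of the presentation: one has to invert $\partial_x$ termwise on explicit power-series representatives while remaining inside the weakly completed ring $A_2(K)^\dag$, and the success of this inversion depends delicately on combining the $\dag$-growth condition with the $p$-adic cancellation provided by the exponential kernel. This is the point where Huyghe's analytic estimates on the weak completion are indispensable; once $\ker(\partial_x)=0$ is granted, the identification of the cokernel with $\varphi_{\pi_{\psi},\ast}\sM$ reduces to the generators-and-relations manipulation above.
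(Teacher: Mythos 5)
First, a point of reference: the paper does not prove this proposition at all --- it is quoted verbatim from Noot-Huyghe \cite{NootHuyghe04}, Th\'eor\`eme 5.3.1 --- so your sketch cannot be compared with an argument in the text; it can only be judged on internal consistency, and there it has two concrete problems. The first is a sign that you assert away rather than verify. From the kernel relations you (correctly, given the paper's normalization $\sL_{\psi}=\sD^{\dag}/\sD^{\dag}(\partial+\pi_{\psi})$, i.e.\ kernel ``$\exp(-\pi_{\psi}xy)$'') derive in the cokernel of $\partial_x$, namely $\pi_{\psi}y(e\otimes m)\equiv e\otimes\partial m$ and $\partial_y(e\otimes m)\equiv-\pi_{\psi}x(e\otimes m)$, the induced action on $\sM$ is $y\mapsto+\partial/\pi_{\psi}$ and $\partial_y\mapsto-\pi_{\psi}x$. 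That is \emph{not} $\varphi_{\pi_{\psi}}$ (which sends $x\mapsto-\partial/\pi_{\psi}$, $\partial\mapsto\pi_{\psi}x$) but its composite with the involution $x\mapsto-x$, $\partial\mapsto-\partial$. A sanity check makes the discrepancy visible: your relations send $\delta_{1}=A_1(K)^{\dag}/A_1(K)^{\dag}(x-1)$ to $A_1(K)^{\dag}/A_1(K)^{\dag}(\partial+\pi_{\psi})=\sL_{\psi}$ (up to shift), as the kernel $\exp(-\pi_{\psi}xy)$ demands, whereas the stated $\varphi_{\pi_{\psi}}$ sends it to $\sL_{\psi^{-1}}$. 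You must either locate a compensating sign (e.g.\ in Huyghe's own normalization of the Dwork kernel, which may differ from this paper's) or conclude that the twist is by the other automorphism; this cannot be left implicit, because Theorem \ref{thm:hypandconv} uses $\varphi_{\pi_{\psi}}(x)=-\partial/\pi_{\psi}$ literally and the signs $(-1)^{m+np}$ in $\Hyp_{\pi}$ hinge on it.

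The second gap is the passage from free to coherent modules. For a general coherent $\sM$ there are no ``explicit power-series representatives'' on which to invert $\partial_x$ termwise: the object $N$ is built from an abstract module. The concentration of $[N\xrightarrow{\partial_x}N]$ in a single degree must first be proved for $\sM=A_1(K)^{\dag}$ itself, where $N$ is an explicit cyclic module over the two-variable weak completion and the claim becomes the injectivity and identification of the cokernel of a twisted operator of the form $\partial_x+\pi_{\psi}y$ --- this is where the $\dag$-growth estimates genuinely enter --- and then propagated to arbitrary coherent $\sM$ by choosing a finite presentation and invoking the right-exactness of the top cohomology of $\FT_{\psi}$ together with the vanishing in other degrees already established for free modules (five lemma). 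This reduction is exactly how Huyghe organizes the proof, and it is the structural step your sketch omits; with it and the sign question settled, your outline is the standard route.
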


\section{Multiplicative Convolution of Arithmetic $\sD$-modules.}

In this section, we construct a theory of multiplicative convolutions of arithmetic $\sD$-modules.
The contents of this section is basically a direct translation of the classical arguments
for the convolution of complex $\sD$-modules \cite[Chapter 5]{Katz90ESDE} and for
$\ell$-adic perverse sheaves \cite[Chapter 7]{Katz90ESDE} to the $p$-adic setting.

\subsection{Definition and basic properties.}

\begin{definition}
    Let $\sM, \sN$ be objects of $F$-$D^{\rb}_{\ovhol}(\bG_{\rmm,k}/K)$.
    Then, we define two \emph{convolutions}, $\sM\ast_{!}\sN$ and $\sM\ast_{\plus}\sN$, by
    \[
        \sM\ast_{!}\sN\defeq\mu_{!}\big(\sM\boxtimes\sN\big),\quad \sM\ast_{\plus}\sN\defeq \mu_{\plus}\big(\sM\boxtimes\sN\big),
    \]
    where $\mu\colon\bG_{\rmm,k}\times\bG_{\rmm,k}\to\bG_{\rmm,k}$ denotes the multiplication morphism.
\end{definition}

We here state some basic properties of these convolutions.

\begin{proposition}
    Let $\ast$ be the convolution $\ast_{!}$ or $\ast_{\plus}$.
    Then, the following properties hold.
    \begin{enumerate}
        \item[\textup{(i)}] The delta module $\delta_1$ is the unit of the convolution,
            that is, we have natural isomorphisms
            $\delta_1\ast \sM\cong\sM$ and $\sM\ast\delta_1\cong\sM$ for all
            $\sM\in F\hyphen D^{\rb}_{\ovhol}(\bG_{\rmm,k}/K)$.
        \item[\textup{(ii)}] The convolution is commutative, that is, 
            we have a natural isomorphism $\sM_1\ast\sM_2\cong\sM_2\ast\sM_1$
            for all $\sM_1,\sM_2\in F\hyphen D^{\rb}_{\ovhol}(\bG_{\rmm,k}/K)$.
        \item[\textup{(iii)}] The convolution is associative, that is,
            we have a natural isomorphism $(\sM_1\ast\sM_2)\ast\sM_3\cong\sM_1\ast(\sM_2\ast\sM_3)$
            for all $\sM_1,\sM_2,\sM_3\in F\hyphen D^{\rb}_{\ovhol}(\bG_{\rmm,k}/K)$.
    \end{enumerate}
\end{proposition}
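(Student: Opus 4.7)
All three properties follow formally from the six-functor formalism summarized in Section 1, applied uniformly to either variant of the convolution; I will work with $\ast_{\plus}$, the case of $\ast_{!}$ being strictly parallel and also deducible from it via the duality functor $\bD_{\bG_{\rmm,k}}$ (which exchanges $\mu_{\plus}$ and $\mu_{!}$ and is involutive). The three main ingredients are the relative K\"unneth formula recalled in \ref{para:boxtimesandpush}, the base change theorem, and associativity of the exterior tensor product.

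For (i), I unfold $\delta_1=\iota_{1,\plus}\sO_{\{1\},\bQ}$ and apply the relative K\"unneth formula to the morphism $\iota_1\times\id\colon\{1\}\times\bG_{\rmm,k}\to\bG_{\rmm,k}\times\bG_{\rmm,k}$, rewriting
\[
    \delta_1\ast_{\plus}\sM \;\cong\; \mu_{\plus}(\iota_1\times\id)_{\plus}(\sO_{\{1\},\bQ}\boxtimes\sM)\;\cong\;\bigl(\mu\circ(\iota_1\times\id)\bigr)_{\plus}(\sO_{\{1\},\bQ}\boxtimes\sM).
\]
The composite $\mu\circ(\iota_1\times\id)\colon\{1\}\times\bG_{\rmm,k}\to\bG_{\rmm,k}$ is an isomorphism, along which $\sO_{\{1\},\bQ}\boxtimes\sM$ is naturally identified with $\sM$; hence the right-hand side is canonically $\sM$. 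The isomorphism $\sM\ast_{\plus}\delta_1\cong\sM$ follows symmetrically (or, once proved, from (ii)).

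For (ii), let $\sigma\colon\bG_{\rmm,k}^{2}\to\bG_{\rmm,k}^{2}$ be the swap involution, which satisfies $\mu\circ\sigma=\mu$ and $\pr_i\circ\sigma=\pr_{3-i}$. From the definition $\sM\boxtimes\sN=\pr_1^{!}\sM\totimes\pr_2^{!}\sN$ one gets a canonical isomorphism $\sigma^{!}(\sN\boxtimes\sM)\cong\sM\boxtimes\sN$; applying $\mu_{\plus}$ and using $\mu_{\plus}\sigma_{\plus}=(\mu\sigma)_{\plus}=\mu_{\plus}$ together with $\sigma_{\plus}\sigma^{!}\cong\id$ (valid because $\sigma$ is an isomorphism of smooth varieties of the same dimension) yields $\sM_1\ast_{\plus}\sM_2\cong\sM_2\ast_{\plus}\sM_1$. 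For (iii), let $\mu^{(3)}\colon\bG_{\rmm,k}^{3}\to\bG_{\rmm,k}$ denote triple multiplication. Factoring $\mu^{(3)}=\mu\circ(\mu\times\id)=\mu\circ(\id\times\mu)$ and applying relative K\"unneth to the middle maps, together with associativity $(\sM_1\boxtimes\sM_2)\boxtimes\sM_3\cong\sM_1\boxtimes(\sM_2\boxtimes\sM_3)$ (itself immediate from functoriality of $\pr^{!}$ and associativity of $\totimes$), identifies both iterated convolutions with $\mu^{(3)}_{\plus}(\sM_1\boxtimes\sM_2\boxtimes\sM_3)$.

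The substance of the argument is purely formal. The main potential pitfall is bookkeeping: checking that the natural isomorphisms are coherent, in the sense that they endow $F\hyphen D^{\rb}_{\ovhol}(\bG_{\rmm,k}/K)$ with a symmetric monoidal structure, and verifying Frobenius-equivariance at each step. The latter is automatic, however, because every functor and transformation I invoke is already formulated at the level of $F\hyphen D^{\rb}_{\ovhol}$ in the references of Abe and Abe--Caro cited in Section 1.
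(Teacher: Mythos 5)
Your proposal is correct and follows essentially the same route as the paper: the paper also proves (i) by unfolding $\delta_1=\iota_{1,\plus}(\sO_{\{1\},\bQ})$, applying the relative K\"unneth formula to $\iota_1\times\id$, and identifying $\{1\}\times\bG_{\rmm,k}$ with $\bG_{\rmm,k}$, and it dismisses (ii) and (iii) as an ``easy exercise of six functors,'' which is exactly the swap-involution and triple-multiplication yoga you spell out. Your write-up merely supplies the details the paper omits.
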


\begin{proof}
    This is an easy exercise of six functors.
    As an example, let us prove (i). 
    Let $\iota_1\colon\{1\}\hookrightarrow \bG_{\rmm,k}$ be the inclusion.
    Then, we have $\delta_1=\iota_{1,\plus}(\sO_{\{1\},\bQ})$, and because $\iota_1$ is proper
    we also have $\delta_1=\iota_{1,!}(\sO_{\{1\},\bQ})$.
    Then, the K\"unneth formula shows that 
    $\delta_1\boxtimes\sM\cong (\iota_1\times\id_{\bG_{\rmm}})_{?}(\sO_{\{1\},\bQ}\boxtimes\sM)$
    for each $?\in\{\plus, !\}$, where the exterior tensor product in the right-hand side
    is taken on $\{1\}\times\bG_{\rmm,k}$.
    Under the identification $\{1\}\times\bG_{\rmm,k}\cong\bG_{\rmm,k}$, 
    the object $\sO_{\{1\},\bQ}\boxtimes\sM$ is identified with $\sM$, and the product map $\mu$ is identified with the identity map,
    which shows that $\delta_1\ast\sM\cong\sM$.
    The isomorphism $\sM\ast\delta_1\cong\sM$ is proved in the same way.
\end{proof}

\begin{proposition}
    \label{prop:frobeniusofconvolution}
    Let $\sM, \sN$ be objects of $F\hyphen D^{\rb}_{\ovhol}(\bG_{\rmm,k}/K)$,
    let $x$ be an element of $k$
    and consider it as a $k$-valued point of $\bG_{\rmm,k}$.
    For each element $y$ of $k$, we denote by $\Tr(\Phi|\sM_y)$ \resp{$\Tr(\Phi|\sN_y)$}
    the Frobenius trace of $\sM$ \resp{$\sN$} at the $k$-valued point defined by $y$.
    Then, the Frobenius trace of $\sM\ast_!\sN$ at the $k$-valued point defined by $x$ equals
    \[
        \sum_{x_1,x_2\in k, x_1x_2=x}\Tr(\Phi|\sM_{x_1})\Tr(\Phi|\sN_{x_2}).
    \]
\end{proposition}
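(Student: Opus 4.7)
The plan is to reduce to a point-counting calculation on the fiber of $\mu$ via base change, and then apply the Grothendieck--Lefschetz trace formula for overholonomic $F\hyphen\sD$-modules. Let $i_x\colon\Spec(k)\hookrightarrow\bG_{\rmm,k}$ denote the $k$-valued point defined by $x$, and form the cartesian square whose right vertical arrow is $\mu$, whose bottom arrow is $i_x$, whose top arrow $j\colon\mu^{-1}(x)\hookrightarrow\bG_{\rmm,k}\times\bG_{\rmm,k}$ is the closed immersion of the scheme-theoretic fiber, and whose left vertical arrow is the structure morphism $\mu'\colon\mu^{-1}(x)\to\Spec(k)$. The base change isomorphism $i_x^{\plus}\circ\mu_!\cong\mu'_!\circ j^{\plus}$ recalled in the preceding subsection then gives
\[
    i_x^{\plus}(\sM\ast_!\sN)\cong\mu'_!\bigl(j^{\plus}(\sM\boxtimes\sN)\bigr).
\]

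Next, I would invoke the Grothendieck--Lefschetz trace formula: for any realizable $k$-variety $Y$ with structure morphism $\pi_Y\colon Y\to\Spec(k)$ and any $\sP\in F\hyphen D^{\rb}_{\ovhol}(Y/K)$, the Frobenius trace of $\pi_{Y,!}\sP$ equals $\sum_{y\in Y(k)}\Tr(\Phi|\sP_y)$. (In the $F$-isocrystal case this is due to \'Etesse--Le Stum, and it is extended to the overholonomic $F$-complex setting in the work of Caro and Abe--Caro.) Applying it to $\sP\defeq j^{\plus}(\sM\boxtimes\sN)$ on $Y\defeq\mu^{-1}(x)$, and using that $\mu^{-1}(x)(k)=\{(x_1,x_2)\in(k^{\times})^2 : x_1x_2=x\}$, we obtain
\[
    \Tr\bigl(\Phi\,\big|\,(\sM\ast_!\sN)_x\bigr)=\sum_{x_1x_2=x}\Tr\bigl(\Phi\,\big|\,(\sM\boxtimes\sN)_{(x_1,x_2)}\bigr).
\]

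To finish, I would use the multiplicativity of Frobenius traces under exterior tensor products. Since $\sM\boxtimes\sN\cong\pr_1^{\plus}\sM\otimes\pr_2^{\plus}\sN$ and since $i_{(x_1,x_2)}^{\plus}$ commutes with both $\pr_j^{\plus}$ (via $i_{x_j}^{\plus}=i_{(x_1,x_2)}^{\plus}\circ\pr_j^{\plus}$) and with the tensor product functor on a point, the fiber $(\sM\boxtimes\sN)_{(x_1,x_2)}$ is identified, as a complex of $K$-vector spaces endowed with a semilinear Frobenius, with $\sM_{x_1}\otimes_K\sN_{x_2}$ carrying the diagonal Frobenius; taking the alternating trace therefore factors as $\Tr(\Phi|\sM_{x_1})\cdot\Tr(\Phi|\sN_{x_2})$. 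Substituting into the previous display proves the claim. The main obstacle I anticipate is the precise invocation of the Lefschetz trace formula for a general overholonomic $F$-complex rather than a single $F$-isocrystal; once this is in hand, both the base change step and the multiplicativity of the trace on $\boxtimes$ are purely formal consequences of the six-functor formalism recalled above.
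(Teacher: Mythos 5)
Your proposal is correct and follows essentially the same route as the paper: base change along the fiber $G_x=\mu^{-1}(x)$, the trace formula for overholonomic $F$-complexes (the paper cites \cite[A.4.1]{Abe13Langlands}), and the identification of the fiber of $\pr_1^{\plus}\sM\otimes\pr_2^{\plus}\sN$ at $(x_1,x_2)$ with $(i_{x_1}^{\plus}\sM)\totimes(i_{x_2}^{\plus}\sN)$ to get multiplicativity of the trace. The only point worth tightening is the last step, where the paper justifies the compatibility of $\otimes$ and $\totimes$ on a point by a specific reference rather than treating it as automatic.
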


\begin{proof}
    Let $\mu\colon\bG_{\rmm,k}\times\bG_{\rmm,k}\to\bG_{\rmm,k}$ denote the multiplication map
    and let $\pr_i\colon\bG_{\rmm,k}\times\bG_{\rmm,k}\to\bG_{\rmm,k}$ denote the $i$-th projection
    for $i\in\{1,2\}$.
    Recall that $\sM\ast_!\sN=\mu_!(\sM\boxtimes\sN)\cong\mu_!(\pr_1^{\plus}\sM\otimes\pr_2^{\plus}\sN)$.
    Let $G_x$ denote the fiber product of $i_x\colon \Spec(k)\hookrightarrow\bG_{\rmm,k}$ and $\mu$,
    let $i_G$ denote the inclusion $G_x\hookrightarrow\bG_{\rmm,k}\times\bG_{\rmm,k}$, and
    let $f\colon G_x\to\Spec(k)$ denote the structure morphism.
    Then, by the base change theorem, we have
    \[
        i_x^{\plus}(\sM\ast_!\sN)\cong f_!i_{G_x}^{\plus}(\pr_1^{\plus}\sM\otimes\pr_2^{\plus}\sN).
    \]
    The Frobenius trace of this object at a closed point can be calculated by the trace formula \cite[A.4.1]{Abe13Langlands}.
    Each $k$-valued point of $G_x$ is of the form $(i_{x_1},i_{x_2})$, where $x_1, x_2\in k^{\times}$ with $x_1x_2=x$,
    and where $i_{x_j}$ is the $k$-valued point $\Spec(k)\hookrightarrow\bG_{\rmm,k}$ defined by $x_j$.
    For this $k$-valued point, we have
    \[
        (i_{x_1},i_{x_2})^{\plus}i_{G_x}^{\plus}(\pr_1^{\plus}\sM\otimes\pr_2^{\plus}\sN)
        \cong (i_{x_1}^{\plus}\sM)\otimes(i_{x_2}^{\plus}\sN)
        \cong (i_{x_1}^{\plus}\sM)\totimes(i_{x_2}^{\plus}\sN),
    \]
    where the last isomorphism follows from \cite[Proposition after 5.8]{Abe11}.
    This shows the claim.
\end{proof}

\subsection{Relation with Fourier Transforms.}
    In this subsection, we will show that the convolutions are ``symmetric'' generalizations of
    the geometric Fourier transform.

    In this subsection, $\inv\colon\bG_{\rmm,k}\to\bG_{\rmm,k}$ denotes the morphism defined by $x\mapsto x^{-1}$,
    $\pr_i\colon\bG_{\rmm,k}\times\bG_{\rmm,k}\to\bG_{\rmm,k}$ denotes the $i$-th projections for $i\in\{1,2\}$.

\begin{lemma}
    \label{lem:mcandintegral}
    For each objects $\sM, \sN\in F\hyphen D^{\rb}_{\ovhol}(\bG_{\rmm,k}/K)$, we have canonical isomorphisms
    \[
        \sM\ast_{\plus}\sN \cong \pr_{2,\plus}\big(\pr_1^{!}(\inv^{\ast}\sM)\widetilde{\otimes}\mu^{!}\sN\big)
        \quad\text{and}\quad
        \sM\ast_{!}\sN \cong \pr_{2,!}\big(\pr_1^{!}(\inv^{\ast}\sM)\widetilde{\otimes}\mu^{!}\sN\big).
    \]
\end{lemma}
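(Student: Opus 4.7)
My plan is to execute a change of variables on $\bG_{\rmm,k}\times\bG_{\rmm,k}$. Consider the automorphism $\sigma\colon\bG_{\rmm,k}\times\bG_{\rmm,k}\to\bG_{\rmm,k}\times\bG_{\rmm,k}$ defined by $\sigma(x,y)=(x^{-1},xy)$. A direct check shows that it is an involution and satisfies
\[
\pr_1\circ\sigma=\inv\circ\pr_1,\qquad\pr_2\circ\sigma=\mu,\qquad\mu\circ\sigma=\pr_2.
\]
These identities are exactly what is needed to exchange the roles of $\mu$ and $\pr_2$ at the cost of a twist by $\inv$ on the first factor.

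Since $\sigma$ is an isomorphism (in particular both an open immersion and proper), the excerpt yields $\sigma^!\cong\sigma^{\plus}$ and $\sigma_{\plus}\cong\sigma_{!}$, and the adjoint pair $(\sigma^{\plus},\sigma_{\plus})$ attached to an isomorphism is a pair of inverse equivalences of categories. Hence I may freely insert the natural isomorphism $\sigma_{\plus}\sigma^!\cong\mathrm{id}$ (equivalently $\sigma_{!}\sigma^!\cong\mathrm{id}$) inside the expression $\mu_{?}\bigl(\pr_1^!\sM\totimes\pr_2^!\sN\bigr)$ defining $\sM\ast_{?}\sN$ for $?\in\{\plus,!\}$. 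Combining this with the compatibility $\sigma^!(-\totimes-)\cong\sigma^!(-)\totimes\sigma^!(-)$ recalled in Section 1 and the functoriality of pushforward, I would then compute
\begin{align*}
\sM\ast_{\plus}\sN
&\cong(\mu\circ\sigma)_{\plus}\bigl(\sigma^!\pr_1^!\sM\totimes\sigma^!\pr_2^!\sN\bigr)\\
&=\pr_{2,\plus}\bigl((\pr_1\circ\sigma)^!\sM\totimes(\pr_2\circ\sigma)^!\sN\bigr)\\
&=\pr_{2,\plus}\bigl(\pr_1^!\inv^!\sM\totimes\mu^!\sN\bigr),
\end{align*}
and an identical chain of isomorphisms handles $\ast_{!}$ after replacing every subscript $\plus$ by $!$. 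Finally, since $\inv$ is an open immersion, the excerpt identifies $\inv^!$ with $\inv^{\ast}$, which gives the stated formula.

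The argument is entirely formal. The only mildly delicate point is the identification $\sigma_{\plus}\sigma^!\cong\mathrm{id}$ for the isomorphism $\sigma$, which is not stated verbatim in the excerpt but is a routine consequence of the fact that on a morphism which is simultaneously an open immersion and proper, the adjunctions of the six-functor formalism degenerate to inverse equivalences. Apart from this, there is no serious obstacle; the whole lemma is a change-of-variables computation.
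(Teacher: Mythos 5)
Your argument is correct and is essentially the same as the paper's: both proofs introduce the involution $\sigma(x,y)=(x^{-1},xy)$ and use the identities $\mu\circ\sigma=\pr_2$, $\pr_2\circ\sigma=\mu$, $\inv\circ\pr_1\circ\sigma=\pr_1$ to rewrite the convolution. The only cosmetic difference is that the paper routes the computation through the projection formula for $\sigma$ together with $\sigma_{\plus}\cong\sigma^{\ast}$, whereas you invoke directly that $\sigma^!$ and $\sigma_{\plus}$ are inverse equivalences commuting with $\totimes$; for an automorphism these are interchangeable.
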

\begin{proof}
    The proof goes as in \cite[(5.1.10)]{Katz90ESDE}.
    Let $\sigma\colon \bG_{\rmm,k}\times\bG_{\rmm,k}\to\bG_{\rmm,k}\times\bG_{\rmm,k}$ the morphism defined by $(x,y)\mapsto (x^{-1},xy)$.
    Since $\mu=\pr_2\circ\sigma$, $\sM\ast_{\plus}\sN$ \resp{$\sM\ast_{!}\sN$} is isomorphic to 
    $\pr_{2,\plus}\circ\sigma_{\plus}(\pr_1^{!}\sM\widetilde{\otimes}\pr_2^{!}\sN)$
    \resp{$\pr_{2,!}\circ\sigma_{\plus}(\pr_1^{!}\sM\widetilde{\otimes}\pr_2^{!}\sN)$ because $\sigma_{\plus}=\sigma_!$}.
    By using $\inv\circ\pr_1\circ\sigma=\pr_1$, we have for each $?\in\{\plus,!\}$
    \[
        \sM\ast_{?}\sN \cong \pr_{2,?}\circ\sigma_{\plus} \big(\sigma^{\ast}\pr_1^!(\inv^{\ast}\sM)\widetilde{\otimes}\pr_2^!\sN\big)\cong
        \pr_{2,?}\big(\pr_1^!(\inv^{\ast}\sM)\totimes\sigma_{\plus}(\pr_2^!\sN)\big),
    \]
    where the second isomorphism is the projection formula applied to $\sigma$.
    Since $\sigma\circ\sigma=\id$, we have a natural isomorphism $\sigma^{\ast}\cong\sigma_{\plus}$ of functors,
    which shows that $\sigma_{\plus}\circ\pr_2^!\cong (\pr_2\circ\sigma)^!=\mu^!$. This shows the claim.
\end{proof}

\begin{proposition}
    \label{prop:fourierandconv}
    Assume that $K$ contains a primitive $p$-th root of unity.
    We denote by $j\colon \bG_{\rmm,k}\hookrightarrow\bA^1_k$ the natural inclusion.
    Then, for each object $\sM$ of $F\hyphen D^{\rb}_{\ovhol}(\bG_{\rmm,k}/K)$, we have natural isomorphisms
    \[
        j^{\ast}\big(\FT_{\psi}(j_{\plus}\inv^{\ast}\sM)\big) \cong \sM\ast_{\plus} (j^{\ast}\sL_{\psi})[-1]
        \quad\text{and}\quad
        j^{\ast}\big(\FT_{\psi}(j_!\inv^{\ast}\sM)\big) \cong \sM\ast_{!} (j^{\ast}\sL_{\psi})[-1].
    \]
\end{proposition}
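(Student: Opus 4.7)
The strategy is to unfold $\FT_\psi(j_\plus\inv^\ast\sM)$ using base change and the projection formula to migrate the open immersion inside, then match the resulting expression with the formula of Lemma \ref{lem:mcandintegral}. I describe the $\ast_\plus$-case in detail; the $\ast_!$-case is parallel, using the $\pr_{2,!}$-form of $\FT_\psi$ furnished by Proposition \ref{prop:miracleofft}.

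\textbf{Main computation.} Put $J\defeq j\times\id\colon\bG_{\rmm,k}\times\bA^1_k\hookrightarrow\bA^1_k\times\bA^1_k$ and $J''\defeq j\times j\colon\bG_{\rmm,k}\times\bG_{\rmm,k}\hookrightarrow\bA^1_k\times\bA^1_k$, and denote the projections from $\bG_{\rmm,k}\times\bA^1_k$ and $\bG_{\rmm,k}\times\bG_{\rmm,k}$ by $\widetilde{\pr_i}$ and $\pr_i'$, respectively. Base change applied to the cartesian square formed by $\pr_1$ and $j$ gives $\pr_1^!j_\plus\inv^\ast\sM\cong J_\plus\widetilde{\pr_1}^!\inv^\ast\sM$. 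The projection formula for $J$, combined with the identity $\pr_2\circ J=\widetilde{\pr_2}$, then yields
\[
    \FT_\psi(j_\plus\inv^\ast\sM) \cong \widetilde{\pr_2}_{,\plus}\bigl(J^!\sL_{\mu,\psi}\totimes\widetilde{\pr_1}^!\inv^\ast\sM\bigr).
\]
Applying $j^\ast$ and invoking base change once more along the open immersion $\id\times j\colon\bG_{\rmm,k}\times\bG_{\rmm,k}\hookrightarrow\bG_{\rmm,k}\times\bA^1_k$, we arrive at
\[
    j^\ast\FT_\psi(j_\plus\inv^\ast\sM) \cong \pr_{2,\plus}'\bigl((J'')^!\sL_{\mu,\psi}\totimes(\pr_1')^!\inv^\ast\sM\bigr).
\]

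\textbf{Closing step.} The decisive geometric fact is that multiplication preserves $\bG_{\rmm,k}$: writing $m$ for the multiplication on $\bG_{\rmm,k}$, we have $\mu\circ J''=j\circ m$, whence
\[
    (J'')^!\sL_{\mu,\psi}=(J'')^!\mu^!\sL_\psi[-1]\cong m^!j^\ast\sL_\psi[-1].
\]
Substituting this into the displayed expression and comparing with Lemma \ref{lem:mcandintegral} applied to $\sM$ and $j^\ast\sL_\psi$ delivers the isomorphism $j^\ast\FT_\psi(j_\plus\inv^\ast\sM)\cong\sM\ast_\plus(j^\ast\sL_\psi)[-1]$, the shift tracing back to the definition of $\sL_{\mu,\psi}$.

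\textbf{Main obstacle.} The argument is organizational rather than conceptual: several cartesian squares, base changes, and one shift must be tracked simultaneously, but the substance is the single observation that multiplication lands in $\bG_{\rmm,k}$, while everything else reduces to formal properties of the six functors. For the $\ast_!$-version, the same chain of manipulations works with $\pr_{2,!}$ replacing $\pr_{2,\plus}$ (Proposition \ref{prop:miracleofft}) and the $!$-analogs of base change and projection formula, which follow from the stated $\plus$-versions by duality.
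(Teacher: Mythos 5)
Your argument is correct and is essentially the computation the paper omits: the paper's proof of the first isomorphism simply defers to Katz [5.2.3], and for the second it performs the same chain with $\pr_{2,!}$ before invoking Proposition \ref{prop:miracleofft}, exactly as you do, so you have just written out the suppressed base-change/projection-formula bookkeeping. The one imprecision is the closing claim that the $!$-projection formula follows from the $\plus$-version ``by duality'' --- dualizing trades $\totimes$ for $\otimes$, so for the open immersion $J$ one should instead argue directly using $J^!=J^{\ast}$ and the localization triangle (a point the paper also glosses over).
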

\begin{proof}
    The proof of the first isomorphism goes precisely as in the over-$\bC$ case \cite[5.2.3]{Katz90ESDE} and is omitted here.
    In the same way, we may construct an isomorphism
    \[
        j^{\ast}\bigg( \pr_{2,!}\big(\sL_{\mu,\psi}\totimes\pr_1^!j_!\inv^{\ast}\sM\big)\bigg)\cong\sM\ast_{!}(j^{\ast}\sL_{\psi}).
    \]
    Now, the isomorphism $\pr_{2,!}(\sL_{\mu,\psi}\totimes\pr_1^!\sN)\cong \FT_{\psi}(\sN)$
    (Proposition \ref{prop:miracleofft})
    applied to $\sN=j_!\inv^{\ast}\sM$ proves the second isomorphism.
\end{proof}

\section{$p$-adic hypergeometric $\sD$-modules.}

In this section, we introduce the $p$-adic hypergeometric $\sD$-modules in two ways;
one is given by explicit hypergeometric equations, and the other uses the multiplicative convolutions.
After that, we compare these two $\sD$-modules and investigate the properties of them.

\subsection{Hypergeometric differential operators.}

First, let us define a hypergeometric differential $\sD^{\dag}$-module on $\bG_{\rmm,k}$
by using hypergeometric differential operators.

\begin{definition}
    \label{def:hgoperators}
    Let $\pi$ be a non-zero element of $K$, and
    let $\alpha_1,\ldots,\alpha_m,\beta_1,\ldots,\beta_n$ be elements of $K$.
    We write the sequence $\alpha_1,\ldots,\alpha_m$ by $\balpha$ and $\beta_1,\ldots,\beta_n$ by $\bbeta$.
\begin{itemize}
    \item[\textup{(i)}] We define the hypergeometric operator $\Hyp_{\pi}(\balpha;\bbeta)=\Hyp_{\pi}(\alpha_1,\ldots,\alpha_m;\beta_1,\ldots,\beta_n)$ to be
    \[
        \Hyp_{\pi}(\balpha;\bbeta) \defeq \prod_{i=1}^m (x\partial-\alpha_i) - (-1)^{m+np}\pi^{m-n}x\prod_{j=1}^n(x\partial-\beta_j) 
    \]

\item[\textup{(ii)}] We define a $B_1(K)^{\dag}$-module $\sH_{\pi}(\balpha;\bbeta)=\sH_{\pi}(\alpha_1,\ldots,\alpha_m;\beta_1,\ldots,\beta_n)$ by
\[
    \sH_{\pi}(\balpha;\bbeta) \defeq B_1(K)^{\dag}/B_1(K)^{\dag}\Hyp_{\pi}(\balpha;\bbeta).
\]
\end{itemize}
\end{definition}

\begin{remark}
    Recall from \ref{para:DoverGm} that we are identifying the category of coherent $\sD_{\widehat{\bP^1_V},\bQ}^{\dag}(\pdag{\{0,\infty\}})$-modules
and that of coherent $B_1(K)^{\dag}$-modules.
Since $\sH_{\pi}(\balpha; \bbeta)$ is a coherent $B_1(K)^{\dag}$-module by definition, 
it is also regarded as a coherent $\sD_{\bP^1,\bQ}^{\dag}(\pdag{\{0,\infty\}})$-module.

However, the fact that $\sH_{\pi}(\balpha; \bbeta)$ is a coherent $\sD_{\widehat{\bP^1_V},\bQ}^{\dag}(\pdag{\{0,\infty\}})$-module does not mean that $\sH_{\pi}(\balpha; \bbeta)$ is coherent as a $\sD_{\widehat{\bP^1_V},\bQ}^{\dag}$-module
    (\cite[Remarque after Th\'eor\`eme 4.4.12]{BerthelotI}).
This is a reason why we cannot immediately conclude that it is an overholonomic $\sD_{\widehat{\bP^1_V},\bQ}^{\dag}$-module.
    We later show that it is in fact overholonomic if $\pi$ is the element associated with a non-trivial
	additive character and if $\alpha_i$'s and $\beta_j$'s are the elements associated with multiplicative characters 
	under a suitable condition.
\end{remark}

\begin{lemma}
    \label{lem:calcofhyp}
    Under the notation in Definition \ref{def:hgoperators}, $\sH_{\pi}(\balpha; \bbeta)$ has the following properties.
\begin{itemize}
    \item[\textup{(i)}] Let $\widetilde{\inv}\colon(\widehat{\bP^1_V},\{0,\infty\})\to(\widehat{\bP^1_V},\{0, \infty\})$ denote the morphism of d-couples defined by the inversion morphism $\overline{\inv}\colon\widehat{\bP^1_V}\to\widehat{\bP^1_V}$.
        (This morphism $\widetilde{\inv}$ realizes the inversion morphism $\inv\colon\bG_{\rmm,k}\to\bG_{\rmm,k}$.)
        Then, $\widetilde{\inv}^{\ast}\sH_{\pi}(\balpha; \bbeta)$ is isomorphic to $\sH_{(-1)^p\pi}(-\bbeta, -\balpha)$,
        where $-\balpha$ \resp{$-\bbeta$} denotes the sequence $-\alpha_1,\ldots,-\alpha_m$ \resp{$-\beta_1,\ldots,-\beta_n$}.
    \item[\textup{(ii)}] Let $\gamma$ be an element of $\frac{1}{q-1}\bZ$. Then, $\sH_{\pi}(\balpha;\bbeta)\otimes^{\dag}_{\sO_{\bP^1,\bQ}(\pdag{\{0,\infty\}})}\sK_{\gamma}$ is isomorphic to
        $\sH_{\pi}(\balpha+\gamma;\bbeta+\gamma)$, where $\balpha+\gamma$ \resp{$\bbeta+\gamma$} denotes the sequence
        $\alpha_1+\gamma,\ldots,\alpha_m+\gamma$ \resp{$\beta_1+\gamma,\ldots,\beta_n+\gamma$}.
\end{itemize}
\end{lemma}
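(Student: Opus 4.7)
For (i), my approach is a direct computation at the level of cyclic presentations. Since $\overline{\inv}\colon\widehat{\bP^1_V}\to\widehat{\bP^1_V}$ is an isomorphism of formal schemes, the functor $\widetilde{\inv}^{\ast}$ corresponds, under the equivalence of Corollary \ref{para:DoverGm}, to the ring automorphism of $B_1(K)^{\dag}$ induced by the substitution $x\mapsto y^{-1}$; the chain rule sends $x\partial$ to $-y\partial$. Hence $\widetilde{\inv}^{\ast}\sH_{\pi}(\balpha;\bbeta)$ is presented as $B_1(K)^{\dag}/B_1(K)^{\dag} Q$, where
\[
    Q = (-1)^m\prod_{i=1}^m(y\partial+\alpha_i) - (-1)^{m+np+n}\pi^{m-n}\, y^{-1}\prod_{j=1}^n(y\partial+\beta_j).
\]
Multiplying $Q$ on the left by the unit $(-1)^n y\in B_1(K)^{\dag}$ absorbs the factor $y^{-1}$, and a short sign computation, obtained by expanding $((-1)^p\pi)^{n-m}$, shows that the result is a unit scalar multiple of $\Hyp_{(-1)^p\pi}(-\bbeta,-\balpha)$. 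Thus the left ideals $B_1(K)^{\dag}Q$ and $B_1(K)^{\dag}\Hyp_{(-1)^p\pi}(-\bbeta,-\balpha)$ coincide and their cyclic quotients are isomorphic.

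For (ii), I construct mutually inverse $B_1(K)^{\dag}$-linear morphisms. Let $f$ and $e_\gamma$ denote the classes of $1\in B_1(K)^{\dag}$ in $\sH_{\pi}(\balpha;\bbeta)$ and $\sK_\gamma$, so that $(x\partial-\gamma)e_\gamma=0$. Applying the Leibniz rule to the diagonal $\sD$-action on the $\otimes^{\dag}_{\sO_{\widehat{\bP^1_V},\bQ}(\pdag{\{0,\infty\}})}$-tensor product yields
\[
    (x\partial-(\alpha_i+\gamma))(f\otimes e_\gamma)=\bigl((x\partial-\alpha_i)f\bigr)\otimes e_\gamma,
\]
and analogously with $\beta_j+\gamma$ in place of $\alpha_i+\gamma$, while $x$ acts through the first factor. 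Iterating these identities transforms the action of $\Hyp_{\pi}(\balpha+\gamma;\bbeta+\gamma)$ on $f\otimes e_\gamma$ into $(\Hyp_{\pi}(\balpha;\bbeta)f)\otimes e_\gamma=0$, so the assignment $1\mapsto f\otimes e_\gamma$ gives a well-defined morphism $\sH_{\pi}(\balpha+\gamma;\bbeta+\gamma)\to\sH_{\pi}(\balpha;\bbeta)\otimes^{\dag}\sK_\gamma$. Running the same recipe with $(\balpha+\gamma,\bbeta+\gamma,-\gamma)$ in place of $(\balpha,\bbeta,\gamma)$ produces a morphism in the opposite direction, and the two compositions are the identity after composing with the canonical trivialization $\sK_\gamma\otimes^{\dag}\sK_{-\gamma}\cong\sO_{\widehat{\bP^1_V},\bQ}(\pdag{\{0,\infty\}})$.

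The only delicate step is the sign bookkeeping in (i): the factor $(-1)^{m+np}$ appearing in Definition \ref{def:hgoperators} and the twist $(-1)^p$ applied to $\pi$ in the statement of (i) are exactly calibrated so that, after clearing $y^{-1}$ by left multiplication by $(-1)^n y$, every remaining sign cancels. Once this calibration is observed, (i) and (ii) are essentially formal.
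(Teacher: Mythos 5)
Your proposal is correct and follows essentially the same route as the paper: for (i), identifying $\widetilde{\inv}^{\ast}$ with the substitution $x\mapsto x^{-1}$, $x\partial\mapsto -x\partial$ on $B_1(K)^{\dag}$ and checking that the transported generator differs from $\Hyp_{(-1)^p\pi}(-\bbeta;-\balpha)$ by left multiplication by a unit (your sign bookkeeping is right: $(-1)^n y\,Q=(-1)^{m+np+1}\pi^{m-n}\Hyp_{(-1)^p\pi}(-\bbeta;-\balpha)$), and for (ii), the observation that tensoring with $\sK_{\gamma}$ shifts the action of $x\partial$ on the generator by $\gamma$. Your (ii) packages this as explicit mutually inverse morphisms via $\sK_{\gamma}\otimes^{\dag}\sK_{-\gamma}\cong\sO$, whereas the paper simply notes that the underlying $\sO_{\widehat{\bP^1_V},\bQ}(\pdag{\{0,\infty\}})$-module is unchanged; the content is identical.
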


\begin{proof}
    (i) Since $\widetilde{\inv}^{\ast}$ is identified with the base extension via $B_1(K)^{\dag}\to B_1(K)^{\dag}$ defined by 
    $x\mapsto x^{-1}$ and $\partial\mapsto -x^2\partial$, it suffices to show that the left ideal generated by
\[
    \prod_{i=1}^m(-x\partial-\alpha_i)-(-1)^{m+np}\pi^{m-n}x^{-1}\prod_{j=1}^n(-x\partial-\beta_j)
\]
equals that generated by $\Hyp_{(-1)^p\pi}(-\bbeta;-\balpha)$.
    This can be seen by a direct calculation:
    \begin{align*}
        & \prod_{i=1}^m(-x\partial-\alpha_i)-(-1)^{m+np}\pi^{m-n}x^{-1}\prod_{j=1}^n(-x\partial-\beta_j)\\
        =& (-1)^{m+np+n+1}\pi^{m-n}x^{-1}
        \left\{\prod_{j=1}^n(x\partial+\beta_j)-(-1)^{n+mp}\big((-1)^p\pi\big)^{n-m}x\prod_{i=1}^m(x\partial+\alpha_i)\right\}\\
        =& (-1)^{m+np+n+1}\pi^{m-n}x^{-1}\Hyp_{(-1)^p\pi}(-\bbeta;-\balpha).
    \end{align*}

    (ii)
    We know that $\sK_{\gamma}=B_1(K)^{\dag}/B_1(K)^{\dag}(x\partial-\gamma)$ is isomorphic to $\sO_{\widehat{\bP^1_V},\bQ}(\pdag{\{0,\infty\}})$ as
    an $\sO_{\widehat{\bP^1_V},\bQ}(\pdag{\{0,\infty\}})$-module. 
    Therefore, the functor $\text{-- }\otimes^{\dag}_{\sO_{\widehat{\bP^1_V},\bQ}(\pdag{\{0,\infty\})}}\sK_{\gamma}$ on the category of coherent $B_1(K)^{\dag}$-modules
    does not change the underlying $\sO_{\widehat{\bP^1_V},\bQ}(\pdag{\{0,\infty\}})$-module,
    and $\partial$ acts on the resulting $B_1(K)^{\dag}$-module as
    the action of $\partial + \gamma x^{-1}$ on the original $B_1(K)^{\dag}$-module.
    By this change of the action, $x\partial$ goes $x\partial-\gamma$, which proves the assertion.
\end{proof}

Later, we also have to consider the hypergeometric differential operator ``on $\bA^1_k$'', not only ``on $\bG_{\rmm,k}$''.
The following proposition describes the first essential relationships between them.

\begin{proposition}
    Let $\pi$ be a non-zero element of $K$ and let $\alpha_1,\ldots,\alpha_m,\beta_1,\ldots,\beta_n$ be elements of $K$.
    Let $\widetilde{j}\colon(\bP^1_k,\{0,\infty\})\hookrightarrow(\bP^1_k,\{\infty\})$ denote the morphism of d-couples defined by $\id_{\bP^1_k}$.
    (This morphism realizes the inclusion morphism $j\colon\bG_{\rmm,k}\hookrightarrow\bA^1_k$.)
    \begin{itemize}
        \item[\rm (i)] $\widetilde{j}^{\ast}\big(A_1(K)^{\dag}/A_1(K)^{\dag}\Hyp_{\pi}(\balpha;\bbeta)\big)$ is
            isomorphic to $\sH_{\pi}(\balpha;\bbeta)$.
        \item[\rm (ii)]
		Assume that $|\pi|=|p|^{1/(p-1)}$, that $\alpha_i$'s belong to $\frac{1}{q-1}\bZ\setminus\bZ$
		and that $\beta_j$'s belong to $\frac{1}{q-1}\bZ$.
           Then, the natural morphism
    \[
        A_1(K)^{\dag}/A_1(K)^{\dag}\Hyp_{\pi}(\balpha;\bbeta) \to \widetilde{j}_{\plus}\widetilde{j}^{\ast}\big(A_1(K)^{\dag}/A_1(K)^{\dag}\Hyp_{\pi}(\balpha;\bbeta)\big)
    \]
    of $A_1(K)^{\dag}$-modules is an isomorphism.
    \end{itemize}
    \label{prop:connectiontype}
\end{proposition}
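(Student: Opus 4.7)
The plan is to obtain (i) as a routine base change and concentrate on (ii).

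For (i), the functor $\widetilde{j}^{\ast}$ corresponds, under the equivalences of Corollaries \ref{para:DoverA1} and \ref{para:DoverGm}, to base change along the ring inclusion $A_1(K)^{\dag}\hookrightarrow B_1(K)^{\dag}$. Base change commutes with cokernel presentations, so $A_1(K)^{\dag}/A_1(K)^{\dag}\Hyp_{\pi}(\balpha;\bbeta)$ is sent to $B_1(K)^{\dag}/B_1(K)^{\dag}\Hyp_{\pi}(\balpha;\bbeta) = \sH_{\pi}(\balpha;\bbeta)$.

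For (ii), put $M\defeq A_1(K)^{\dag}/A_1(K)^{\dag}\Hyp_{\pi}$ and $N\defeq B_1(K)^{\dag}/B_1(K)^{\dag}\Hyp_{\pi}$. Since $\widetilde{j}_{\plus}$ for our pair of d-couples is simply the forgetful functor along $A_1(K)^{\dag}\hookrightarrow B_1(K)^{\dag}$, I must show that the natural map $\phi\colon M\to N$ is an isomorphism. Write $\Hyp_{\pi}=A(x\partial)-Cx\,B(x\partial)$ with $A(T)=\prod_{i}(T-\alpha_i)$, $B(T)=\prod_{j}(T-\beta_j)$, and $C=(-1)^{m+np}\pi^{m-n}$. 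The commutation $x^{-1}(x\partial)=(x\partial+1)x^{-1}$ rearranges the relation $\Hyp_{\pi}\cdot\bar 1 = 0$ in $N$ into
\[
    A(x\partial+1)(x^{-1}\bar 1)=C\,B(x\partial)\bar 1.
\]
For surjectivity I would construct an operator $U\in A_1(K)^{\dag}$, realized as a $p$-adically convergent formal series in the generators, with $U\cdot A(x\partial+1)\cdot\bar 1 = \bar 1$ in $N$; the non-integrality of the roots $\alpha_i-1$ of $A(T+1)$ makes the formal inversion possible on the eigenspace-like decomposition of $x\partial$ acting on monomials, and then $x^{-1}\bar 1=C\,UB(x\partial)\bar 1\in\phi(M)$. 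Iterating yields $x^{-k}\bar 1\in\phi(M)$ for every $k\geq 1$, and, combined with uniform convergence bounds, extends to all of $N$. For injectivity, given $P\in A_1(K)^{\dag}$ with $P=Q\Hyp_{\pi}$ for some $Q\in B_1(K)^{\dag}$, I would decompose $Q$ by $x$-degree and exploit that the $x^0$-coefficient of $\Hyp_{\pi}$ is the nonzero constant $A(0)=(-1)^m\prod_i\alpha_i$: a careful filtration argument keeping track of both $x$-degree and $\partial$-order (so as to absorb commutator corrections from $\partial^k x^l = x^l\partial^k +(\text{lower})$) then forces the strictly-negative-$x$ part of $Q$ to vanish.

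The main obstacle will be the $p$-adic convergence of $U$ inside $A_1(K)^{\dag}$. The infimum of $|n+1-\alpha_i|_p$ as $n$ ranges over $\bZ$ is not uniformly bounded below, so a naive monomial-by-monomial inversion does not land in $A_1(K)^{\dag}$. The hypotheses $\alpha_i\in\frac{1}{q-1}\bZ\setminus\bZ$ (so that $q-1$, coprime to $p$, clears the denominator of $A(n+1)$) and $|\pi|=|p|^{1/(p-1)}$ are exactly the calibration needed so that a Dwork-style estimate bounds the deterioration of $p$-adic denominators against the growth allowed by the overconvergence condition defining $A_1(K)^{\dag}$.
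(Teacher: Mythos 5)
Part (i) of your proposal is fine and is essentially the paper's argument: $\widetilde{j}^{\ast}$ is base change along $A_1(K)^{\dag}\hookrightarrow B_1(K)^{\dag}$, which preserves the presentation by $\Hyp_{\pi}(\balpha;\bbeta)$.

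For (ii), your reformulation is correct ($\widetilde{j}_{\plus}$ is restriction of scalars here, so the claim is that $\phi\colon M\to N$ is bijective), and you have correctly located the crux --- but you have not proved it. The entire mathematical content of (ii) is the $p$-adic estimate that you defer to ``a Dwork-style estimate'': one must bound $\bigl|\prod_{s}(s-\alpha_i)\bigr|$ from below and $\bigl|\prod_{s}(s-\beta_j)\bigr|$ from above (the paper's Lemma \ref{lem:valofvp}, which uses $\alpha_i,\beta_j\in\frac{1}{q-1}\bZ$ to control how often high powers of $p$ divide these products), feed the bounds into the explicit recurrence relating consecutive coefficients, and check against $|\pi|=|p|^{1/(p-1)}$ that the resulting series converge in the dagger sense (for surjectivity) resp.\ that the coefficients are forced to grow geometrically and hence must vanish (for injectivity). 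Asserting that the hypotheses are ``exactly the calibration needed'' names the difficulty without resolving it, so as written the proposal is a plan rather than a proof.

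Moreover, the route you choose makes these estimates harder to set up than necessary. The paper first reduces (ii), via the localization triangle $\bR\underline{\Gamma}^{\dag}_{\{0\}}\to\id\to\widetilde{j}_{\plus}\widetilde{j}^{\ast}$ and the identification of $\widetilde{i}^{!}$ of the module with the two-term complex given by left multiplication $l_x$ by $x$, to the bijectivity of $l_x$ on $M$ itself; the computation then lives in $A_1(K)^{\dag}/xA_1(K)^{\dag}$, which has the single family of basis elements $\partial^{[l]}$, $l\geq 0$, on which $\Hyp_{\pi}$ induces a clean two-term recurrence. Working directly in $B_1(K)^{\dag}$, as you propose, an element may have nonzero coefficients for infinitely many negative powers of $x$, so ``decomposing $Q$ by $x$-degree'' has no lowest term to induct on, and a purely algebraic filtration argument for injectivity cannot terminate without analytic input. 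Note also that for the grading $\deg x=1$, $\deg\partial=-1$ the degree-zero component of $\Hyp_{\pi}$ is the operator $A(x\partial)$, not the scalar $A(0)$; its action on $x^l\partial^{[k]}$ is by $\prod_i(k-\alpha_i)$, which is nonzero but not bounded below in absolute value --- the same estimate problem again. I would either supply the estimates in full within your framework or adopt the paper's reduction to $l_x$.
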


\begin{proof}
    (i)
    The assertion follows from the fact that
    the functor $\widetilde{j}^{\ast}$ is exact on the category of coherent $\sD^{\dag}_{\widehat{\bP^1_V},\bQ}(\pdag{\{\infty\}})$-modules.

    (ii)
    Let $\sL$ denote the $A_1(K)^{\dag}$-module $A_1(K)^{\dag}/A_1(K)^{\dag}\Hyp_{\pi}(\balpha;\bbeta)$
    and let $\varphi\colon \sL\to\widetilde{j}_{\plus}\widetilde{j}^{\ast}\sL$ denote the natural morphism in question.
    In order to prove that $\varphi$ is an isomorphism,
    it suffices to prove that the left multiplication $l_x\colon\sL\to\sL$ by $x$ on $\sL$ is bijective.
    In fact, suppose that we have proved the bijectivity of $l_x$.
    Since the restriction of $\varphi$ (regarded as a morphism of $\sD^{\dag}_{\widehat{\bP^1_V},\bQ}(\pdag{\{\infty\}})$-modules)
    to $\widehat{\bP_V^1}\setminus\{0\}$ is an isomorphism,
    it suffices to show that the restriction of $\varphi$ on $\widehat{\bA^1_V}$ is an isomorphism.
    Let us denote by $\widetilde{i}\colon(\{0\},\emptyset)\to(\widehat{\bP^1_V},\{\infty\})$ the morphism of d-couples
    defined by the closed immersion $\bar{i}\colon\{0\}\hookrightarrow\widehat{\bP^1_V}$.
    Then, by the definition of extraordinary pull-back \cite[(1.1.6.1)]{Caro06CM}
    (cf. \cite[4.3.2]{Berthelot02Ast}),
    $\widetilde{i}^!\sL$ is the complex $\left[\sL\xrightarrow{l_x}\sL\right]$, where the target is placed at degree zero.
    Therefore, the bijectivity of $l_x$ is equivalent to $\widetilde{i}^!\sL=0$.
    Moreover, if we denote by $i'\colon \{0\}\hookrightarrow\bA^1_k$ the inclusion,
    then $\widetilde{i}^!\sL=0$ implies $i'^!(\sL|_{\widehat{\bA^1_V}})=0$.
    In the localization triangle \cite[(1.1.6.5)]{Caro06CM} in $D^{\rb}_{\coh}(\sD^{\dag}_{\widehat{\bA^1_V},\bQ})$,
    \[
        \bR\underline{\Gamma}^{\dag}_{\{0\}}\big(\sL|_{\widehat{\bA^1_V}}\big)\longrightarrow \sL|_{\widehat{\bA^1_V}}\longrightarrow \big(\widetilde{j}_{\plus}\widetilde{j}^{\ast}\sL\big)|_{\widehat{\bA^1_V}}\longrightarrow +1,
    \]
    the first term is isomorphic to $i'_{\plus}\circ i'^{!}\big(\sL|_{\widehat{\bA^1_V}}\big)$ \cite[Corollaire 3.4.7, Th\'eor\`eme 3.4.9 and the first point in Remarques 3.4.10]{Caro12}.
    This shows that the restriction of $\varphi$ on $\widehat{\bA^1_V}$ is an isomorphism, which concludes the proof.

    Now, we show that $l_x$ is bijective. We firstly work on the injectivity.
    Let $P, Q$ be elements of $A_1(K)^{\dag}$ that satisfies $xP=Q\Hyp_{\pi}(\balpha;\bbeta)$.
    We show that $Q\in xA_1(K)^{\dag}$; then, since $x$ is not a zero-divisor in $A_1(K)^{\dag}$,
    we get that $P\in A_1(K)^{\dag}\Hyp_{\pi}(\balpha;\bbeta)$ and the injectivity follows.
    In order to show that $Q\in xA_1(K)^{\dag}$, we may assume that $Q$ is of the form
    $Q=\sum_{l=0}^{\infty}c_l\partial^{[l]}$, where $c_l$'s are elements of $K$
    satisfying $\exists C>0, \exists\eta<1, \forall l, |c_l|<C\eta^l$.
    Then,
    because $\Hyp_{\pi}(\balpha;\bbeta)=\prod_i(x\partial-\alpha_i)-(-1)^{m+np}\pi^{m-n} x\prod_j(x\partial-\beta_j)$,
    and because $\partial^{[l]}x\equiv\partial^{[l-1]} \pmod{xA_1(K)^{\dag}}$, we have
    \begin{align*}
        Q\Hyp_{\pi}(\balpha;\bbeta)&\equiv 
        \sum_{l=0}^{\infty}c_l\prod_{i=1}^m(l-\alpha_i)\partial^{[l]}
        -(-1)^{m+np}\pi^{m-n}\sum_{l=1}^{\infty}c_l\prod_{j=1}^n(l-1-\beta_j)\partial^{[l-1]}\\ 
        & \hspace{200pt} \pmod{xA_1(K)^{\dag}}.
    \end{align*}
    By assumption, the left-hand side belongs to $xA_1(K)^{\dag}$, which shows that, for each $l$,
    \[
        c_l\prod_{i=1}^m(l-\alpha_i) =
        (-1)^{m+np}\pi^{m-n}c_{l+1}\prod_{j=1}^n(l-\beta_j).
    \]
    Fix a positive integer $l$ that exceeds all $\alpha_j$'s and $\beta_j$'s.
    Then, for each natural number $k$, we have
    \[
        c_{l+k}=(-1)^{k(m+np)}\pi^{-k(m-n)}\frac{\prod_{i=1}^m(l+k-1-\alpha_i)(l+k-2-\alpha_i)\dots(l-\alpha_i)}
        {\prod_{j=1}^n(l+k-1-\beta_j)(l+k-2-\beta_j)\dots(l-\beta_j)}c_l.
    \]
    Lemma \ref{lem:valofvp} below shows that $\left|(l+k-1-\beta_j)\ldots(l-\beta_j)\right|^{-1}\geq p^{k/(p-1)-1}k^{-1}$ and
    $\big|(l+k-1-\alpha_i)\ldots(l-\alpha_i)\big|\geq p^{-k/(p-1)}(q-1)^{-1}(l+k-1-\alpha_i)^{-1}$.
   By these inequalities and $|\pi|=p^{-1/(p-1)}$, we have
   \[
        |c_{l+k}| \geq  p^{-m}(q-1)^{-2m}k^{-n}\prod_{i=1}^m(l+k-1-\alpha_i)^{-1}|c_l|
    \]
    for each $k$. Since $|c_{l+k}|<C\eta^{l+k}$ for all $k$,
    we must have $|c_l|=0$, therefore $c_l=0$.
    Since $l$ supposed to be an arbitrary positive integer
    exceeding all $\alpha_j$'s and $\beta_j$'s, we get that $Q$ is a finite sum.
    Now, the recurrence relation for $c_l$ and the assumption that $\alpha_i$'s are not integers,
    we know that $Q=0$.

    Next, we show the surjectivity.
    Given $P\in A_1(K)^{\dag}$, we have to show that there exists $Q, R\in A_1(K)^{\dag}$
    such that $xQ=P+R\Hyp_{\pi}(\balpha;\bbeta)$.
    We may assume that $P$ is of the form $P=\sum_{l=0}^{\infty}c_l\partial^{[l]}$,
    where $c_l$'s are elements of $K$ satisfying $\exists C>0, \exists\eta<1, \forall l, |c_l|<C\eta^l$.
    We show that there exists $R\in A_1(K)^{\dag}$ of the form $R=\sum_{d=0}^{\infty}d_l\partial^{[l]}$
    that satisfies $P+R\Hyp_{\pi}(\balpha;\bbeta)\in xA_1(K)^{\dag}$.
    Let $l_0$ be the maximum of the elements in $\Set{\beta_j+1 | j\in\{1,\ldots,n\}}\cap\bZ_{\geq 0}$ if
    this set is not empty; if it is empty, then let $l_0=0$.
    Then, we may assume that $c_l=0$ if $l<l_0$ by the following reason.

    If $A_1(K)$ denotes the usual Weyl algebra with coefficients in $K$,
    then by our assumption on the parameters,
    the right multiplication by $\Hyp_{\pi}(\balpha;\bbeta)$ is bijective on $A_1(K)/xA_1(K)$ 
    \cite[2.9.4, (3)$\Rightarrow$(2)]{Katz90ESDE}.
    This shows that there exists $R'\in A_1(K)$ such that
    $\sum_{l=0}^{l_0-1}c_l\partial^{[l]}+R'\Hyp_{\pi}(\balpha;\bbeta)\in xA_1(K)$
    (The proof in the reference \cite{Katz90ESDE} is given over $\bC$, but it remains valid for all field of characteristic $0$).
    Now, we assume that $c_l=0$ if $l<l_0$.

    We put $d_l=0$ if $l<l_0$, and for each $s\geq 0$ we put
    \begin{equation}
        d_{l_0+s} = \sum_{t=s}^{\infty}(-1)^{(t-s)(m+np+1)}\pi^{(t-s)(m-n)}\frac{\prod_{j=1}^n(l_0+t-1-\beta_j)\ldots(l_0+s-\beta_j)}{\prod_{i=1}^m(l_0+t-\alpha_i)\ldots(l_0+s-\alpha_i)}c_{l_0+t};
        \label{eq:defofd}
    \end{equation}
    This infinite series actually converges; in fact, Lemma \ref{lem:valofvp} shows that
$\big|(l_0+t-1-\beta_j)\ldots(l_0+s-\beta_j)\big|\leq p^{-(t-s)(p-1)+1}(t-s)$ and that
\[
    \big|(l_0+t-\alpha_i)\ldots(l_0+s-\alpha_i)\big|^{-1}\leq p^{(t-s-1)/(p-1)}(q-1)(l_0+t-\alpha_i),
\]
and therefore the norm of each summand in the left-hand side is bounded from above by
    \[
        p^{m/(p-1)+n}(q-1)^mC\cdot (t-s)^m\prod_{i=1}^m(l_0+t-\alpha_i)\cdot\eta^{l_0+t}.
    \]
    This converges to $0$ as $t\to\infty$, which shows that $d_{l_0+s}$ is well-defined.

    Now, we put $R=\sum_{l=0}^{\infty}d_l\partial^{[l]}$. Then, by the bound calculated above,
    $R$ is an element of $A_1(K)^{\dag}$.
    In fact, we have $|d_{l_0+s}|\leq C'\max\big\{(t-s)^m\prod_i(l_0+t-\alpha_i)\eta^{l_0+t}\big\}$
    for a constant $C'>0$,
    where the max is taken for $t\geq s$.
    By choosing $\eta'$ satisfying $\eta>\eta'>1$, we have
    $|d_{l_0+s}|\leq C'\eta'^{l_0+t}$ for sufficiently large $s$, which shows that $R\in A_1(K)^{\dag}$.

    Finally, we show that $R$ satisfies $P+R\Hyp_{\pi}(\balpha;\bbeta)\in xA_1(K)^{\dag}$.
    This is equivalent to showing that
    \[
        d_l\prod_{i=1}^m(l-\alpha_i)-(-1)^{(m+np)}\pi^{m-n}d_{l+1}\prod_{j=1}^n(l-\beta_j)+c_l=0
    \]
    for all $l\geq 0$.
    It trivially holds if $l<l_0-1$ because $d_l=d_{l+1}=c_l=0$ in this case;
    it also holds if $l=l_0-1$ because $d_l=c_l=0$ and $l-\beta_j=0$ for some $j$;
    otherwise, we may check it directly by using (\ref{eq:defofd}).

    This completes the proof of the surjectivity of $l_x$, and therefore of the proposition.
\end{proof}

\begin{lemma}
    Let $l, N$ be natural numbers that satisfies $l\leq N$, and let $\alpha$ be an element of $\frac{1}{q-1}\bZ$.
    Then, we have
    \[
        \left|\prod_{s=l}^N(s-\alpha)\right|\leq p^{-(N-l+1)/(p-1)+1}(N-l+1)
    \]
    If $\alpha<l$, then we also have
    \[
        \left|\prod_{s=l}^N(s-\alpha)\right|\geq p^{-(N-l+1)/(p-1)}(q-1)^{-1}(N-\alpha)^{-1}
    \]
    \label{lem:valofvp}
\end{lemma}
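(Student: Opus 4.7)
The plan is to clear denominators, writing $\alpha=a/(q-1)$ with $a\in\bZ$, so as to translate the estimate into a counting question for an arithmetic progression of integers with common difference coprime to $p$. Since $q-1$ is a unit in $\bZ_p$ (as $q$ is a power of $p$), this factor contributes nothing to $p$-adic absolute values:
\[
    \left|\prod_{s=l}^N(s-\alpha)\right| = \left|\prod_{s=l}^N\bigl(s(q-1)-a\bigr)\right|.
\]
Setting $M\defeq N-l+1$, the $M$ integers on the right form an arithmetic progression with common difference $q-1$, which is invertible modulo every power $p^k$. Consequently, for each $k\geq 1$, the number of indices $s\in[l,N]$ with $p^k\mid s(q-1)-a$ equals either $\lfloor M/p^k\rfloor$ or $\lfloor M/p^k\rfloor+1$, since it counts how many of $M$ consecutive integers lie in a fixed residue class modulo $p^k$.

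Writing $v_p$ for the $p$-adic valuation normalised by $v_p(p)=1$, the first inequality follows by summing these counts from below:
\[
    \sum_{s=l}^N v_p\bigl(s(q-1)-a\bigr)=\sum_{k\geq 1}\#\bigl\{s\in[l,N]:p^k\mid s(q-1)-a\bigr\}\geq\sum_{k\geq 1}\lfloor M/p^k\rfloor=v_p(M!),
\]
the last equality being Legendre's formula. The elementary bound that the sum of base-$p$ digits of $M$ is at most $(p-1)(\lfloor\log_p M\rfloor+1)$ yields $v_p(M!)\geq M/(p-1)-\lfloor\log_p M\rfloor-1$, and combining with $p^{\lfloor\log_p M\rfloor}\leq M$ gives the desired upper bound $p^{-M/(p-1)+1}M$ on the product.

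For the second inequality, the hypothesis $\alpha<l$ makes every factor $s-\alpha$ positive and at most $N-\alpha$, so each integer $s(q-1)-a$ is bounded by $(q-1)(N-\alpha)$ in ordinary absolute value. Hence any $k$ contributing a non-zero count must satisfy $p^k\leq(q-1)(N-\alpha)$, i.e., $k\leq K\defeq\lfloor\log_p((q-1)(N-\alpha))\rfloor$. Using the upper count $\lfloor M/p^k\rfloor+1$ together with $\sum_{k\geq 1}M/p^k=M/(p-1)$ yields
\[
    \sum_{s=l}^N v_p\bigl(s(q-1)-a\bigr)\leq \frac{M}{p-1}+K,
\]
which rearranges to the stated lower bound $p^{-M/(p-1)}(q-1)^{-1}(N-\alpha)^{-1}$. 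No serious obstacle is foreseen: the argument is essentially bookkeeping between floor functions and logarithms, built on the single decisive input that $q-1$ is a $p$-adic unit.
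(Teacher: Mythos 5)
Your proof is correct and follows essentially the same route as the paper's: both arguments count, for each $k\geq 1$, the number of factors divisible by $p^k$, bound that count between $\lfloor M/p^k\rfloor$ and $\lfloor M/p^k\rfloor+1$, and for the lower bound on the product truncate the sum at $k\leq\log_p\bigl((q-1)(N-\alpha)\bigr)$. The only cosmetic difference is that you clear denominators to work with the integer progression $s(q-1)-a$ and quote Legendre's formula, whereas the paper works directly with $s-\alpha\in p^m\bZ_{(p)}$ and states the floor-sum estimate by hand.
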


\begin{proof}
    Let $m$ be a positive integer, and let $t_m$ be the number of $(s-\alpha)$'s for $s=l,\ldots,N$ that belongs to $p^m\bZ_{(p)}$:
    \[
        t_m \defeq \#\Set{ s\in\{l,\ldots,N\} | s-\alpha\in p^m\bZ_{(p)} }.
    \]
    Then, if $v_p$ denotes the $p$-adic valuation so that $v_p(p)=1$, we have
        $v_p\left(\prod_{s=l}^N(s-\alpha)\right) = \sum_{m=1}^{\infty} t_m$.
    Because $\alpha\in\bZ_{(p)}$, there is exactly one multiple of $p^m$ in every $p^m$ successive $(s-\alpha)$'s,
    and as a result we have $\floor*{\frac{N-l+1}{p^m}} \leq t_m\leq \floor*{\frac{N-l+1}{p^m}}+1$.
    Moreover, in case $\alpha<l$,
    we have $t_m=0$ unless $p^m\leq(q-1)(N-\alpha)$;
    in fact, $s-\alpha$ (for $s=l,\ldots,N$) is a multiple of $p^m$ if and only if so is the integer $(q-1)(s-\alpha)$.
    Now, since
    \[
        \frac{N-l+1}{p-1}-\log_p(N-l+1)-1\leq \sum_{m=1}^{\infty}\floor*{\frac{N-l+1}{p^m}} \leq\frac{N-l+1}{p-1},
    \]
    $v_p\left(\prod_{s=l}^N(s-\alpha)\right)=\sum_{m=1}^{\infty}t_m$ satisfies
    \[
        \sum_{m=1}^{\infty}t_m\geq \sum_{m=1}^{\infty}\floor*{\frac{N-l+1}{p^m}}\geq \frac{N-l+1}{p-1}-\log_p(N-l+1)-1
    \]
    in general. Moreover, if $\alpha<l$, then by the discussion above we have
    \[
        \displaystyle\sum_{m=1}^{\infty}t_m=\sum_{m=1}^{\log_p(q-1)(N-\alpha)}t_m
    \]
    and it satisfies
    \[
        \sum_{m=1}^{\log_p(q-1)(N-\alpha)}t_m
        \leq \sum_{m=1}^{\infty}\floor*{\frac{N-l+1}{p^m}}+\log_p(q-1)(N-\alpha)\leq\frac{N-l+1}{p-1}+\log_p(q-1)(N-\alpha).
    \]
    This shows the assertion.
\end{proof}

\subsection{Arithmetic hypergeometric $\sD$-modules and convolution}

In this section, firstly, we give another construction of arithmetic hypergeometric $\sD$-modules.
Under this definition, these $\sD$-modules are overholonomic and have Frobenius structures by nature.
Secondly, we compare these $\sD$-modules with the ones given in the previous subsection.
The comparison is the main part of this article.

\emph{In the remaining part of this article, we always assume that $K$ has a primitive $p$-th root of unity.}

\begin{definition}
    \label{def:defofh}
	Let $\psi$ be a non-trivial additive character on $k$, and
	let $\chi_1,\dots,\chi_m$ and $\rho_1,\dots,\rho_n$ be multiplicative characters on $k^{\times}$.
    Let $j\colon\bG_{\rmm,k}\hookrightarrow\bA^1_k$ denote the inclusion.
    We denote the sequence $\chi_1,\ldots,\chi_m$ by $\bchi$ and $\rho_1,\ldots,\rho_n$ by $\brho$; the empty sequence is denoted by $\emptyset$.
    Then, we define an object $\sHyp_{\psi,\plus}(\bchi;\brho)$ of $F\hyphen D_{\ovhol}^{\rb}(\bG_{\rmm,k}/K)$ as follows.
    
    (i) If $(m,n)=(0,0)$, then $\sHyp_{\psi,\plus}(\emptyset;\emptyset)\defeq \delta_1$.

    (ii) If $(m,n)=(1,0)$, then $\sHyp_{\psi,\plus}(\chi_1;\emptyset)\defeq j^{\ast}\sL_{\psi}\totimes\sK_{\chi_1}[2]$.

	(iii) If $(m,n)=(0,1)$, then $\sHyp_{\pi,\plus}(\emptyset;\rho_1)\defeq\inv^{\ast}\big(j^{\ast}\sL_{\psi^{-1}}\totimes\sK_{\rho_1^{-1}}\big)[2]$.

    (iv) Otherwise, $\sHyp_{\psi,\plus}(\bchi;\brho)$ is defined by
    \[
        \sHyp_{\psi,\plus}(\chi_1;\emptyset)\ast_{\plus}\ldots\ast_{\plus}\sHyp_{\psi,\plus}(\chi_m;\emptyset)
        \ast_{\plus} \sHyp_{\psi,\plus}(\emptyset;\rho_1)\ast_{\plus}\ldots\ast_{\plus}\sHyp_{\psi,\plus}(\emptyset;\rho_n).
    \]

    We also define an object $\sHyp_{\psi,!}(\bchi;\brho)$ of $F\hyphen D_{\ovhol}^{\rb}(\bG_{\rmm,k}/K)$
    in a similar way.
    Namely, we put $\sHyp_{\psi,!}(\bchi;\brho)\defeq\sHyp_{\psi,\plus}(\bchi;\brho)$ if $(m,n)\in\big\{(0,0), (1,0), (0,1)\big\}$, and
    otherwise $\sHyp_{\psi,!}(\bchi;\brho)$ is defined by (iv) above but $\ast_{\plus}$'s are replaced by $\ast_{!}$.
\end{definition}

\begin{remark}
    \label{rem:hypandh}
    If $(m,n)\in\big\{(0,0),(1,0),(0,1)\big\}$, then we immediately get a concrete description of $\sHyp_{\psi,\plus}(\bchi;\brho)$ (and therefore of $\sHyp_{\psi,!}(\bchi;\brho)$) as a $B_1(K)^{\dag}$-module:
    \begin{align*}
        \sHyp_{\psi,\plus}(\emptyset;\emptyset) & = B_1(K)^{\dag}/B_1(K)^{\dag}(1-x),\\
        \sHyp_{\psi,\plus}(\chi_1 ;\emptyset) & = \big(B_1(K)^{\dag}/B_1(K)^{\dag}(x\partial-\alpha_{\chi_1}+\pi_{\psi} x)\big)[1], \text{ and}\\
        \sHyp_{\psi,\plus}(\emptyset;\rho_1) & = \big(B_1(K)^{\dag}/B_1(K)^{\dag}(1-(-1)^p\pi_{\psi}^{-1}x(x\partial-\alpha_{\rho_1}))\big)[1].
    \end{align*}
    Here, $\alpha_{\chi_1}$ \resp{$\alpha_{\rho_1}$} is an element of $\frac{1}{q-1}\bZ$ satisfying
    $\chi_1(\xi)=\widetilde{\xi}^{(q-1)\alpha_{\chi_1}}$ \resp{$\rho_1(\xi)=\widetilde{\xi}^{(q-1)\alpha_{\rho_1}}$}.
    The first equation follows from the definition of the delta module,
    and the other two equations can be proved by the same calculation as we did in Lemma \ref{lem:calcofhyp}.
\end{remark}

This remark shows that, for these $(m,n)$'s, then $\sHyp_{\psi,\plus}(\bchi;\brho)$ and $\sHyp_{\psi,!}(\bchi;\brho)$ are
isomorphic to $\sH_{\pi_{\psi}}(\balpha;\bbeta)$ as $B_1(K)^{\dag}$-modules modulo a shift of degree.
The aim of this section is to generalize this fact to the case where $m$ and $n$ are larger.

Before doing that, we prove the similar formula for $\sHyp_{\psi}$'s as Lemma \ref{lem:calcofhyp}.

\begin{lemma}
    Under the notation in Definition \ref{def:defofh}, assume that $(m,n)\neq(0,0)$.
    Then, $\sHyp_{\psi,?}(\bchi;\brho)$ has the following properties where $?$ denotes $\plus$ or $!$.
    \label{lem:calcofh}
    \begin{itemize}
        \item[(i)] $\inv^{\ast}\sHyp_{\psi,?}(\bchi;\brho)$ is isomorphic to $\sHyp_{\psi^{-1},?}(\brho^{-1};\bchi^{-1})$, where $\brho^{-1}$ \resp{$\bchi^{-1}$} denotes the sequence $\rho_1^{-1},\dots,\rho_m^{-1}$ \resp{$\chi_1^{-1},\dots,\chi_n^{-1}$}.
        \item[(ii)] Let $\gamma$ be a character on $k^{\times}$. If $(m,n)\neq(0,0)$,
            then $\sHyp_{\psi,?}(\bchi;\brho)\totimes(\sK_{\gamma}[1])$
            is isomorphic to $\sHyp_{\psi,?}(\bchi\gamma;\brho\gamma)$,
            where $\bchi\gamma$ \resp{$\brho\gamma$} denotes the sequence $\chi_1\gamma,\dots,\chi_m\gamma$ \resp{$\rho_1\gamma,\dots,\rho_n\gamma$}.
    \end{itemize}
\end{lemma}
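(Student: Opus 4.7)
The plan is to proceed by induction on $m+n$, reducing to the base cases $(m,n)\in\{(1,0),(0,1)\}$ where $\sHyp_{\psi,?}$ admits the explicit presentation of Remark \ref{rem:hypandh}. In these cases both claims follow by direct computation using $\inv^{\ast}\sK_{\gamma}\cong\sK_{\gamma^{-1}}$, the compatibility of $\inv^{\ast}$ with $\totimes$, and the identity $\sK_{\alpha}[2]\totimes\sK_{\beta}[1]\cong\sK_{\alpha+\beta}[2]$ (which follows from $\sK_{\alpha}\otimes_{\sO}\sK_{\beta}\cong\sK_{\alpha+\beta}$ together with the $[-\dim\sP]=[-1]$ shift built into $\totimes$ on $\widehat{\bP^{1}_{V}}$).

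For the inductive step of (i), the key observation is that $\inv^{\ast}$ distributes over convolution. Since $\mu\circ(\inv\times\inv)=\inv\circ\mu$ and all maps involved are isomorphisms, the base change theorem yields $\inv^{\ast}\circ\mu_{?}\cong\mu_{?}\circ(\inv\times\inv)^{\ast}$ for $?\in\{\plus,!\}$. Combined with the isomorphism $(\inv\times\inv)^{\ast}(\sM\boxtimes\sN)\cong(\inv^{\ast}\sM)\boxtimes(\inv^{\ast}\sN)$, which follows from $\pr_{i}\circ(\inv\times\inv)=\inv\circ\pr_{i}$ and the compatibility of $!$-pullback with $\totimes$, one obtains $\inv^{\ast}(\sM\ast_{?}\sN)\cong(\inv^{\ast}\sM)\ast_{?}(\inv^{\ast}\sN)$. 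Applying this iteratively to the $(m+n)$-fold convolution defining $\sHyp_{\psi,?}(\bchi;\brho)$, invoking the base case on each factor, and rearranging by commutativity of $\ast_{?}$ produces $\sHyp_{\psi^{-1},?}(\brho^{-1};\bchi^{-1})$.

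For the inductive step of (ii), the essential input is that $\sK_{\gamma}$ is a character sheaf for multiplication, in the precise form $\mu^{!}(\sK_{\gamma}[1])\cong(\sK_{\gamma}[1])\boxtimes(\sK_{\gamma}[1])$. Combining this with the projection formula for $\mu$ (available for $\mu_{\plus}$ by \cite[A.6]{AbeCaro} and derived for $\mu_{!}$ via the duality definition $\mu_{!}=\bD_{\bG_{\rmm,k}}\circ\mu_{\plus}\circ\bD_{\bG_{\rmm,k}\times\bG_{\rmm,k}}$) gives the crucial distribution law
\[
   (\sM\ast_{?}\sN)\totimes(\sK_{\gamma}[1])\cong\bigl(\sM\totimes(\sK_{\gamma}[1])\bigr)\ast_{?}\bigl(\sN\totimes(\sK_{\gamma}[1])\bigr).
\]
Iterating this across the $(m+n)$-fold convolution and applying the base case to each factor concludes the proof.

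The main obstacle is the careful bookkeeping of degree shifts, particularly verifying the character-sheaf identity $\mu^{!}(\sK_{\gamma}[1])\cong(\sK_{\gamma}[1])\boxtimes(\sK_{\gamma}[1])$ \emph{on the nose}: the $[1]$'s appearing in the statement must be exactly balanced against the $[-\dim\sP]$ shift built into $\totimes$ and against the relative-dimension shifts relating $\mu^{!}$ and $\pr_{i}^{!}$ to the naive pullbacks along these smooth morphisms of relative dimension one. Once this identity is in hand, both (i) and (ii) become formal consequences of base change, the projection formula, and the base-case computations.
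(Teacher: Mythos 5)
Your proposal is correct and follows essentially the same route as the paper: explicit verification in the base cases $(m,n)\in\{(1,0),(0,1)\}$ via the presentation of Remark \ref{rem:hypandh} and the identity $\sK_{\rho}\totimes\sK_{\chi}[1]\cong\sK_{\rho\chi}$, then the distribution of $\inv^{\ast}$ over $\ast_{?}$ for (i), and the character-sheaf identity $\mu^{!}\sK_{\gamma}\cong\sK_{\gamma}\boxtimes\sK_{\gamma}[1]$ combined with the projection formula for (ii). The shift bookkeeping you flag as the main obstacle is exactly what the paper's two-line computation of $(\sM\totimes\sK_{\gamma}[1])\ast_{\plus}(\sN\totimes\sK_{\gamma}[1])$ carries out.
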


\begin{proof}
    (i) 
    If $(m,n)=(1,0)$ or $(m,n)=(0,1)$, then the assertion is obvious by definition.
    Otherwise, it follows from the formula $\inv^{\ast}(\sM\ast_{?}\sN)\cong(\inv^{\ast}\sM)\ast_{?}(\inv^{\ast}\sN)$, whose proof is immediate.

    (ii) 
    If $(m,n)=(1,0)$, then the claim directly follows from the formula $\sK_{\rho}\totimes\sK_{\chi}[1]\cong
    \big(\sK_{\rho}\otimes^{\dag}_{\sO(\pdag\{0,\infty\})}\sK_{\chi}[1]\big)[-1]\cong\sK_{\rho\chi}$.
    If $(m,n)=(0,1)$, then it suffices to use this formula and the formula $\inv^{\ast}\sK_{\chi^{-1}}\cong\sK_{\chi}$ twice.

    In order to prove the assertion in the general case,
    we first note that $\mu^!\sK_{\gamma}\cong\sK_{\gamma}\boxtimes\sK_{\gamma}[1]$.
    Therefore, for arbitrary objects $\sM$ and $\sN$ in $F\hyphen D^{\rb}_{\ovhol}(\bG_{\rmm}/K)$, we have
    \begin{align*}
        (\sM\totimes\sK_{\gamma}[1])\ast_{\plus}(\sN\totimes\sK_{\gamma}[1]) &\cong\mu_{\plus}\big((\sM\boxtimes\sN)\totimes(\sK_{\gamma}\boxtimes\sK_{\gamma})\big)[2]\\
        & \cong (\sM\ast_{\plus}\sN)\otimes\sK_{\gamma}[1].
    \end{align*}
    This fact and the assertion for $(m,n)=(1,0),(0,1)$ finish the proof.
\end{proof}

\begin{lemma}
    Let $\psi$ be a non-trivial additive character on $k$,
    and let $\chi_1,\dots,\chi_m$ and $\rho_1,\dots,\rho_n$ be characters on $k^{\times}$.
    Let $k'$ be a finite extension of $k$.
    We put $\psi'\defeq\psi\circ\Tr_{k'/k}$, $\chi'_i\defeq\chi_i\circ\Norm_{k'/k}$ and $\rho'_j\defeq\rho_j\circ\Norm_{k'/k}$.
    Moreover, we denote by $\bchi$ \resp{$\brho$} the sequence $\chi_1,\dots,\chi_m$ \resp{$\rho_1,\dots,\rho_n$}
    and by $\bchi'$ \resp{$\brho'$} the sequence $\chi'_1,\dots,\chi'_m$ \resp{$\rho'_1,\dots,\rho'_n$}.
    Then, we have
    \[
        \iota_{k'/k}\big(\sHyp_{\psi,\plus}(\bchi,\brho)\big)=\sHyp_{\psi',\plus}(\bchi',\brho')
    \]
    and
    \[
        \iota_{k'/k}\big(\sHyp_{\psi,!}(\bchi,\brho)\big)=\sHyp_{\psi',\plus}(\bchi',\brho').
    \]
    \label{lem:scalarextofhyp}
\end{lemma}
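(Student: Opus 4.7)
The plan is to exploit the general principle, already recorded in the excerpt, that the scalar extension functor $\iota_{k'/k}$ commutes with all six operations, together with the base cases already provided for the Dwork and Kummer modules.

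First I would reduce the general case $(m,n)$ to the three basic cases $(0,0), (1,0), (0,1)$ in Definition \ref{def:defofh}. Since $\sHyp_{\psi,\plus}(\bchi;\brho)$ (respectively $\sHyp_{\psi,!}(\bchi;\brho)$) is obtained from the building blocks by iterated convolutions $\mu_\plus(-\boxtimes -)$ (respectively $\mu_!(-\boxtimes -)$), and because $\iota_{k'/k}$ commutes with $\mu_\plus$, $\mu_!$, and $\boxtimes$, we obtain an isomorphism
\[
\iota_{k'/k}\bigl(\sM_1\ast_?\sM_2\bigr)\cong \iota_{k'/k}(\sM_1)\ast_?\iota_{k'/k}(\sM_2)
\]
for $?\in\{\plus,!\}$; iterating reduces the lemma to the statement that $\iota_{k'/k}$ sends each building block over $(\psi,\chi_i,\rho_j)$ to the corresponding building block over $(\psi',\chi'_i,\rho'_j)$.

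Next I would handle the three basic cases. For $(0,0)$, $\sHyp_{\psi,\plus}(\emptyset;\emptyset)=\delta_1=\iota_{1,\plus}(\sO_{\{1\},\bQ})$, and since $\iota_{k'/k}$ commutes with push-forwards and sends $\sO_{\{1\},\bQ}$ to $\sO_{\{1\},\bQ'}$, the claim is immediate. For $(1,0)$, recall $\sHyp_{\psi,\plus}(\chi_1;\emptyset)=j^{\ast}\sL_{\psi}\totimes \sK_{\chi_1}[2]$. Using commutation of $\iota_{k'/k}$ with $j^{\ast}$ and $\totimes$, together with Proposition \ref{prop:propertiesofDwork}(ii) (giving $\iota_{k'/k}(\sL_{\psi})\cong \sL_{\psi'}$) and Proposition \ref{prop:propertiesofKummer}(ii) (giving $\iota_{k'/k}(\sK_{\chi_1})\cong \sK_{\chi'_1}$), we obtain the desired isomorphism. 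The case $(0,1)$ is analogous, additionally invoking the commutation of $\iota_{k'/k}$ with $\inv^{\ast}$ and the identity $\psi^{-1}\circ\Tr_{k'/k}=(\psi\circ\Tr_{k'/k})^{-1}$, and similarly for $\rho_1^{-1}$ under $\Norm_{k'/k}$.

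There is essentially no serious obstacle: the argument is formal, and the only non-tautological inputs are Propositions \ref{prop:propertiesofDwork}(ii) and \ref{prop:propertiesofKummer}(ii) on the Dwork and Kummer modules, which are already stated. The only minor care needed is to track that the Frobenius structures (not merely the underlying $\sD$-modules) are compatibly identified, but this was built into the construction of $\iota_{k'/k}$ in the excerpt via the composite $\Phi''=(F^{(s'-s)}_{X'})^{\ast}\Phi'\circ\cdots\circ\Phi'$, and it commutes with $\mu_\plus$, $\mu_!$ and $\boxtimes$ by the last sentence before the paragraph on Frobenius trace functions.
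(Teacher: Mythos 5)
Your proposal is correct and follows exactly the paper's own argument: reduce via the commutation of $\iota_{k'/k}$ with the six operations (hence with $\ast_{\plus}$ and $\ast_!$) to the basic cases, which are handled by the scalar-extension statements for the Dwork and Kummer modules in Propositions \ref{prop:propertiesofDwork}(ii) and \ref{prop:propertiesofKummer}(ii). The paper's proof is just a terser version of the same reasoning.
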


\begin{proof}
    If $(m,n)=(0,0)$, the assertion is obvious.
    If $(m,n)\neq (0,0)$, then it follows from the description of scalar extension of Dwork modules and 
    of Kummer modules (Subsection 1.3)
    and from the fact that the scalar extension commutes with the six-functor formalism.
\end{proof}

\begin{theorem}
    Let $\psi$ be a non-trivial additive character on $k$, and
    let $\chi_1,\dots,\chi_m$, $\rho_1,\ldots,\rho_n$ be characters on $k^{\times}$.
    Let $\balpha$ \resp{$\bbeta$} denote the sequence
    $\alpha_1,\dots,\alpha_m$ \resp{$\beta_1,\dots,\beta_n$},
    where $\alpha_i$ \resp{$\beta_j$} is an element of $\frac{1}{q-1}\bZ$ that satisfies
    $\chi_i(\xi)=\widetilde{\xi}^{(q-1)\alpha_i}$ \resp{$\rho_j(\xi)=\widetilde{\xi}^{(q-1)\beta_j}$}. 
    We assume that $(m,n)\neq(0,0)$ and that $\chi_i\neq\rho_j$ for any $i, j$.
    Then, we have an isomorphism
    \[
        \sHyp_{\psi,\plus}(\bchi;\brho) \cong \sH_{\pi_{\psi}}(\balpha;\bbeta)[m+n].
    \]
    as $\sD^{\dag}_{\widehat{\bP^1_V},\bQ}(\pdag\{0,\infty\})$-modules.
    \label{thm:hypandconv}
\end{theorem}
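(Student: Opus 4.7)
The strategy is to induct on $m+n$, using Proposition \ref{prop:fourierandconv} to rewrite each convolution as a $p$-adic Fourier transform on $\bA^1$, then Huyghe's explicit description (Proposition \ref{prop:globalsectionofft}) to compute it symbolically as a change of variables in $A_1(K)^{\dag}$, with Proposition \ref{prop:connectiontype}(ii) bridging the $\bG_{\rmm}$-side (where convolutions are defined) and the $\bA^1$-side (where Huyghe's formula applies).

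The base cases $(m,n)\in\{(1,0),(0,1)\}$ are verified by Remark \ref{rem:hypandh}. For the inductive step, after possibly symmetrizing via Lemma \ref{lem:calcofh}(i), I factor off a single $\sHyp_{\psi,\plus}(\chi_m;\emptyset)=j^{\ast}\sL_{\psi}\totimes\sK_{\chi_m}[2]$. Since $\sK_{\rho}\totimes\sK_{\chi}[1]\cong\sK_{\rho\chi}$, the object $\sK_{\chi_m}[1]$ is $\totimes$-invertible (with inverse $\sK_{\chi_m^{-1}}[1]$), and the identity established inside the proof of Lemma \ref{lem:calcofh}(ii) lets one transfer the Kummer factor across the convolution. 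This reduces matters to computing the convolution of the inductively-known module $\sHyp_{\psi,\plus}(\chi_1,\ldots,\chi_{m-1};\brho)\totimes\sK_{\chi_m^{-1}}[1]$ with $j^{\ast}\sL_{\psi}$, followed by reintroducing $\otimes\sK_{\chi_m}[1]$ at the end; by Proposition \ref{prop:fourierandconv}, this convolution is $j^{\ast}\FT_{\psi}\big(j_{\plus}\inv^{\ast}(\,\cdot\,)\big)$ up to a degree shift.

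By the inductive hypothesis together with Lemma \ref{lem:calcofhyp}(ii), the first factor becomes (up to shift) the $\bG_{\rmm}$-hypergeometric module $\sH_{\pi_{\psi}}(\alpha_1-\alpha_m,\ldots,\alpha_{m-1}-\alpha_m;\,\beta_1-\alpha_m,\ldots,\beta_n-\alpha_m)$. Lemma \ref{lem:calcofhyp}(i) identifies $\inv^{\ast}$ of this with $\sH_{(-1)^p\pi_{\psi}}(\alpha_m-\bbeta;\,\alpha_m-\alpha_1,\ldots,\alpha_m-\alpha_{m-1})$. The hypothesis $\alpha_i-\beta_j\notin\bZ$ implies $\alpha_m-\beta_j\notin\bZ$, so Proposition \ref{prop:connectiontype}(ii) lifts $j_{\plus}$ of this module to the $A_1(K)^{\dag}$-quotient by the corresponding hypergeometric operator on $\bA^1$. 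Now apply Proposition \ref{prop:globalsectionofft}: the Fourier transform becomes the ring automorphism $\varphi_{\pi_{\psi}}$ sending $x\mapsto-\partial/\pi_{\psi}$, $\partial\mapsto\pi_{\psi}x$, under which $x\partial-c\mapsto-(x\partial+c+1)$. A direct symbolic computation then shows that $\varphi_{\pi_{\psi}}$ carries $\Hyp_{(-1)^p\pi_{\psi}}(\alpha_m-\bbeta;\,\alpha_m-\alpha_1,\ldots,\alpha_m-\alpha_{m-1})$ to a left-ideal generator agreeing with $\Hyp_{\pi_{\psi}}(\balpha-\alpha_m;\,\bbeta-\alpha_m)$.

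Finally, pulling back along $j^{\ast}$ via Proposition \ref{prop:connectiontype}(i) and reintroducing $\otimes\sK_{\chi_m}[1]$ (which, by Lemma \ref{lem:calcofhyp}(ii), shifts all parameters back by $+\alpha_m$) yields $\sH_{\pi_{\psi}}(\balpha;\bbeta)$, with the expected overall degree shift $[m+n]$ once all the intermediate shifts are tallied. The main obstacle is the explicit $p$-adic symbolic calculation of $\varphi_{\pi_{\psi}}(\Hyp)$: one must carefully chase the signs $(-1)^{m+np}$, the powers $\pi_{\psi}^{m-n}$ (which change sign in the exponent when $m$ and $n$ swap roles under the Fourier transform), and the commutator correction $\partial x=x\partial+1$ in $A_1(K)^{\dag}$, then verify that the resulting operator generates the expected left ideal up to a unit. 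A secondary technicality is ensuring that the non-integrality hypothesis $\alpha_i-\beta_j\notin\bZ$ survives each inductive step so that Proposition \ref{prop:connectiontype}(ii) remains applicable throughout.
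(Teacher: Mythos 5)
Your strategy is the same as the paper's: induction on $m+n$, normalizing away one Kummer factor, converting the remaining convolution with $j^{\ast}\sL_{\psi}$ into a geometric Fourier transform via Proposition \ref{prop:fourierandconv}, passing between $B_1(K)^{\dag}$ and $A_1(K)^{\dag}$ with Proposition \ref{prop:connectiontype}, and computing the transform with Proposition \ref{prop:globalsectionofft}. The only structural difference --- carrying the twist by $\sK_{\chi_m^{-1}}[1]$ through the convolution rather than normalizing $\chi_1=\boldsymbol{1}$ at the outset --- is cosmetic.

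There is, however, a concrete error at the key computational step. Since $\varphi_{\pi_{\psi}}(x\partial)=-\partial x=-(x\partial+1)$, one has $\varphi_{\pi_{\psi}}(x\partial-c)=-\bigl(x\partial-(-c-1)\bigr)$; so applying $\varphi_{\pi_{\psi}}$ to $\Hyp_{(-1)^p\pi_{\psi}}(\alpha_m-\bbeta;\,\alpha_m-\alpha_1,\dots,\alpha_m-\alpha_{m-1})$ and clearing units and a factor of $x$ yields the operator $\Hyp_{\pi_{\psi}}(0,\alpha_1-\alpha_m-1,\dots,\alpha_{m-1}-\alpha_m-1;\,\bbeta-\alpha_m-1)$, not $\Hyp_{\pi_{\psi}}(\balpha-\alpha_m;\bbeta-\alpha_m)$ as you assert: every parameter except the distinguished $0$ comes out translated by $-1$. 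The paper gives no independent proof that $\sH_{\pi}$ is unchanged when individual parameters are shifted by integers (that invariance is essentially part of what the theorem asserts), so your induction does not close as written. The repair is exactly the choice the paper makes: because the inductive hypothesis holds for \emph{every} choice of representatives in $\frac{1}{q-1}\bZ$ of the given characters, you must invoke it for the convolvend with representatives increased by $1$, so that the $+1$ produced by $\varphi_{\pi_{\psi}}(x\partial)=-x\partial-1$ is cancelled; in the paper's normalization this is precisely the appearance of $\Hyp_{\pi_{\psi^{-1}}}(-\bbeta-1;-\balpha'-1)$ rather than $\Hyp_{\pi_{\psi^{-1}}}(-\bbeta;-\balpha')$. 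With that adjustment your argument coincides with the paper's.
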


\begin{proof}
    We prove it by induction on $m+n$.
    The proof in the case $(m,n)=(1,0), (0,1)$ is already explained in Remark \ref{rem:hypandh}.

    First, by Lemma \ref{lem:calcofhyp} (i) and Lemma \ref{lem:calcofh} (i), we may assume that $m>0$.
    Moreover, by Lemma \ref{lem:calcofhyp} (ii) and Lemma \ref{lem:calcofh} (ii), we may assume that $\chi_1=\boldsymbol{1}$ and $\alpha_1=0$.
    Now, note that $\sHyp_{\psi,\plus}(\boldsymbol{1};\emptyset)=j^{\ast}\sL_{\pi}\totimes\sK_{\boldsymbol{1}}[2]=j^{\ast}\sL_{\pi}[1]$.
    It therefore suffices to show the isomorphism $\sHyp_{\psi,\plus}(\bchi';\brho)\ast_{\plus} (j^{\ast}\sL_{\psi}[1])\cong \sH_{\pi_{\psi}}(\balpha;\bbeta)[1]$ of $B_1(K)^{\dag}$-modules,
    where $\bchi'$ denotes the sequence $\chi_2,\ldots,\chi_m$.

    Firstly, Proposition \ref{prop:fourierandconv} shows that 
    \[
        \sHyp_{\psi,\plus}(\bchi';\brho)\ast_{\plus}(j^{\ast}\sL_{\psi}[1])
        \cong j^{\ast}\big(\FT_{\psi}(j_{\plus}\inv^{\ast}\sHyp_{\psi,\plus}(\bchi';\brho))\big).
    \]
    By Lemma \ref{lem:calcofh} (i), we have
    \[
        \inv^{\ast}\sHyp_{\psi,\plus}(\bchi';\brho) \cong\sHyp_{\psi^{-1},\plus}(\brho^{-1};\bchi'^{-1}),
    \]
    and by induction hypothesis, it is isomorphic to
    \[
        B_1(K)^{\dag}/B_1(K)^{\dag}\Hyp_{\pi_{\psi^{-1}}}(-\bbeta-1;-\balpha'-1)[m+n-1]
    \]
    as a $B_1(K)^{\dag}$-module.
    Moreover, by Proposition \ref{prop:connectiontype} (i), it is isomorphic to
    \[
        \widetilde{j}^{\ast}\left(A_1(K)^{\dag}/A_1(K)^{\dag}\Hyp_{\pi_{\psi^{-1}}}(-\bbeta-1;-\balpha'-1)\right)[m+n-1].
    \]
    Since there are no integers in the sequence $-\bbeta-1$ by our assumption, we conclude by using Proposition \ref{prop:connectiontype} (ii) that 
    \[
        j_{\plus}\inv^{\ast}\sHyp_{\psi,\plus}(\bchi';\brho)[m+n]\cong
        A_1(K)^{\dag}/A_1(K)^{\dag} \Hyp_{\pi_{\psi^{-1}}}(-\bbeta-1;-\balpha'-1)[m+n-1].
    \]
    Now, we may compute the Fourier transform of this object by using Proposition \ref{prop:geomftandnaiveft}
    and the formula $\pi_{\psi^{-1}}=(-1)^p\pi_{\psi}$, and the result is $A_1(K)^{\dag}/A_1(K)^{\dag}\Hyp_{\pi_{\psi}}(\balpha;\bbeta)[m+n]$.
    In fact, since under the notation in Proposition \ref{prop:globalsectionofft} we have $\varphi_{\pi_{\psi}}(x\partial)=-\partial x=-x \partial - 1$,
    \begin{align*}
        & \varphi_{\pi_{\psi}}\Hyp_{\pi_{\psi^{-1}}}(-\bbeta-1;-\balpha'-1) \\
        = & \varphi_{\pi_\psi}\left(\prod_{j=1}^n(x\partial+\beta_j+1)-(-1)^{n+(m-1)p}\big((-1)^p\pi_{\psi}\big)^{n-m+1}x\prod_{i=2}^m(x\partial+\alpha_i+1)\right)\\
        = &  \prod_{j=1}^n(-x\partial+\beta_j)-(-1)^{n(p-1)}\pi_{\psi}^{n-m+1}\frac{-\partial}{\pi_{\psi}}\prod_{i=2}^m( -x\partial+\alpha_i)\\
        = & \frac{(-1)^{n(p-1)+m-1}}{\pi_{\psi}^{m-n}x}\left\{x\partial\prod_{i=2}^m(x\partial-\alpha_i)-(-1)^{m+np}\pi_{\psi}^{m-n}\prod_{j=1}^n(x\partial-\beta_j)\right\}.
    \end{align*}
    Again by using Proposition \ref{prop:connectiontype} (i), it shows the assertion.
\end{proof}

\begin{corollary}
    Let $\pi$ be an element of $K$ that satisfies $\pi^{q-1}=(-p)^{(q-1)/(p-1)}$,
    and let $\alpha_1,\dots,\alpha_m,\beta_1,\dots,\beta_n$ be elements of $\frac{1}{q-1}\bZ$
    that satisfy $\alpha_i-\beta_j\not\in\bZ$ for any $i,j$.
    Then, the $B_1(K)^{\dag}$-module $\sH_{\pi}(\balpha; \bbeta)$ and
    the $A_1(K)^{\dag}$-module $A_1(K)^{\dag}/\Hyp_{\pi}(\balpha;\bbeta)A_1(K)^{\dag}$ are overholonomic and have a Frobenius structure.
    \label{cor:overholonomic}
\end{corollary}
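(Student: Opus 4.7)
The plan is to identify the modules in question with objects built from the convolution-based construction, where overholonomicity and the Frobenius structure come for free. Since $\pi^{q-1}=(-p)^{(q-1)/(p-1)}$, we may write $\pi=\pi_\psi$ for some non-trivial additive character $\psi$ on $k$ (as recalled in Subsection 1.3). Each $\alpha_i\in\frac{1}{q-1}\bZ$ corresponds to a multiplicative character $\chi_i$ on $k^\times$ via $\chi_i(\xi)=\widetilde{\xi}^{(q-1)\alpha_i}$, and similarly each $\beta_j$ to a $\rho_j$; the hypothesis $\alpha_i-\beta_j\not\in\bZ$ translates to $\chi_i\neq\rho_j$ for all $i,j$.

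For the $B_1(K)^\dag$-module case: when $(m,n)=(0,0)$ we have $\Hyp_\pi(\emptyset;\emptyset)=1-x$, so $\sH_\pi(\emptyset;\emptyset)\cong\delta_1$ is manifestly overholonomic with a Frobenius structure. When $(m,n)\neq(0,0)$, Theorem \ref{thm:hypandconv} yields $\sHyp_{\psi,\plus}(\bchi;\brho)\cong\sH_\pi(\balpha;\bbeta)[m+n]$. The left-hand side is defined in Definition \ref{def:defofh} as an iterated multiplicative convolution of Dwork-type and Kummer-type building blocks, hence lies in $F\hyphen D^\rb_\ovhol(\bG_{\rmm,k}/K)$, since convolution is constructed from the six functors, which preserve both overholonomicity and Frobenius structures. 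The shift $[m+n]$ is harmless and gives the desired conclusion for $\sH_\pi(\balpha;\bbeta)$.

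For the $A_1(K)^\dag$-module $\sM:=A_1(K)^\dag/A_1(K)^\dag\Hyp_\pi(\balpha;\bbeta)$, the assumption $\alpha_i-\beta_j\not\in\bZ$ forbids both sequences from containing integers simultaneously. When every $\alpha_i$ lies in $\frac{1}{q-1}\bZ\setminus\bZ$, Proposition \ref{prop:connectiontype}(i,ii) identifies $\sM\cong\widetilde{j}_\plus\sH_\pi(\balpha;\bbeta)$; since $\widetilde{j}_\plus$ is one of the six functors and preserves $F\hyphen D^\rb_\ovhol$, the overholonomicity and Frobenius structure established for $\sH_\pi(\balpha;\bbeta)$ transfer to $\sM$.

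The main anticipated obstacle is the remaining case in which some $\alpha_i\in\bZ$ (which forces every $\beta_j\notin\bZ$ by the hypothesis): Proposition \ref{prop:connectiontype}(ii) no longer applies as stated, and one must either establish its symmetric analogue at $\infty$---using the inversion identity of Lemma \ref{lem:calcofhyp}(i) to interchange the roles of $\balpha$ and $\bbeta$ and reduce to the previous case---or pass to a sufficiently large finite extension $k'/k$ with $q'-1>m+n$, shift the parameters away from the integers by a suitable $\gamma\in\frac{1}{q'-1}\bZ$ via Lemma \ref{lem:calcofhyp}(ii), deduce the result over $K'$, and then descend along the scalar-extension functor $\iota_{k'/k}$, which commutes with the six operations.
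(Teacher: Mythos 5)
Your main line is exactly the paper's: for $(m,n)\neq(0,0)$ the statement for $\sH_{\pi}(\balpha;\bbeta)$ follows from Theorem \ref{thm:hypandconv}, since $\sHyp_{\psi,\plus}(\bchi;\brho)$ is built from Dwork and Kummer modules by six-functor operations and therefore lies in $F\hyphen D^{\rb}_{\ovhol}(\bG_{\rmm,k}/K)$ with its Frobenius structure, and the statement for the $A_1(K)^{\dag}$-module follows by combining this with Proposition \ref{prop:connectiontype} (i), (ii), which exhibits it as $\widetilde{j}_{\plus}$ of the $\bG_{\rmm}$-module. Your explicit treatment of $(m,n)=(0,0)$ (excluded from Theorem \ref{thm:hypandconv}) is a small but legitimate addition.

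You are right to flag that Proposition \ref{prop:connectiontype} (ii) assumes $\alpha_i\in\frac{1}{q-1}\bZ\setminus\bZ$, which is stronger than the corollary's hypothesis $\alpha_i-\beta_j\not\in\bZ$; the paper's one-line proof invokes (ii) without comment, and in all of its later uses of the corollary (the induction in Theorem \ref{thm:hypandconv} and Proposition \ref{prop:connectiontype2}) the first-slot parameters are indeed non-integral. However, neither of your two repairs for the case where some $\alpha_i\in\bZ$ works as written. For (a): $\widetilde{\inv}$ carries the d-couple $(\widehat{\bP^1_V},\{\infty\})$ to $(\widehat{\bP^1_V},\{0\})$, so the ``symmetric analogue at $\infty$'' concerns a module with overconvergent singularities along $\{0\}$, not the $A_1(K)^{\dag}$-module in question; inversion cannot interchange the roles of $\balpha$ and $\bbeta$ while keeping you on $(\widehat{\bP^1_V},\{\infty\})$. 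For (b): the twist of Lemma \ref{lem:calcofhyp} (ii) is an operation on $B_1(K)^{\dag}$-modules only (the Kummer module is singular at $0$, and tensoring with it is not an equivalence on $\bA^1$), so $A_1(K')^{\dag}/A_1(K')^{\dag}\Hyp_{\pi}(\balpha+\gamma;\bbeta+\gamma)$ is a genuinely different module from the scalar extension of the original one, and there is nothing to descend. A workable route in that case would be the localization triangle $i_{\plus}i^{!}\to\id\to\widetilde{j}_{\plus}\widetilde{j}^{\ast}\to+1$ together with a direct estimate, in the style of the proof of Proposition \ref{prop:connectiontype} (ii), showing that $i^{!}$ of the $\bA^1$-module has finite-dimensional cohomology; but since every application of the corollary in the paper has $\alpha_i\not\in\bZ$, the version you have actually proved suffices for the rest of the argument.
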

\begin{proof}
    By assumption and Theorem \ref{thm:hypandconv},
    $\sH_{\pi}(\balpha;\bbeta)$ is isomorphic to $\sHyp_{\psi,\plus}(\bchi;\brho)$
    for some $\psi$, $\chi_i$'s and $\rho_j$'s satisfying the assumptions of Theorem \ref{thm:hypandconv}.
    Therefore, the assertion for $\sH_{\pi}(\balpha; \bbeta)$ now follows.
    The assertion for $A_1(K)^{\dag}/A_1(K)^{\dag}\Hyp_{\pi}(\balpha;\bbeta)$
    follows from the theorem together with Proposition \ref{prop:connectiontype} (i), (ii).
\end{proof}

\subsection{Comparison of two hypergeometric $\sD$-modules associated with characters}

In Definition \ref{def:defofh}, we defined two versions of hypergeometric $\sD$-modules
associated with characters;
one uses  $\ast_{!}$ and the other uses $\ast_{\plus}$.
The goal of this subsection is to prove that these are naturally isomorphic to each other under the hypothesis in Theorem \ref{thm:hypandconv}.

We begin with a variant of Proposition \ref{prop:connectiontype} (ii).

\begin{proposition}
    Let $\alpha_1,\ldots,\alpha_m,\beta_1,\ldots,\beta_n$ be elements of $\frac{1}{q-1}\bZ$.
    Assume that $\alpha_i\not\in\bZ$ for any $i$ and moreover that $\alpha_i-\beta_j\not\in\bZ$ for any $i,j$.
    Let $j\colon\bG_{\rmm}\hookrightarrow\bA^1$ be the inclusion.
    Then, the natural morphism
    \[
        j_!j^{\ast}\big(A_1(K)^{\dag}/A_1(K)^{\dag}\Hyp_{\pi}(\balpha;\bbeta)\big)\longrightarrow 
        A_1(K)^{\dag}/A_1(K)^{\dag}\Hyp_{\pi}(\balpha;\bbeta)
    \]
    is an isomorphism.
    \label{prop:connectiontype2}
\end{proposition}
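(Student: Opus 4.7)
The plan is to reduce the statement to the argument already carried out in Proposition \ref{prop:connectiontype} (ii), via duality. Set $\sL\defeq A_1(K)^{\dag}/A_1(K)^{\dag}\Hyp_{\pi}(\balpha;\bbeta)$ — an overholonomic module by Corollary \ref{cor:overholonomic} — and let $\widetilde{i}\colon(\{0\},\emptyset)\to(\widehat{\bP^1_V},\{\infty\})$ denote the closed immersion of d-couples realizing $\{0\}\hookrightarrow\bA^1_k$. The localization triangle
\[
    \widetilde{j}_!\widetilde{j}^{\ast}\sL \longrightarrow \sL \longrightarrow \widetilde{i}_{\plus}\widetilde{i}^{\plus}\sL \xrightarrow{+1}
\]
reduces the claim to the vanishing $\widetilde{i}^{\plus}\sL=0$. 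Using $\widetilde{i}^{\plus}=\bD_{\{0\}}\circ\widetilde{i}^!\circ\bD_{\widehat{\bP^1_V}}$ together with the involutivity of $\bD$, this is further equivalent to $\widetilde{i}^!(\bD\sL)=0$; and by the description of $\widetilde{i}^!$ on a cyclic module used in the proof of Proposition \ref{prop:connectiontype} (ii), this amounts to showing that left multiplication by $x$ is bijective on $\bD\sL$.

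To compute $\bD\sL$, I would use the fact that for an overholonomic cyclic module $A_1(K)^{\dag}/A_1(K)^{\dag}P$ its dual is isomorphic, up to a degree shift, to $A_1(K)^{\dag}/A_1(K)^{\dag}P^{t}$, where $P^{t}$ denotes the formal adjoint of $P$ under the anti-automorphism $x\mapsto x$, $\partial\mapsto-\partial$. A direct calculation using $(x\partial)^{t}=-x\partial-1$ and $(x\partial-c)\cdot x=x\cdot(x\partial-c+1)$ shows that the left ideal $A_1(K)^{\dag}\cdot(\Hyp_{\pi}(\balpha;\bbeta))^{t}$ is generated by an operator of the form
\[
    \prod_{i=1}^m(x\partial-\alpha'_i)-C\cdot x\prod_{j=1}^n(x\partial-\beta''_j),
\]
with $\alpha'_i\defeq-1-\alpha_i$, $\beta''_j\defeq-2-\beta_j$, and $C\in K^{\times}$ satisfying $|C|=|\pi|^{m-n}=|p|^{(m-n)/(p-1)}$.

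Under our hypotheses the shifted parameters $\alpha'_i,\beta''_j$ again satisfy the conditions of Proposition \ref{prop:connectiontype} (ii): $\alpha'_i\in\frac{1}{q-1}\bZ\setminus\bZ$ (since $\alpha_i\notin\bZ$), $\beta''_j\in\frac{1}{q-1}\bZ$, and $\alpha'_i-\beta''_j=1+\beta_j-\alpha_i\notin\bZ$. The $p$-adic estimates in the proof of Proposition \ref{prop:connectiontype} (ii) depend only on the norm of the coefficient of $x\prod_j(x\partial-\beta_j)$ and on these non-integrality conditions, not on any specific sign; that proof therefore applies verbatim to $A_1(K)^{\dag}/A_1(K)^{\dag}(\Hyp_{\pi}(\balpha;\bbeta))^{t}$, giving the bijectivity of $l_x$ on $\bD\sL$ and hence $\widetilde{i}^{\plus}\sL=0$.

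The main technical obstacle is the identification of $\bD\sL$ with the cyclic module governed by the formal adjoint $P^{t}$: this is classical for algebraic $\sD$-modules on $\bA^1$, but in the arithmetic $\sD^{\dag}$-setting it should be established by constructing the free resolution $A_1(K)^{\dag}\xrightarrow{\cdot P}A_1(K)^{\dag}\to\sL\to 0$, applying $\bR\mathrm{Hom}_{A_1(K)^{\dag}}(-,A_1(K)^{\dag})$, and then transporting the resulting right module to a left module via the anti-automorphism $P\mapsto P^{t}$. Once this identification is in place and the adjoint is computed as above, the rest of the proof is a direct appeal to the analytic work already done in Proposition \ref{prop:connectiontype} (ii).
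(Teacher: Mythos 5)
Your argument is correct in outline, but it takes a genuinely different route from the paper's. The paper never dualizes: it invokes Crew's description of $i^{\plus}\sL$ as the two-term complex $\bigl[\sL^{\an}\xrightarrow{\partial\cdot}\sL^{\an}\bigr]$ on the analytification $\sL^{\an}=\sD^{\an}/\sD^{\an}\Hyp_{\pi}(\balpha;\bbeta)$, uses the index equality of Abe--Marmora to reduce bijectivity of $\partial$ to injectivity alone, and then settles injectivity by a short power-series computation in $A[0,1[$ using only $\alpha_i\notin\bZ$. Your route instead writes $i^{\plus}=\bD\circ i^!\circ\bD$, identifies $\bD\sL$ with the cyclic module on the formal adjoint, and observes that the adjoint of $\Hyp_{\pi}(\balpha;\bbeta)$ is, up to a unit, again of hypergeometric shape with parameters $-1-\alpha_i$, $-2-\beta_j$ and leading coefficient of the same norm, so that the bijectivity of $l_x$ follows from the estimates already proved in Proposition \ref{prop:connectiontype} (ii). Your adjoint computation checks out (one gets $(-1)^mP^{t}=\prod_i\bigl(x\partial-(-1-\alpha_i)\bigr)-(-1)^{n(p+1)}\pi^{m-n}x\prod_j\bigl(x\partial-(-2-\beta_j)\bigr)$), and the shifted parameters do satisfy the hypotheses of that proposition. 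The trade-off is as you identify it: your approach recycles the hardest analytic work but hinges on the identification $\bD\sL\cong A_1(K)^{\dag}/A_1(K)^{\dag}P^{t}$, i.e.\ on Virrion-style duality plus side-changing for coherent $\sD^{\dag}$-modules, which the paper never needs; to make it airtight you should note that right multiplication by $P$ is injective on the domain $A_1(K)^{\dag}$ and that $\mathrm{Hom}_{A_1(K)^{\dag}}(\sL,A_1(K)^{\dag})=0$, so the dual is concentrated in a single degree, and that the twist by $\omega^{-1}$ is exactly what produces the transpose. Note also that you need \emph{both} halves (injectivity and surjectivity of $l_x$) of the earlier proof, whereas the paper's index argument lets it get away with injectivity of $\partial$ only.
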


\begin{proof}
    We put $\sL\defeq A_1(K)^{\dag}/A_1(K)^{\dag}\Hyp_{\pi}(\balpha;\bbeta)$;
    it is overholonomic and has a Frobenius structure by Corollary \ref{cor:overholonomic}.
    By the localization triangle $j_!j^{\ast}\to\id\to i_{\plus}i^{\plus}\to$ \cite[(3.1.9.1)]{AbeMarmora},
    it suffices to prove that $i^{\plus}\sL=0$.
    Since $i^{\plus}\sL$ is isomorphic to the complex $\left[\sL^{\an}\xrightarrow{\partial\cdot} \sL^{\an}\right]$
    \cite[5.1.3]{Crew12CM}, where $\sL^{\an}$ denotes the analytification of $\sL$,
    it suffices to prove that $\partial$ bijectively acts on $\sL^{\an}$.
    Moreover, since the kernel and the cokernel of this morphism have the same dimension
    \cite[3.1.10 and Remark after that]{AbeMarmora},
    it only remains to show that it is injective.

    Recall that the ring $\sD^{\an}$ of analytic differential operators \cite[4.1]{Crew12CM} is described as
    \[
        \sD^{\an}=\Set{\sum_{k=0}^{\infty}a_k\partial^{[k]} | a_k\in A[0,1[\text{ and } \exists \eta<1, \forall r<1, \exists C_r>0, |a_k|_r\leq C_r\eta^k}.
    \]
    Here, $A[0,1[$ denotes the ring of analytic functions on the open unit disk, and $|\cdot|_r$ denotes
        the $r$-Gauss norm on $A[0,1[$.
        Since we have $\sL^{\an}=\sD^{\an}/\sD^{\an}\Hyp_{\pi}(\balpha;\bbeta)$,
        in order to show the injectivity of $\partial\cdot$, it suffices to prove the following property:
        if $P, Q\in \sD^{\an}$ satisfies $\partial P=Q\Hyp_{\pi}(\balpha;\bbeta)$,
    then $Q$ belongs to $\partial \sD^{\an}$.
    We may assume that $Q\in A[0,1[$. We put $Q=\sum c_lx^l$. Since
    \[
        Q\Hyp_{\pi}(\balpha;\bbeta)\equiv \sum_{l=0}^{\infty}\left(\prod_{i=1}^m(-l-1-\alpha_i)x^l-(-1)^{m+np}\pi^{m-n}\prod_{j=1}^n(-l-2-\beta_j)x^{l+1}\right)c_l
    \]
    modulo $\partial\sD^{\an}$, the assumption that $\alpha_i\not\in\bZ$ shows inductively that
    all $c_l$'s are zero.
\end{proof}

\begin{corollary}
    Under the situation in the previous proposition, the natural morphism
    \[
    j_{!}\sH_{\pi}(\balpha; \bbeta)\longrightarrow j_{\plus}\sH_{\pi}(\balpha;\bbeta)
    \]
    is an isomorphism.
    \label{cor:middleext}
\end{corollary}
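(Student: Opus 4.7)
The plan is to deduce the corollary directly from Proposition~\ref{prop:connectiontype}(ii) and Proposition~\ref{prop:connectiontype2}. Set
\[
    \sL \defeq A_1(K)^{\dag}/A_1(K)^{\dag}\Hyp_{\pi}(\balpha;\bbeta),
\]
so that $j^{\ast}\sL \cong \sH_{\pi}(\balpha;\bbeta)$ by Proposition~\ref{prop:connectiontype}(i). Proposition~\ref{prop:connectiontype}(ii) provides an isomorphism $\sL \xrightarrow{\sim} j_{\plus}j^{\ast}\sL = j_{\plus}\sH_{\pi}(\balpha;\bbeta)$ coming from the unit of the adjunction $j^{\ast}\dashv j_{\plus}$, while Proposition~\ref{prop:connectiontype2} provides an isomorphism $j_{!}\sH_{\pi}(\balpha;\bbeta) = j_{!}j^{\ast}\sL \xrightarrow{\sim} \sL$ coming from the counit of the adjunction $j_{!}\dashv j^{\ast}$. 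Composing these two gives an isomorphism $j_{!}\sH_{\pi}(\balpha;\bbeta)\xrightarrow{\sim}j_{\plus}\sH_{\pi}(\balpha;\bbeta)$.

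The one remaining point is to identify this composition with the canonical natural transformation $j_{!}\to j_{\plus}$ evaluated at $\sH_{\pi}(\balpha;\bbeta)$. This is a formal consequence of the triple of adjoints $j_{!}\dashv j^{\ast}\dashv j_{\plus}$ attached to the open immersion $j$: since $j^{\ast}j_{!}$ and $j^{\ast}j_{\plus}$ are both canonically isomorphic to the identity functor, applying $j^{\ast}$ to the composition $j_{!}j^{\ast}\sL\to\sL\to j_{\plus}j^{\ast}\sL$ yields the identity of $j^{\ast}\sL$, and by the adjunction $j^{\ast}\dashv j_{\plus}$ this uniquely characterizes the canonical morphism $j_{!}\sH_{\pi}(\balpha;\bbeta)\to j_{\plus}\sH_{\pi}(\balpha;\bbeta)$.

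Thus the substantive content of this corollary is already absorbed into Proposition~\ref{prop:connectiontype2}, where the non-integrality hypotheses on the $\alpha_i$'s were used in an essential way to force $i^{\plus}\sL=0$; the present statement requires no further arithmetic and only the formal verification above.
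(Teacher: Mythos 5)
Your proof is correct and follows exactly the paper's route: the paper's own proof is literally ``Combine Proposition \ref{prop:connectiontype} and Proposition \ref{prop:connectiontype2},'' and you have simply spelled out the formal adjunction argument identifying the composite with the canonical morphism $j_!\to j_{\plus}$, which the paper leaves implicit.
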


\begin{proof}
   Combine Proposition \ref{prop:connectiontype} and Proposition \ref{prop:connectiontype2}.
\end{proof}

Now, we get the following proposition.

\begin{proposition}
    Let $\psi$ be a non-trivial additive character on $k$,
    let $\chi_1,\ldots,\chi_m$ and $\rho_1,\ldots,\rho_n$ be characters on $k^{\times}$
    and assume that $\chi_i\neq\rho_j$ for any $i,j$.
    Then, the natural morphism
    \[
        \sHyp_{\psi,!}(\bchi; \brho)\longrightarrow\sHyp_{\psi,\plus}(\bchi; \brho)
    \]
    in $F\hyphen D^{\rb}_{\ovhol}(\bG_{\rmm,k}/K)$ is an isomorphism.
    \label{prop:twohypareisom}
\end{proposition}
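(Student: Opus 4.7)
The argument is an induction on $m+n$; the inductive step reduces, via the Fourier transform formula of Proposition \ref{prop:fourierandconv}, to Corollary \ref{cor:middleext}, which is the $!$-versus-$\plus$ analogue unavailable at the time Theorem \ref{thm:hypandconv} was proved. The base cases $(m,n)\in\{(0,0),(1,0),(0,1)\}$ are immediate since by definition $\sHyp_{\psi,!}$ and $\sHyp_{\psi,\plus}$ coincide there.

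For the inductive step ($m+n\geq 2$), Lemma \ref{lem:calcofh}(i) and (ii) hold uniformly for $\ast_!$ and $\ast_\plus$, and their proofs are compatible with the natural transformation $\mu_!\to\mu_\plus$. Using (i) we may assume $m\geq 1$, and using (ii) applied to $\gamma=\chi_m^{-1}$ we may twist both $\sHyp_{\psi,!}(\bchi;\brho)$ and $\sHyp_{\psi,\plus}(\bchi;\brho)$ uniformly by $\sK_{\chi_m^{-1}}[1]$, reducing to the case $\chi_m=\boldsymbol{1}$. The hypothesis $\chi_i\neq\rho_j$ is stable under this uniform twist, and it now forces $\rho_j\neq\boldsymbol{1}$, i.e.\ $\beta_j\notin\bZ$, for every $j$.

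Writing $\bchi=(\bchi',\chi_m)$ with $\chi_m=\boldsymbol{1}$, we have $\sHyp_{\psi,?}(\boldsymbol{1};\emptyset)\cong j^{\ast}\sL_{\psi}[1]$, so Proposition \ref{prop:fourierandconv} supplies a pair of compatible isomorphisms $\sHyp_{\psi,?}(\bchi;\brho)\cong j^{\ast}\FT_{\psi}\bigl(j_{?}\inv^{\ast}\sHyp_{\psi,?}(\bchi';\brho)\bigr)[2]$ for $?\in\{!,\plus\}$. Abbreviating $\sN_?\defeq\inv^{\ast}\sHyp_{\psi,?}(\bchi';\brho)$, the natural morphism $\sHyp_{\psi,!}(\bchi;\brho)\to\sHyp_{\psi,\plus}(\bchi;\brho)$ factors under these identifications as
\[
j^{\ast}\FT_{\psi}(j_{!}\sN_!)[2]\xrightarrow{(\mathrm{a})}j^{\ast}\FT_{\psi}(j_{!}\sN_{\plus})[2]\xrightarrow{(\mathrm{b})}j^{\ast}\FT_{\psi}(j_{\plus}\sN_{\plus})[2],
\]
where (a) is obtained by applying $j^{\ast}\FT_{\psi}\circ j_!\circ\inv^{\ast}$ to the induction hypothesis for $(\bchi';\brho)$ (which still satisfies $\chi_i\neq\rho_j$), and (b) is $j^{\ast}\FT_{\psi}$ applied to the natural transformation $j_!\to j_\plus$.

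Arrow (a) is an isomorphism by induction. For (b), combining Lemma \ref{lem:calcofh}(i) and Theorem \ref{thm:hypandconv} identifies $\sN_\plus\cong \sH_{\pi_{\psi^{-1}}}(-\bbeta;-\balpha')[m+n-1]$ as a $B_1(K)^{\dag}$-module, where $\balpha'=(\alpha_1,\dots,\alpha_{m-1})$. Corollary \ref{cor:middleext} then yields the needed isomorphism $j_!\sN_\plus\cong j_\plus\sN_\plus$, since its hypotheses $-\beta_j\notin\bZ$ and $(-\beta_j)-(-\alpha_i)=\alpha_i-\beta_j\notin\bZ$ are exactly what the above reductions ensure. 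The main obstacle is the bookkeeping needed to track the natural morphism through each identification (most delicately through Proposition \ref{prop:fourierandconv}), together with the observation that the reduction $\chi_m=\boldsymbol{1}$ furnishes precisely the supplementary condition $\beta_j\notin\bZ$ required to upgrade from Proposition \ref{prop:connectiontype}(ii) (used for $\plus$ in Theorem \ref{thm:hypandconv}) to the stronger Corollary \ref{cor:middleext}.
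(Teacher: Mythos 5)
Your proposal is correct and follows essentially the same route as the paper: induction on $m+n$, reduction via $\inv^{\ast}$ and Kummer twisting to the case of one trivial $\chi$, transfer through Proposition \ref{prop:fourierandconv} to a statement about $j_!\to j_{\plus}$ on $\bA^1$, and conclusion by Theorem \ref{thm:hypandconv} together with Corollary \ref{cor:middleext}. Your explicit remark that the normalization $\chi_m=\boldsymbol{1}$ forces $\beta_j\notin\bZ$, which is exactly the extra hypothesis of Proposition \ref{prop:connectiontype2}, is the same observation the paper makes implicitly when it says the parameters satisfy "the assumption of Proposition \ref{prop:connectiontype2}".
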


\begin{proof}
    If $(m,n)=(0,0),(1,0),(0,1)$, the assertion is obvious by definition.
    Let us prove the assertion by induction on $m+n$.
    As in the proof of Theorem \ref{thm:hypandconv}, we may assume that $m>0$ and that $\chi_1$ is trivial.
    Then, (again by the similar argument as in the proof of Theorem \ref{thm:hypandconv},) the claim reduces to showing that
    \[
        j^{\ast}\big(\FT_{\psi}(j_{!}\sHyp_{\pi_{\psi^{-1}},!}(\brho^{-1};\bchi'^{-1}))\big)
        \longrightarrow j^{\ast}\big(\FT_{\psi}(j_{\plus}\sHyp_{\pi_{\psi},\plus}(\brho^{-1};\bchi'^{-1}))\big)
    \]
    is an isomorphism.
    By induction hypothesis, the natural morphism
    \[
        \sHyp_{\pi_{\psi^{-1}},!}(\brho^{-1};\bchi'^{-1})\longrightarrow\sHyp_{\pi_{\psi^{-1}},\plus}(\brho^{-1};\bchi'^{-1})
    \]
    is an isomorphism, and moreover Theorem \ref{thm:hypandconv} shows that both are isomorphic to
    $\sH_{\pi}(\balpha;\bbeta)$ for some $\pi$, $\alpha_i$'s and $\beta_j$'s satisfying
    the assumption of Proposition \ref{prop:connectiontype2}.
    Therefore, the morphism
    \[
        j_{!}\sHyp_{\pi_{\psi^{-1}},!}(\brho^{-1};\bchi'^{-1})\longrightarrow j_{\plus}\sHyp_{\pi_{\psi^{-1}},\plus}(\brho^{-1};\bchi'^{-1})
    \]
    is an isomorphism because the underlying morphism in $D^{\rb}_{\ovhol}(\bA^1_k/K)$ is an isomorphism by the previous corollary.
    This shows the assertion.
\end{proof}

\section{Hypergeometric Isocrystals.}

In this section, we study the isocrystals (in the classical sense)
defined by hypergeometric differential operators.
The first subsection compares it with the arithmetic hypergeometric $\sD$-modules in the previous section,
and as a result, proves that the latter is has a structure of overconvergent $F$-isocrystals.
In the second subsection, we discuss the irreducibility and a characterization of these overconvergent $F$-isocrystals.

Recall that we are always assuming that $K$ has a primitive $p$-th root of unity.

\subsection{Overconvergence of hypergeometric isocrystals.}

Throughout this subsection, we fix a $\pi\in K^{\times}$ with $\pi^{q-1}=(-p)^{(q-1)/(p-1)}$,
fix elements $\alpha_1,\dots,\alpha_m$ and $\beta_1,\dots,\beta_n$ in $\frac{1}{q-1}\bZ$
such that $\alpha_i-\beta_j\not\in\bZ$ for any $i,j$, and assume that $(m,n)\neq(0,0)$.
We put $r\defeq\max\{m,n\}$.
We denote by $T$ the divisor $\{0,\infty\}$ of $\bP^1_k$ if $m\neq n$ and $\{0,1,\infty\}$ if $m=n$,
and we put $\sX\defeq \widehat{\bP^1_V}\setminus T$ and $X\defeq\bP^1_k\setminus T$.
In this subsection, $h_l(x)\in K[x] (l\in\bN)$ denote the polynomial defined by
\[
    \Hyp_{\pi}(\balpha;\bbeta)=\sum_{l=0}^{\infty}h_l(x)\partial^l.
\]

\begin{lemma}
    \label{lem:coefficientofhgoperator}
    We have
    \[
        h_r(x) = \begin{cases} x^m & \text{if }m>n,\\ x^m\big(1-(-1)^{n(p+1)}x\big) & \text{if }m=n,\\ -(-1)^{m+np}\pi^{m-n}x^{n+1} & \text{if }m<n.\end{cases}
    \]
    If $l>r$, then $h_l(x)=0$.
    \label{lem:hr}
\end{lemma}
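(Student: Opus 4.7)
The plan is a direct calculation, extracting the leading $\partial$-coefficient from each of the two summands in $\Hyp_{\pi}(\balpha;\bbeta)$.

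First I would record the elementary identity
\[
(x\partial)^k = x^k\partial^k + (\text{differential operator of order} < k),
\]
valid in $B_1(K)^{\dag}$ for every $k\geq 0$ (proved by induction on $k$ using $\partial x = x\partial + 1$). It follows that for any finite sequence $\gamma_1,\dots,\gamma_s\in K$,
\[
\prod_{i=1}^s(x\partial-\gamma_i) = x^s\partial^s + (\text{operator of order} < s).
\]
In particular $\prod_{i=1}^m(x\partial-\alpha_i)$ has leading term $x^m\partial^m$ and all lower $h_l$'s have $l<m$; and $x\prod_{j=1}^n(x\partial-\beta_j)$ has leading term $x^{n+1}\partial^n$ with all remaining contributions of order $<n$ in $\partial$.

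Now I would split into the three cases. If $m>n$, then $r=m$, only the first summand contributes to $\partial^r$, giving $h_r(x)=x^m$, and $h_l(x)=0$ for $l>r$. If $m<n$, then $r=n$, only the second summand contributes to $\partial^r$, giving $h_r(x)=-(-1)^{m+np}\pi^{m-n}x^{n+1}$, and $h_l(x)=0$ for $l>r$. If $m=n=r$, both contribute: from the first summand we get $x^m\partial^m$, from the second we get $-(-1)^{m+np}\pi^{m-n}x^{n+1}\partial^n$. Using $m=n$, we have $\pi^{m-n}=1$ and $(-1)^{m+np}=(-1)^{m(1+p)}=(-1)^{n(p+1)}$, so the combined leading coefficient is
\[
h_r(x)=x^m-(-1)^{n(p+1)}x^{m+1}=x^m\bigl(1-(-1)^{n(p+1)}x\bigr),
\]
and $h_l(x)=0$ for $l>r$.

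There is essentially no obstacle: the only thing to be careful about is the sign bookkeeping in the $m=n$ case, where one must verify that the exponent $m+np$ collapses to $n(p+1)$ modulo $2$. Everything else is a one-line consequence of the leading-term formula for $\prod(x\partial-\gamma_i)$.
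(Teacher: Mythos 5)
Your proof is correct and follows the same route as the paper: the paper simply observes that $(x\partial)^l = x^l\partial^l + (\text{order}<l)$ and says the assertion follows, while you spell out the resulting case analysis and the sign check $(-1)^{m+np}=(-1)^{n(p+1)}$ when $m=n$, all of which is accurate.
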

\begin{proof}
    $(x\partial)^l$ is the sum of $x^l\partial^l$ and a differential operator of differential order $<l$.
    The assertion follows from this fact.
\end{proof}

This lemma shows that $\frac{h_l(x)}{h_r(x)}$'s are global sections of $\sO_{\sX_K}$.
With this in mind, we define the hypergeometric isocrystals $H_{\pi}(\balpha;\bbeta)$ on $X$ as follows.

\begin{definition}
    The hypergeometric isocrystal $H_{\pi}(\balpha;\bbeta)$ is 
    the free $\sO_{\sX_K}$-module $(\sO_{\sX_K})^{\oplus r}$ equipped with the connection
    \[
        \nabla = \begin{pmatrix}
            0 & 1 & & & \\ & 0 & 1 & & \\ & & & & \\ & & & 0 & 1 \\ -\frac{h_0(x)}{h_r(x)} & -\frac{h_1(x)}{h_r(x)} & \dots & \dots & -\frac{h_{r-1}(x)}{h_r(x)}
        \end{pmatrix} \otimes dx.
    \]
\end{definition}

The following theorem, which relates the isocrystal $H_{\pi}(\balpha;\bbeta)$ and the arithmetic
$\sD$-module $\sH_{\pi}(\balpha;\bbeta)$, is the goal of this subsection.
For the convenience of the reader, we repeat the hypothesis in this subsection.

\begin{theorem}
    Let $\pi$ be an element of $K^{\times}$ satisfying $\pi^{q-1}=(-p)^{(q-1)/(p-1)}$,
    and let $\alpha_1,\dots,\alpha_m$ and $\beta_1,\dots,\beta_n$ be elements of $\frac{1}{q-1}\bZ$
    satisfying $\alpha_i-\beta_j$ for any $i,j$.
    Then, we have an isomorphism
    \[
        \sp_{\plus}H_{\pi}(\balpha;\bbeta)\cong \sH_{\pi}(\balpha;\bbeta).
    \]
    In particular, $H_{\pi}(\balpha;\bbeta)$ has a structure of a convergent $F$-isocrystal on $X$
    overconvergent along $T$.
    \label{thm:ocFisoc}
\end{theorem}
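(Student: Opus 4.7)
The plan is to use the characterization, recalled in Section 1.1 from \cite{Caro06CM}, that the essential image of $\sp_{\plus}$ consists of those coherent $\sD^{\dag}_{\widehat{\bP^1_V},\bQ}(\pdag{T})$-modules with Frobenius structure whose restriction to $\sX$ is $\sO_{\sX,\bQ}$-coherent. The module $\sH_{\pi}(\balpha;\bbeta)$ already carries a Frobenius structure by Corollary \ref{cor:overholonomic}, so the theorem reduces to two statements: (a) $\sH_{\pi}(\balpha;\bbeta)|_{\sX}$ is $\sO_{\sX,\bQ}$-coherent of rank $r$; and (b) this restriction is isomorphic to $H_{\pi}(\balpha;\bbeta)$ as an $\sO_{\sX,\bQ}$-module with connection. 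When $m = n$, one first extends $\sH_{\pi}$, a priori a $\sD^{\dag}(\pdag\{0,\infty\})$-module, to a $\sD^{\dag}(\pdag T)$-module via the localization map before restricting to $\sX$.

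Assertion (a) is the core of the proof. Let $e \in \sH_{\pi}(\balpha;\bbeta)$ denote the image of $1 \in B_1(K)^{\dag}$. By Lemma \ref{lem:coefficientofhgoperator}, the leading coefficient $h_r(x)$ of $\Hyp_{\pi}(\balpha;\bbeta)$ is invertible on $\sX$, so the relation $\Hyp_{\pi}(\balpha;\bbeta)\cdot e = 0$ yields
\[
    \partial^r e \;=\; -\sum_{l=0}^{r-1}\frac{h_l(x)}{h_r(x)}\,\partial^l e,
\]
and an easy induction expresses every $\partial^N e$ for $N\ge r$ as an $\sO_{\sX,\bQ}$-combination of $e,\partial e,\ldots,\partial^{r-1}e$. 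The subtle step is to promote this from ordinary derivatives to divided powers: an arbitrary local section of $\sD^{\dag}_{\sX,\bQ}$ is a convergent sum $\sum a_{l,k}x^l\partial^{[k]}$, and after reducing each $\partial^{[k]}e = \partial^k e/k!$ modulo $\Hyp_{\pi}$ the resulting coefficients $c^{(k)}_l(x)\in\sO_{\sX,\bQ}$ must still satisfy the overconvergence bounds required for the resulting infinite sum in the $\sO_{\sX,\bQ}$-span of $e,\ldots,\partial^{r-1}e$ to converge. This is where the arithmetic of the parameters enters: estimates of the form provided by Lemma \ref{lem:valofvp} for products $\prod_{s}(s-\alpha_i)$ and $\prod_{s}(s-\beta_j)^{-1}$, together with the normalization $|\pi|=p^{-1/(p-1)}$, are needed to compensate for the $(k!)^{-1}$ factors.

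Once (a) is in place, (b) is nearly formal: the action of $\partial$ on the basis $e,\partial e,\ldots,\partial^{r-1}e$, read off from the displayed recurrence, is precisely the companion matrix appearing in the definition of the connection $\nabla$ of $H_{\pi}(\balpha;\bbeta)$. The $\sO_{\sX,\bQ}$-linear map sending $\partial^k e$ to the $(k+1)$-st standard basis vector of $(\sO_{\sX,\bQ})^{\oplus r}$ is therefore horizontal, identifying $\sH_{\pi}(\balpha;\bbeta)|_{\sX}$ with $H_{\pi}(\balpha;\bbeta)$. Caro's criterion then upgrades $H_{\pi}(\balpha;\bbeta)$ to an overconvergent $F$-isocrystal on $X$ and produces the desired isomorphism $\sp_{\plus}H_{\pi}(\balpha;\bbeta)\cong\sH_{\pi}(\balpha;\bbeta)$.

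The main obstacle is step (a), and specifically the $p$-adic convergence check for divided powers. A general coherent $B_1(K)^{\dag}$-module need not become $\sO$-coherent after restriction to $\sX$; what rescues us is the combination of the explicit form of the leading coefficient $h_r(x)$ given by Lemma \ref{lem:coefficientofhgoperator} (invertible on $\sX$, so that the recurrence only introduces multiplication by $h_r^{-1}$) and the arithmetic of the parameters ($\alpha_i,\beta_j\in\tfrac{1}{q-1}\bZ$ and $|\pi|=p^{-1/(p-1)}$), which together make the estimates of Lemma \ref{lem:valofvp} strong enough to absorb the $(k!)^{-1}$ growth. Once this is done, the identification with $H_{\pi}(\balpha;\bbeta)$ and the application of Caro's criterion are purely linear-algebraic.
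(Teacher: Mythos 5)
There is a genuine gap, and it sits exactly where you locate ``the core of the proof''. Your step (a) asks for a direct verification that the reduction of $\partial^{[k]}e=\partial^k e/k!$ modulo $\Hyp_{\pi}(\balpha;\bbeta)$ produces coefficients in $\sO_{\sX,\bQ}$ that decay fast enough to absorb the factor $|k!|^{-1}\sim p^{k/(p-1)}$. You assert that estimates ``of the form provided by Lemma \ref{lem:valofvp}'' together with $|\pi|=p^{-1/(p-1)}$ achieve this, but no such estimate is carried out, and there is no reason to expect one of that elementary kind to exist: Lemma \ref{lem:valofvp} controls valuations of products $\prod_s(s-\alpha)$ arising in the \emph{first-order} recursions of Proposition \ref{prop:connectiontype}, whereas the growth of the companion-matrix iterates $c^{(k)}_l(x)$ of a rank-$r$ system is an analytic phenomenon (generic radius of convergence, existence of a Frobenius structure) that is precisely the non-trivial content of Main Theorem (i). This is why the paper does \emph{not} argue directly on $\sX$: it uses Corollary \ref{cor:overholonomic} (overholonomicity with Frobenius, obtained from the convolution description) together with Caro's theorem that an overholonomic $F$-$\sD^{\dag}$-module is $\sO$-coherent on \emph{some} dense open $\sY\subset\sX$, then spreads the convergence of the associated isocrystal from $Y$ to all of $X$ by Ogus's density theorem \cite[Theorem 2.16]{Ogus84}. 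Your proposal uses the Frobenius structure from Corollary \ref{cor:overholonomic} only as a bookkeeping device and discards the overholonomicity, which is the actual engine of the proof.

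A second, related omission is in your step (b). Knowing that $e,\partial e,\dots,\partial^{r-1}e$ span $\sH_{\pi}(\balpha;\bbeta)|_{\sX}$ over $\sO_{\sX,\bQ}$ only bounds the rank above by $r$; to identify the module with the companion-matrix connection $H_{\pi}(\balpha;\bbeta)$ you must also show the natural surjection $\sD_{\sX,\bQ}/\sD_{\sX,\bQ}\Hyp_{\pi}(\balpha;\bbeta)\to\sH_{\pi}(\balpha;\bbeta)|_{\sX}$ is injective, i.e.\ that the rank is exactly $r$ and no extra relations appear after passing to $\sD^{\dag}$. The paper establishes this in Lemma \ref{lem:dimofstalks} by computing Frobenius traces of $\sHyp_{\psi,!}(\bchi;\brho)$ via Proposition \ref{prop:frobeniustrace} and matching them with those of the rank-$r$ $\ell$-adic hypergeometric sheaf, so that the fibres have dimension exactly $r$; Lemma \ref{lem:phiisisom} then deduces the isomorphism from a surjection between free modules of equal rank. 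Neither ingredient appears in your proposal, so even granting step (a) the identification with $H_{\pi}(\balpha;\bbeta)$ is not ``nearly formal''.
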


In the course of the proof, we need the theory of hypergeometric functions over finite fields
(or hypergeometric sums) \cite[(8.2.7)]{Katz90ESDE} and the $\ell$-adic hypergeometric sheaves \cite[Theorem 8.4.2]{Katz90ESDE}.

\begin{definition}
(i) Let $m, n$ be natural numbers and let $t$ be an element of $k$.
We define the hypersurface $V(m,n,t)$ of $(\bG_{\rmm,k})^{m+n}$ by the equation
\[
    \prod_{i=1}^m x_i \cdot \prod_{j=1}^n y_j^{-1} = t,
\]
where $(x_1,\dots,x_m,y_1,\dots,y_n)$ is the coordinate of $(\bG_{\rmm,k})^{m+n}$.

(ii) Let $\psi$ be a non-trivial additive character on $k$,
and let $\chi_1,\ldots,\chi_m,\rho_1,\ldots,\rho_n$ be multiplicative characters on $k^{\times}$.
We denote the sequence $\chi_1,\dots,\chi_m$ \resp{$\rho_1,\dots,\rho_n$}
by $\bchi$ \resp{$\brho$}.
Then, the function $\Hyp_{\psi}(\bchi; \brho)$ on $k^{\times}$ is defined by
\[
    \Hyp_{\psi}(\bchi;\brho)(t) \defeq \sum\psi\left(\sum_{i=1}^m x_i-\sum_{j=1}^n y_j\right)\cdot\prod_{i=1}^m\chi_i(x_i)\cdot
    \prod_{j=1}^n\rho_j^{-1}(y_j),
\]
where the sum runs over $(x_1,\dots,x_m,y_1,\dots,y_n)\in V(m,n,t)$.
\end{definition}

\begin{proposition}[{\cite[(8.2.7), Theorem 8.4.2]{Katz90ESDE}}]
    Under the notation in Definition 4.1.4 (ii), assume that $\chi_i\neq\rho_j$ for each $i,j$.
    Then, there exists a smooth $\ell$-adic sheaf $\sH^{\ell}_{\psi,!}(\bchi;\brho)$ of
    rank $r=\max\{m,n\}$ on $X$,
    whose Frobenius trace at a closed point $x$ of degree $h$ equals
    \[
        (-1)^{m+n+1}\Hyp_{\psi\circ\Tr_{\kappa(x)/k}}(\bchi\circ\Norm_{\kappa(x)/k};\brho\circ\Norm_{\kappa(x)/k}).
    \]
\end{proposition}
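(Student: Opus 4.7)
The plan is to follow Katz's construction in \cite{Katz90ESDE} of the $\ell$-adic hypergeometric sheaves as iterated multiplicative convolutions of rank-one ``Kloosterman building blocks''.

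First I would assemble the rank-one sheaves. For each $\chi_i$, let $\sF_i$ be the extension by zero to $\bA^1_k$ of the tensor product of the Artin--Schreier sheaf $\sL_\psi$ with the Kummer sheaf $\sL_{\chi_i}$ on $\bG_{\rmm,k}$. For each $\rho_j$, let $\sG_j$ be the pullback under the inversion $\inv$ of the analogous construction using $\psi^{-1}$ and $\rho_j^{-1}$. Then define $\sH^{\ell}_{\psi,!}(\bchi;\brho)$ to be the $\ast_!$-convolution of all $\sF_i$'s and $\sG_j$'s on $\bG_{\rmm,k}$, up to an overall cohomological shift. By construction, the resulting object carries a natural Frobenius structure.

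Second, the claimed formula for the Frobenius trace follows by iterating the $\ell$-adic analogue of Proposition~\ref{prop:frobeniusofconvolution}: the trace of an $\ast_!$-convolution at a point $t$ is the sum over pairs $(a,b)$ with $ab=t$ of the products of traces. Applied $m+n-1$ times, this expresses $\Tr(\Phi_x \mid \sH^{\ell}_{\psi,!}(\bchi;\brho))$ at a closed point $x$ of degree $h$ as a sum over $(x_1,\ldots,x_m,y_1,\ldots,y_n)\in V(m,n,x)(\kappa(x))$. The individual trace of $\sF_i$ at $x_i$ yields $\psi\circ\Tr_{\kappa(x)/k}(x_i)\cdot\chi_i\circ\Norm_{\kappa(x)/k}(x_i)$, and similarly $\sG_j$ contributes $\psi\circ\Tr_{\kappa(x)/k}(-y_j)\cdot\rho_j^{-1}\circ\Norm_{\kappa(x)/k}(y_j)$; collecting the signs from the shifts and $j_!$-extensions packages into the overall prefactor $(-1)^{m+n+1}$.

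The main obstacle is proving that this complex is concentrated in one degree and is in fact a \emph{smooth} $\ell$-adic sheaf of rank precisely $r=\max\{m,n\}$ on $X$. Following Katz, one would translate $\ast_!$-convolution with the $\sL_\psi$-factors into the $\ell$-adic Fourier transform on $\bA^1_k$ (the $\ell$-adic counterpart of Proposition~\ref{prop:fourierandconv}), whose effect on rank and ramification is controlled by Laumon's stationary-phase principle and the Euler--Poincar\'e formula. The hypothesis $\chi_i\neq\rho_j$ ensures that the relevant local cohomologies vanish, giving smoothness on $X$; the generic rank is then computed by a local Euler characteristic count at $0$ and $\infty$. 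In the case $m=n$ the same analysis reveals an additional simple singularity at $t=1$, forcing us to excise $\{1\}$ from $\bG_{\rmm,k}$ to obtain smoothness there — this is the delicate case where the convolution geometry degenerates.
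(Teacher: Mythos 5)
The paper gives no proof of this proposition at all: it is quoted directly from Katz \cite[(8.2.7), Theorem 8.4.2]{Katz90ESDE}, and your sketch is a faithful outline of Katz's own argument — the same rank-one building blocks $j_!(\sL_\psi\otimes\sL_{\chi_i})$ and $\inv^{\ast}(\sL_{\psi^{-1}}\otimes\sL_{\rho_j^{-1}})$, the same convolution trace identity yielding the sum over $V(m,n,x)$ with the prefactor $(-1)^{m+n+1}$, and the same reduction of smoothness and rank to Fourier transform, stationary phase and Euler--Poincar\'e computations. So your approach coincides with the paper's (namely, Katz's), with the genuinely hard part (lissit\'e and the exact rank $r$, including the extra singularity at $1$ when $m=n$) correctly identified but, exactly as in the paper, delegated to Katz's machinery rather than reproved.
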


We refer the smooth $\ell$-adic sheaf $\sH^{\ell}_{\psi,!}(\bchi;\brho)$ in the previous proposition
as ``the $\ell$-adic hypergeometric sheaf''.

We are now ready for the proof of Theorem \ref{thm:ocFisoc}.
We start by calculating the Frobenius trace of $\sHyp_{\psi,!}(\bchi;\brho)$.

\begin{proposition}
    Let $\psi$ be a non-trivial additive character on $k$, and
    let $\chi_1,\dots,\chi_m$ and $\rho_1,\dots,\rho_n$ be characters on $k^{\times}$.
    Let $x$ be a $k$-rational point of $\bG_{\rmm,k}$.
    Then, the Frobenius trace of $\sHyp_{\psi,!}(\bchi;\brho)$ at $x$ equals
    $q^{2(m+n)}\Hyp_{\psi}(\bchi;\brho)(x)$.
    \label{prop:frobeniustrace}
\end{proposition}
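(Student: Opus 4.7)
The strategy is to unwind $\sHyp_{\psi,!}(\bchi;\brho)$ by Definition~\ref{def:defofh} as an iterated $\ast_!$-convolution of the rank-one atoms $\sHyp_{\psi,!}(\chi_i;\emptyset)$ and $\sHyp_{\psi,!}(\emptyset;\rho_j)$, and then compute the Frobenius trace summand by summand via Proposition~\ref{prop:frobeniusofconvolution}. Iteration of that trace formula yields
\[
    \Tr\bigl(\Phi\bigm|\sHyp_{\psi,!}(\bchi;\brho)_x\bigr) = \sum_{\substack{(z_1,\dots,z_{m+n})\in(k^{\times})^{m+n}\\ z_1\cdots z_{m+n}=x}} \prod_{i=1}^m \Tr\bigl(\Phi\bigm|\sHyp_{\psi,!}(\chi_i;\emptyset)_{z_i}\bigr)\,\prod_{j=1}^n \Tr\bigl(\Phi\bigm|\sHyp_{\psi,!}(\emptyset;\rho_j)_{z_{m+j}}\bigr),
\]
reducing the problem to the two atomic trace computations.

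For $\sHyp_{\psi,!}(\chi_i;\emptyset)=j^{\ast}\sL_{\psi}\totimes\sK_{\chi_i}[2]$, I would use the compatibility $\sp_{\plus}(E_1\otimes E_2)\cong\sp_{\plus}E_1\totimes\sp_{\plus}E_2[\dim\sP]$ recalled in Subsection~1.1 to rewrite this atom as $\sp_{\plus}(j^{\ast}L_{\psi}\otimes K_{\chi_i})[1]$. Combining the stalkwise identification $i_z^{\plus}\sp_{\plus}E\cong\sp_{\plus}(i_z^{\ast}E)(1)[1]$ invoked in the proofs of Propositions~\ref{prop:propertiesofDwork}(iii) and \ref{prop:propertiesofKummer}(iii) with the fact that the rank-one isocrystal $j^{\ast}L_{\psi}\otimes K_{\chi_i}$ acts on its fiber at $z\in k^{\times}$ by multiplication by $\psi(z)\chi_i(z)$, the atomic trace should simplify to $q^{2}\psi(z)\chi_i(z)$. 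The case $\sHyp_{\psi,!}(\emptyset;\rho_j)=\inv^{\ast}(j^{\ast}\sL_{\psi^{-1}}\totimes\sK_{\rho_j^{-1}})[2]$ is parallel: since $\inv$ is an isomorphism, $\inv^{\ast}$ sends the Frobenius trace at $z$ to that of the inner object at $z^{-1}$, producing $q^{2}\psi(-z^{-1})\rho_j(z)$, where I use $\psi^{-1}(z^{-1})=\psi(-z^{-1})$ and $\rho_j^{-1}(z^{-1})=\rho_j(z)$. The delicate point lies precisely here: one has to compose correctly the Tate twist $(1)$ and the shifts $[1]$, $[2]$ arising from $\sp_{\plus}$, from the tensor-product comparison, and from Definition~\ref{def:defofh}, and verify that the resulting scalar is exactly $q^{2}$ per atom.

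Finally, substituting $z_i=x_i$ for $1\le i\le m$ and $z_{m+j}=y_j^{-1}$ for $1\le j\le n$, the constraint $z_1\cdots z_{m+n}=x$ becomes $\prod_{i} x_i\cdot\prod_{j} y_j^{-1}=x$, which is exactly the defining equation of $V(m,n,x)$. The product of atomic traces rewrites as
\[
    q^{2(m+n)}\,\psi\!\left(\sum_{i=1}^m x_i-\sum_{j=1}^n y_j\right)\prod_{i=1}^m\chi_i(x_i)\prod_{j=1}^n\rho_j^{-1}(y_j),
\]
and summing over $V(m,n,x)$ immediately yields $q^{2(m+n)}\Hyp_{\psi}(\bchi;\brho)(x)$, completing the proof.
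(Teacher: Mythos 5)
Your proposal is correct and takes essentially the same route as the paper: the atomic traces $q^2\psi(z)\chi_i(z)$ and $q^2\psi(-z^{-1})\rho_j(z)$ are obtained from Propositions \ref{prop:propertiesofDwork}(iii) and \ref{prop:propertiesofKummer}(iii), and the general case follows by iterating Proposition \ref{prop:frobeniusofconvolution} together with the multiplicative-convolution identity for the sums $\Hyp_{\psi}(\bchi;\brho)$. The paper phrases the reduction as an induction via the formula $\Hyp_{\psi}(\bchi,\bchi';\brho,\brho')(t)=\sum_{t_1t_2=t}\Hyp_{\psi}(\bchi;\brho)(t_1)\Hyp_{\psi}(\bchi';\brho')(t_2)$ rather than expanding the full sum over $V(m,n,x)$ in one step, but this is only a cosmetic difference.
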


\begin{proof}
    If $(m,n)=(1,0)$, Proposition \ref{prop:propertiesofDwork} (iii) and \ref{prop:propertiesofKummer} (iii)
    show that the Frobenius trace of $\sHyp_{\psi,!}(\chi;\emptyset)$ at $x$ equals $q^2\psi(x)\chi(x)$,
    which coincides with $q^2\Hyp_{\psi}(\chi;\emptyset)(x)$.
    The case where $(m,n)=(0,1)$ can be proved similarly.

    In general, for any sequences $\bchi$ \resp{$\bchi',\brho,\brho'$} of characters on $k^{\times}$, we have
    \[
        \Hyp_{\psi}(\bchi,\bchi';\brho,\brho')(t) = \sum_{t_1t_2=t}\Hyp_{\psi}(\bchi;\brho)(t_1)\Hyp_{\psi}(\bchi',\brho')(t_2).
    \]
    The claim is proved by this formula, Proposition \ref{prop:frobeniusofconvolution} and the previous two cases.
\end{proof}

\begin{lemma}
    \label{lem:dimofstalks}
    Let $T'$ be a divisor of $\bP^1_k$ that includes $T$,
    put $\sY\defeq\widehat{\bP^1_V}\setminus T'$, $Y\defeq \bP^1_k\setminus T'$,
    and assume that $\sH_{\pi}(\balpha;\bbeta)|_{\sY}$ is a coherent $\sO_{\sY,\bQ}$-module.
    Then, $\sH_{\pi}(\balpha;\bbeta)|_{\sY}$ is a free $\sO_{\sY,\bQ}$-module of rank $r$.
\end{lemma}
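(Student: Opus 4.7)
The plan is to observe that on $\sY$ the leading coefficient $h_r(x)$ of $\Hyp_{\pi}(\balpha;\bbeta)$ (as a polynomial in $\partial$) is a unit of $\sO_{\sY,\bQ}$, and to exploit this via local Euclidean division modulo $\Hyp_{\pi}(\balpha;\bbeta)$ to pin down the rank.

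\textbf{Step 1 (Invertibility of $h_r$).} By Lemma \ref{lem:coefficientofhgoperator}, the zero locus of $h_r$ in $\bP^1_k$ is $\{0,\infty\}$ when $m\neq n$ and $\{0,1,\infty\}$ when $m=n$; in both cases it is contained in $T\subseteq T'$. Hence $h_r(x)\in\sO_{\sY,\bQ}^{\times}$.

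\textbf{Step 2 (Local freeness).} Put $\sM\defeq\sH_{\pi}(\balpha;\bbeta)|_{\sY}$. By hypothesis $\sM$ is coherent over $\sO_{\sY,\bQ}$, and it inherits an integrable connection from its $\sD^{\dag}_{\sY,\bQ}$-module structure. On the smooth connected one-dimensional $\sY$, a coherent $\sO$-module with integrable connection is locally free of constant rank; call this rank $s$.

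\textbf{Step 3 (Identifying $s=r$).} Work on an affinoid $U\subseteq\sY$. Since $h_r$ is a unit on $U$, the monic operator $\widetilde{P}\defeq h_r^{-1}\Hyp_{\pi}(\balpha;\bbeta)=\partial^r+\sum_{l=0}^{r-1}(h_l/h_r)\partial^l$ is of order exactly $r$ with invertible leading coefficient. Carrying out Euclidean division with respect to $\widetilde{P}$, every element of $\sD^{\dag}_{U,\bQ}$ can be uniquely written as $Q\widetilde{P}+R$ with $R=\sum_{l=0}^{r-1}f_l(x)\partial^l$ and $f_l\in\sO_{U,\bQ}$. Hence $\sM|_U=\sD^{\dag}_{U,\bQ}/\sD^{\dag}_{U,\bQ}\widetilde{P}$ is the free $\sO_{U,\bQ}$-module on $[1],[\partial],\dots,[\partial^{r-1}]$, forcing $s=r$. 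Uniqueness of the remainder gives $\sO_{U,\bQ}$-linear independence, and existence gives generation.

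The principal obstacle lies in Step 3, specifically in justifying the Euclidean division within the overconvergent ring $\sD^{\dag}_{U,\bQ}$, where operators are infinite series in $\partial$ and naive term-by-term division need not converge. The coherence hypothesis on $\sM|_{\sY}$ is precisely what rules out pathology: it ensures that the algebraic reduction modulo $\widetilde{P}$, which works formally thanks to the invertibility of $h_r$, actually produces elements of $\sD^{\dag}_{U,\bQ}$ and $\sO_{U,\bQ}$ with the correct convergence, so the quotient is effectively computed by the classical division in the filtered ring of finite-order differential operators.
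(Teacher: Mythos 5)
Your Steps 1 and 2 are fine and agree with the first half of the paper's proof (the paper gets freeness from the fact that the coherence hypothesis makes $\sH_{\pi}(\balpha;\bbeta)|_{\sY}$ come from a convergent isocrystal, hence locally free over the principal ideal ring $\Gamma(\sY_K,\sO_{\sY_K})$). The gap is in Step 3, and it is exactly the hard point of the lemma. Euclidean division by the monic operator $\widetilde{P}$ is valid in the ring $\sD_{\sY,\bQ}$ of \emph{finite-order} differential operators, and it does show that $\sD_{\sY,\bQ}/\sD_{\sY,\bQ}\widetilde{P}$ is free of rank $r$; by density of $\sD_{\sY,\bQ}$ in $\sD^{\dag}_{\sY,\bQ}$ this gives at best the upper bound $\mathrm{rank}\,\sM\le r$ (this is how the paper's Lemma \ref{lem:phiisisom} gets surjectivity). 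But in $\sD^{\dag}_{U,\bQ}$ the division algorithm applied to an infinite series $\sum_k a_k(x)\partial^{[k]}$ produces infinitely many contributions to each coefficient of the quotient and remainder, with denominators of the shape $N(N-1)\cdots(N-r+1)$ coming from rewriting $\partial^{[N]}$ in terms of $\partial^{[N-r]}$, and convergence is not automatic; more seriously, ``uniqueness of the remainder'' fails to be formal: an \emph{infinite-order} operator $A$ has no leading term, so nothing prevents $A\widetilde{P}$ from having order $<r$, i.e.\ from producing a relation among $[1],[\partial],\dots,[\partial^{r-1}]$ in $\sD^{\dag}_{U,\bQ}/\sD^{\dag}_{U,\bQ}\widetilde{P}$. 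This is not a pedantic worry: whether such relations exist depends on the arithmetic of $\pi$ and of the $\alpha_i,\beta_j$ (compare the convergence estimates of Lemma \ref{lem:valofvp} needed already for Proposition \ref{prop:connectiontype}), and your argument never uses these hypotheses, so it cannot be complete. Appealing to the coherence hypothesis to ``rule out pathology'' only re-asserts what must be proved; coherence gives local freeness of some rank $s\le r$, not $s=r$.

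The paper closes this gap by a genuinely arithmetic argument: it computes the Frobenius trace of $\sHyp_{\psi,!}(\bchi;\brho)$ at every closed point over every finite extension $k_h$ (Proposition \ref{prop:frobeniustrace}), identifies these traces with Katz's hypergeometric sums, hence with the traces of the smooth $\ell$-adic hypergeometric sheaf of rank $r$, and concludes that the Frobenius acting on the fibre $\sH^{-1}\bigl(i_x^{\plus}\iota_{\kappa(x)/k}\sH_{\pi}(\balpha;\bbeta)\bigr)$ has exactly $r$ nonzero eigenvalues, whence the fibre has dimension $r$. Some input of this kind (a lower bound on the rank of the dagger quotient) is indispensable and is missing from your proposal.
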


\begin{proof}
    By assumption, there exists a convergent isocrystal $E$ on $Y$ satisfying $\sp_{\plus}E\cong\sH_{\pi}(\balpha;\bbeta)|_{\sY}$.
    Since $E$ is free as an $\sO_{\sY_K}$-module (being locally free over the principal ideal ring $\Gamma(\sY_K, \sO_{\sY_K})$), $\sH_{\pi}(\balpha;\bbeta)|_{\sY}$ is also free
    as an $\sO_{\sY,\bQ}$-module.
    Therefore, it remains to prove that its rank equals $r$.

    Let $x$ be a closed point of $Y$ with residue field $\kappa(x)$.
    Then, $i_x^{\plus}\iota_{\kappa(x)/k}\sH_{\pi}(\balpha;\bbeta)$ is concentrated at degree $-1$
    \cite[(1.2.14.1)]{AbeCaro};
    for proving the lemma, it suffices to show that the $(-1)$-st cohomology is of dimension $r$.

    By the scalar extension, we may assume that $x$ is a $k$-rational point.
    Let $\psi$ \resp{$\chi_i$'s and $\rho_j$'s} be a non-trivial character on $k$ \resp{characters on $k^{\times}$}
    corresponding to $\pi$ \resp{$\alpha_i$'s and $\beta_j$'s},
    so that $\sH_{\pi}(\balpha;\bbeta)\cong \sHyp_{\psi,\plus}(\bchi;\brho)[-(m+n)]$.
    For each positive integer $h$, let $k_h$ be a field extension of $k$ of degree $h$.
    By using the previous proposition, we see just as in \cite[(8.2.7)]{Katz90ESDE} that the Frobenius trace 
    at $x$ of $\iota_{k_h/k}\big(\sHyp_{\psi,!}(\bchi,\brho)\big)[-(m+n)]$ equals
    \begin{equation}
        (-1)^{m+n}(q^h)^{2(m+n)}\Hyp_{\psi\circ\Tr_{k_h/k}}(\bchi\circ\Norm_{k_h/k};\brho\circ\Norm_{k_h/k}).
        \label{eq:traceatx}
    \end{equation}
    The assumption and the purity \cite[(1.2.14.1)]{AbeCaro} shows that
    the trace of the Frobenius action on this $(-1)$-st cohomology is the minus of (\ref{eq:traceatx}).
    Since it equals the Frobenius trace of the smooth $\ell$-adic sheaf
    $\sH^{\ell}_{\psi,!}(\bchi;\brho)$ of rank $r$ at $x$,
    there exist ($q$-Weil) numbers $\gamma_1,\dots,\gamma_r$ independent of $h$ satisfying
    \[
        (-1)^{m+n+1}(q^h)^{2(m+n)}\Hyp_{\psi\circ\Tr_{k_h/k}}(\bchi\circ\Norm_{k_h/k};\brho\circ\Norm_{k_h/k})
        = \gamma_1^h+\gamma_2^h+\dots+\gamma_r^h.
    \]
    Since the left-hand side is the trace of the $h$-th iterate of the Frobenius action on 
    the $(-1)$-st cohomology of $i_x^{\plus}\sHyp_{\psi,\plus}(\bchi;\brho)[-(m+n)]$,
    this action has exactly $r$ eigenvalues (Note that there are no zero eigenvalue.)
    This shows that its dimension equals $r$.
\end{proof}

\begin{lemma}
    \label{lem:phiisisom}
    Let $T'$ be a divisor of $\bP^1_k$ that includes $T$, and put $\sY\defeq\widehat{\bP^1_V}\setminus T'$.
    Assume that $\sH_{\pi}(\balpha;\bbeta)|_{\sY}$ is a coherent $\sO_{\sY,\bQ}$-module.
    Then, the composition
    \[
        \sD_{\sY,\bQ}\longrightarrow \sD_{\sY,\bQ}^{\dag} \longrightarrow \sH_{\pi}(\balpha;\bbeta)|_{\sY}
    \]
    induces an isomorphism
    \[
        \varphi\colon \sD_{\sY,\bQ}/\sD_{\sY,\bQ}\Hyp_{\pi}(\balpha;\bbeta)\longrightarrow
        \sH_{\pi}(\balpha;\bbeta)|_{\sY}
    \]
    of $\sD_{\sY,\bQ}$-modules.
\end{lemma}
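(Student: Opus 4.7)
The plan is to exhibit both $\sM\defeq\sD_{\sY,\bQ}/\sD_{\sY,\bQ}\Hyp_{\pi}(\balpha;\bbeta)$ and $\sN\defeq\sH_{\pi}(\balpha;\bbeta)|_{\sY}$ as locally free $\sO_{\sY,\bQ}$-modules of rank $r$, each with a distinguished basis given by the classes of $1,\partial,\dots,\partial^{r-1}$, so that $\varphi$ is manifestly the identity in these bases.

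The starting point is Lemma \ref{lem:coefficientofhgoperator}: since $T\subseteq T'$, the leading coefficient $h_r(x)$ of $\Hyp_{\pi}(\balpha;\bbeta)$ vanishes only on $T$ and hence is a unit in $\sO_{\sY,\bQ}$. This invertibility is the engine of the whole argument, because it legitimises right Euclidean division by $\Hyp_{\pi}(\balpha;\bbeta)$ inside $\sD_{\sY,\bQ}$: iteratively subtracting $(p_s h_r(x)^{-1})\partial^{s-r}\Hyp_{\pi}(\balpha;\bbeta)$ from an operator whose leading term is $p_s\partial^s$ with $s\geq r$ reduces any local section of $\sD_{\sY,\bQ}$ modulo $\sD_{\sY,\bQ}\Hyp_{\pi}(\balpha;\bbeta)$ to an operator of order strictly less than $r$. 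Applied to $\sM$, this shows that the classes of $1,\partial,\dots,\partial^{r-1}$ generate $\sM$ as an $\sO_{\sY,\bQ}$-module; a standard comparison of order filtrations gives $\sO_{\sY,\bQ}$-linear independence, so $\sM$ is free of rank $r$ with the asserted basis.

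For $\sN$, Lemma \ref{lem:dimofstalks} already yields local freeness of rank $r$ over $\sO_{\sY,\bQ}$. The defining relation $\Hyp_{\pi}(\balpha;\bbeta)\cdot 1=0$ together with the invertibility of $h_r(x)$ rewrites $\partial^r\cdot 1$ as an $\sO_{\sY,\bQ}$-linear combination of $1,\partial\cdot 1,\dots,\partial^{r-1}\cdot 1$; inductively the same holds for every $\partial^k\cdot 1$, so $M\defeq\sum_{k=0}^{r-1}\sO_{\sY,\bQ}\,\partial^k\cdot 1$ is stable under $\partial$. Since $M$ contains the cyclic generator $1$, the $\sD^{\dag}_{\sY,\bQ}$-action forces $M=\sN$, and any surjection from a rank-$r$ locally free $\sO_{\sY,\bQ}$-module onto another of the same rank is automatically an isomorphism; hence $1,\partial\cdot 1,\dots,\partial^{r-1}\cdot 1$ form an $\sO_{\sY,\bQ}$-basis of $\sN$. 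By construction $\varphi$ sends the class of $\partial^k$ to $\partial^k\cdot 1$, so in these matched bases it is the identity $\sO_{\sY,\bQ}^{r}\to\sO_{\sY,\bQ}^{r}$, and the proof concludes.

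The step I expect to require the most care is the Euclidean reduction inside Berthelot's ring $\sD_{\sY,\bQ}$, since one has to check that the iterative subtraction really lives in this ring and terminates there. However, each reduction strictly decreases the differential order, and the scaling factor $h_r(x)^{-1}$ is a unit of $\sO_{\sY,\bQ}$ so no $p$-adic growth issue arises; the argument therefore mirrors the classical one in the Weyl algebra. Well-definedness of $\varphi$ (namely that $\Hyp_{\pi}(\balpha;\bbeta)$ annihilates the image of $1$ in $\sN$) and the compatibility of the two bases under $\partial$ are formal.
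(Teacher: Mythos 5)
Your reduction of both sides to free $\sO_{\sY,\bQ}$-modules of rank $r$ with basis the classes of $1,\partial,\dots,\partial^{r-1}$ is sound for the source $\sD_{\sY,\bQ}/\sD_{\sY,\bQ}\Hyp_{\pi}(\balpha;\bbeta)$ (Euclidean division by the operator, using that the leading coefficient $h_r(x)$ is a unit on $\sY$ by Lemma \ref{lem:coefficientofhgoperator}, plus the order filtration), and freeness of the target of rank $r$ is indeed Lemma \ref{lem:dimofstalks}. The gap is the sentence ``Since $M$ contains the cyclic generator $1$, the $\sD^{\dag}_{\sY,\bQ}$-action forces $M=\sN$.'' This is the entire content of the lemma, and it does not follow from what you have established. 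Your $M=\sum_{k=0}^{r-1}\sO_{\sY,\bQ}\,\partial^k\cdot 1$ is stable under $\partial$ and under $\sO_{\sY,\bQ}$, i.e.\ it is a $\sD_{\sY,\bQ}$-submodule --- in fact it is exactly the image of $\varphi$ --- but $\sN=\sH_{\pi}(\balpha;\bbeta)|_{\sY}$ is cyclic only over the larger ring $\sD^{\dag}_{\sY,\bQ}$, whose sections are infinite series $\sum_k a_k\partial^{[k]}$. Stability under the single operator $\partial$ does not a priori give stability under such infinite sums, so the statement ``the $\sD_{\sY,\bQ}$-span of $1$ equals the $\sD^{\dag}_{\sY,\bQ}$-span of $1$'' is precisely the surjectivity of $\varphi$ that you are trying to prove; as written, the step is circular.

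The missing ingredient is topological, and it is what the paper's proof supplies. Pass to global sections (legitimate because $\varphi$ is a morphism of coherent $\sO_{\sY,\bQ}$-modules and $\sY$ is affine, $T'$ containing $\{0,\infty\}$); the image of $\Gamma(\sY,\varphi)$ is dense because $\Gamma(\sY,\sD_{\sY,\bQ})$ is dense in $\Gamma(\sY,\sD^{\dag}_{\sY,\bQ})$ and $1$ generates $\sN$ over the latter; and the source is a finite module over the complete Noetherian Banach algebra $\Gamma(\sY,\sO_{\sY,\bQ})$, so its image is a closed submodule. Dense and closed gives surjectivity, and a surjection between locally free modules of the same finite rank over a commutative ring is an isomorphism (that last step of yours is fine). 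Equivalently, in your language: $M$ is a finitely generated submodule of a finite module over this Banach algebra, hence closed in $\sN$, and the partial sums of $\big(\sum_k a_k\partial^{[k]}\big)\cdot 1$ lie in $M$ and converge, so the limit does too, making $M$ a $\sD^{\dag}_{\sY,\bQ}$-submodule containing $1$. With that completeness/density paragraph added your argument closes; without it the key step is asserted rather than proved.
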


\begin{proof}
    Since the leading coefficient $h_r(x)$ of $\Hyp_{\pi}(\balpha;\bbeta)$ is invertible on $\sX$
    by Lemma \ref{lem:coefficientofhgoperator},
    $\sD_{\sX,\bQ}/\sD_{\sX,\bQ}\Hyp_{\pi}(\balpha;\bbeta)$
    is a free $\sO_{\sX,\bQ}$-module of rank $r$;
    The previous lemma shows that the target of $\varphi$ is
    also a free $\sO_{\sY,\bQ}$-module of rank $r$.

    Therefore, it suffices to show that $\varphi$ is surjective.
    Since $\varphi$ is a morphism of coherent $\sO_{\sY,\bQ}$-modules,
    it suffices to prove that the morphism $\Gamma(\sY,\varphi)$ of the global sections is surjective.
    It follows because it has a dense image and because the source is complete.
\end{proof}

\begin{proof}[Proof of Theorem \ref{thm:ocFisoc}]
    Let $T'$ be a divisor of $\bP^1_k$ containing $T$ that satisfies
    the assumption in Lemma \ref{lem:phiisisom},
    which exists because $\sH_{\pi}(\balpha;\bbeta)|_{\sY}$ is an overholonomic $F$-$\sD^{\dag}_{\sY,\bQ}$-module
    \cite[Th\'eor\`eme 2.2.17]{Caro06CM}.
    Since $\sD_{\sY,\bQ}/\sD_{\sY,\bQ}\Hyp_{\pi}(\balpha;\bbeta)$ is the restriction
    of $\sp_{\plus}H_{\pi}(\balpha;\bbeta)$ to $\sY$,
    the isocrystal $H_{\pi}(\balpha;\bbeta)|_{\sY_K}$ is a convergent isocrystal on $Y$ \cite[Th\'eor\`eme 2.2.12]{Caro06CM}.
    Since the convergent property of an isocrystal can be checked by the restricting to a dense open subset \cite[Theorem 2.16]{Ogus84},
    $H_{\pi}(\balpha;\bbeta)$ itself is a convergent isocrystal.
    This shows that the $\sD_{\sX,\bQ}$-module structure on $\sp_{\plus}H_{\pi}(\balpha;\bbeta)$
    extends to a $\sD^{\dag}_{\sX,\bQ}$-module structure,
    and we obtain an isomorphism
    $\sp_{\plus}H_{\pi}(\balpha;\bbeta)\cong\sH_{\pi}(\balpha;\bbeta)|_{\sX}$
    of $\sD^{\dag}_{\sX,\bQ}$-modules.
    In particular, $\sH_{\pi}(\balpha;\bbeta)|_{\sX}$ is $\sO_{\sX,\bQ}$-coherent,
    which shows the claim \cite[Th\'eor\`eme 2.2.12]{Caro06CM}.
\end{proof}

\begin{remark}
    Let $\psi$ \resp{$\chi_i$'s and $\rho_j$'s} be a non-trivial character on $k$ \resp{characters on $k^{\times}$}
    corresponding to $\pi$ \resp{$\alpha_i$'s and $\beta_j$'s}.
    Then, Proposition \ref{prop:frobeniustrace} shows that the Frobenius trace of
    $H_{\pi}(\balpha;\bbeta)$ equals $(-1)^{m+n+1}\Hyp_{\psi}(\bchi;\brho)$
    (The difference from the Frobenius trace of $\sHyp_{\psi,!}(\bchi;\brho)$ comes from 
    the isomorphism of purity \cite[(1.2.14.1)]{AbeCaro}).
    As a result, the Frobenius trace of $H_{\pi}(\balpha;\bbeta)$ coincides with that of $\ell$-adic hypergeometric sheaf,
    which we denoted by $\sH_{\psi,!}^{\ell}(\bchi;\brho)$ in the proof of Lemma \ref{lem:dimofstalks}.
    \label{rem:explicittraces}
\end{remark}

\subsection{Irreducibility and a characterization of hypergeometric isocrystals.}

We firstly prove that the arithmetic hypergeometric $\sD$-modules are irreducible.

\begin{proposition}
    Let $\psi$ be a non-trivial additive character on $k$,
    and let $\chi_1,\dots,\chi_m$ and $\rho_1,\dots,\rho_n$ be characters on $k^{\times}$
    that satisfies $\chi_i\neq\rho_j$ for any $i,j$.
    Then, the following statements hold.

    {\rm (i)} $\sHyp_{\psi,\plus}(\bchi;\brho)$ is an irreducible object of $F\hyphen D^{\rb}_{\ovhol}(\bG_{\rmm,k}/K)$.

    {\rm (ii)} $j_{\plus}\sHyp_{\psi,\plus}(\bchi;\brho)$ is an irreducible object of $F\hyphen D^{\rb}_{\ovhol}(\bA^1_k/K)$.
    \label{prop:irredofD}
\end{proposition}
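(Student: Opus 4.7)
The plan is to deduce (ii) from (i) by a middle-extension argument and to prove (i) by induction on $m+n$ via the Fourier-transform interpretation of the convolution.

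Deduction of (ii) from (i): combining Proposition \ref{prop:twohypareisom} with Corollary \ref{cor:middleext} shows that the canonical morphism
$j_!\sHyp_{\psi,\plus}(\bchi;\brho) \to j_\plus\sHyp_{\psi,\plus}(\bchi;\brho)$
is an isomorphism, so the target is the middle extension from $\bG_{\rmm,k}$ to $\bA^1_k$ of the object in (i). Since middle extension preserves simple objects in the perverse $t$-structure on $F\hyphen D^\rb_{\ovhol}$, (i) implies (ii).

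For (i), I argue by induction on $m+n$. The base cases $(m,n) \in \{(1,0),(0,1)\}$ yield shifts of rank-one convergent $F$-isocrystals, which are trivially irreducible. For the inductive step, Lemma \ref{lem:calcofh}(i) allows us to assume $m \geq 1$, and Lemma \ref{lem:calcofh}(ii) to normalize $\chi_1$ to be trivial by a Kummer twist. Under this normalization, taking $i=1$ in the non-resonance hypothesis $\chi_i \neq \rho_j$ yields $\beta_j \notin \bZ$ for every $j$. Writing $\bchi' := (\chi_2,\ldots,\chi_m)$, Proposition \ref{prop:fourierandconv} gives
\[ \sHyp_{\psi,\plus}(\bchi;\brho) \cong j^\ast\FT_\psi\bigl(j_\plus\inv^\ast\sHyp_{\psi,\plus}(\bchi';\brho)\bigr)[3], \]
reducing the problem to analyzing each functor on the right-hand side.

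By the inductive hypothesis, $\sHyp_{\psi,\plus}(\bchi';\brho)$ and hence its $\inv^\ast$-pullback is irreducible on $\bG_{\rmm,k}$; Lemma \ref{lem:calcofhyp}(i) identifies this pullback with a hypergeometric $\sD$-module whose parameters satisfy the hypotheses of Corollary \ref{cor:middleext} (this is where $\beta_j \notin \bZ$ is used), so $j_\plus$ again realizes the middle extension and the object on $\bA^1_k$ remains irreducible. The Fourier transform $\FT_\psi$ is induced by the ring automorphism $\varphi_{\pi_\psi}$ of $A_1(K)^\dag$ (Proposition \ref{prop:globalsectionofft}), hence is an auto-equivalence of the corresponding category of coherent modules and preserves simplicity. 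Finally, $j^\ast$ of a simple perverse $\sD$-module on $\bA^1_k$ is either zero or simple on $\bG_{\rmm,k}$; it is nonzero here because $\sHyp_{\psi,\plus}(\bchi;\brho)$ has generic rank $\max\{m,n\} \geq 1$.

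The main obstacle is ensuring that simplicity in the perverse $t$-structure is preserved through the chain $\inv^\ast, j_\plus, \FT_\psi, j^\ast$, in particular the $t$-exactness of $\FT_\psi$ on $F\hyphen D^\rb_{\ovhol}(\bA^1_k/K)$ and the compatibility with the middle-extension operation coming from Proposition \ref{prop:fourierandconv}. This ultimately reduces to the triviality that $\varphi_{\pi_\psi}$ is a ring isomorphism, but globalizing cleanly from the ring of global sections to the whole perverse category requires careful bookkeeping.
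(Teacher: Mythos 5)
Your argument is correct and follows essentially the same route as the paper: induction on $m+n$ after normalizing via $\inv^{\ast}$ and a Kummer twist, then transferring irreducibility through $j_{\plus}$ (realized as the middle extension via Corollary \ref{cor:middleext}), through the Fourier transform (an equivalence on modules by Proposition \ref{prop:globalsectionofft}), and through $j^{\ast}$, which preserves irreducible-or-zero. The only cosmetic difference is that the paper carries (i) and (ii) jointly through the induction rather than deducing (ii) from (i) at each stage.
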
 

\begin{proof}
    If $(m,n)=(0,0)$, then both assertions are trivial.
    We prove both (i) and (ii) by induction on $m+n$.
    In order to prove (i), 
    we first note that the irreducibility is preserved under taking $\inv^{\ast}$ and under the tensorisation of $\sK_{\gamma}$'s
    because these operations are equivalences of categories from $F\hyphen D^{\rb}_{\ovhol}(\bG_{\rmm,k}/K)$ to itself.
    Therefore, by the same argument as in the proof of Theorem \ref{thm:hypandconv},
    we may assume that $m>0$ and that $\chi_1$ is trivial.
    In the proof of Theorem \ref{thm:hypandconv}, we see that
    \[
        \sHyp_{\psi,\plus}(\bchi;\brho)\cong j^{\ast}\big(\FT_{\psi}(j_{\plus}\sHyp_{\pi_{\psi^{-1}},\plus}(\brho^{-1};\bchi'^{-1}))\big)
    \]
    if $\bchi'$ denotes the sequence $\chi_2,\dots,\chi_m$.
    By induction hypothesis, $j_{\plus}\sHyp_{\psi,\plus}(\brho^{-1};\bchi'^{-1})$
    is irreducible in $F\hyphen D^{\rb}_{\ovhol}(\bA^1_k/K)$.
    Because the geometric Fourier transform $\FT_{\psi}$ is an equivalence of categories,
    $\FT_{\psi}\big(j_{\plus}\sHyp_{\psi,\plus}(\brho^{-1};\bchi'^{-1})\big)$ is also irreducible.
    Because $j^{\ast}=j^!$ and because it preserves the property of being ``irreducible or zero'' \cite[1.4.6]{AbeCaro},
    the proof of (i) is done.
    To prove (ii), note that Corollary \ref{cor:middleext} shows that $j_{\plus}\sHyp_{\psi,\plus}(\bchi;\brho)$
    is the intermediate extension \cite[1.4.1]{AbeCaro} of $\sHyp_{\psi,\plus}(\bchi;\brho)$.
    Since the intermediate extension preserves the irreducibility \cite[1.4.7]{AbeCaro},
    (ii) is proved.
\end{proof}

This proposition also gives the irreducibility of hypergeometric isocrystals.

\begin{proposition}
    Let $\pi$ be an element of $K^{\times}$ satisfying $\pi^{q-1}=(-p)^{(q-1)/(p-1)}$,
    and let $\alpha_1,\dots,\alpha_m$ and $\beta_1,\dots,\beta_n$ be elements of $\frac{1}{q-1}\bZ$
    satisfying $\alpha_i-\beta_j\not\in\bZ$ for any $i,j$.
    If $(m,n)\neq (0,0)$, then the overconvergent $F$-isocrystal $H_{\pi}(\balpha;\bbeta)$
    on $X$ is irreducible, where $X$ denotes $\bG_{\rmm,k}$ if $m\neq n$ and $\bG_{\rmm,k}\setminus\{1\}$ if $m=n$.
\end{proposition}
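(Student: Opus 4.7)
The plan is to descend the irreducibility of the corresponding arithmetic hypergeometric $\sD$-module, already established in Proposition \ref{prop:irredofD}(i), through the two comparison results \ref{thm:hypandconv} and \ref{thm:ocFisoc}. No new computation is required.

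First, choose a non-trivial additive character $\psi$ on $k$ with $\pi_{\psi}=\pi$ (possible because of the hypothesis $\pi^{q-1}=(-p)^{(q-1)/(p-1)}$) and multiplicative characters $\chi_i,\rho_j$ on $k^{\times}$ corresponding to $\alpha_i,\beta_j\in\tfrac{1}{q-1}\bZ$ in the sense of Subsection 1.3. The hypothesis $\alpha_i-\beta_j\notin\bZ$ translates into $\chi_i\neq\rho_j$ for all $i,j$, so Theorem \ref{thm:hypandconv} applies and yields $\sHyp_{\psi,\plus}(\bchi;\brho)\cong\sH_{\pi}(\balpha;\bbeta)[m+n]$. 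By Proposition \ref{prop:irredofD}(i) the left-hand side is an irreducible object of $F\hyphen D^{\rb}_{\ovhol}(\bG_{\rmm,k}/K)$, so the same holds for $\sH_{\pi}(\balpha;\bbeta)$ up to the shift.

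Second, when $m=n$, let $j'\colon X=\bG_{\rmm,k}\setminus\{1\}\hookrightarrow\bG_{\rmm,k}$ be the open immersion. The functor $j'^{\ast}=j'^!$ preserves the property ``irreducible or zero'' (cf.\ the reference used in the proof of Proposition \ref{prop:irredofD}(i)); and $j'^{\ast}\sH_{\pi}(\balpha;\bbeta)$ is nonzero because by Theorem \ref{thm:ocFisoc} its underlying $\sO$-module is a convergent isocrystal on $X$ of rank $r>0$. Hence $\sH_{\pi}(\balpha;\bbeta)|_{\sX}$ is irreducible in $F\hyphen D^{\rb}_{\ovhol}(X/K)$. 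In the case $m\neq n$ no restriction is needed, since $X=\bG_{\rmm,k}$ already.

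Third, Theorem \ref{thm:ocFisoc} yields the isomorphism $\sp_{\plus}H_{\pi}(\balpha;\bbeta)\cong\sH_{\pi}(\balpha;\bbeta)|_{\sX}$ as coherent $\sD^{\dag}_{\sX,\bQ}$-modules with Frobenius structure. Since the functor $\sp_{\plus}\colon F\hyphen\Isoc^{\dag}(X/K)\to F\hyphen\Coh\bigl(\sD^{\dag}_{\sX,\bQ}(\pdag{T})\bigr)$ recalled in Subsection 1.1 is fully faithful, it reflects subobjects: any proper overconvergent $F$-subisocrystal of $H_{\pi}(\balpha;\bbeta)$ would produce, via $\sp_{\plus}$, a proper coherent sub-$F$-$\sD^{\dag}$-module of $\sp_{\plus}H_{\pi}(\balpha;\bbeta)$, contradicting the irreducibility from the previous step. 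Therefore $H_{\pi}(\balpha;\bbeta)$ is irreducible.

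The only point that needs any attention is the bookkeeping between the various notions of irreducibility (in the heart of the t-structure on $F\hyphen D^{\rb}_{\ovhol}$, in the abelian category $F\hyphen\Coh(\sD^{\dag})$, and in $F\hyphen\Isoc^{\dag}$), but once one identifies these via the shifts and the essential image description of $\sp_{\plus}$, the argument is essentially formal.
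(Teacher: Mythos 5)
Your proposal is correct and follows essentially the same route as the paper: irreducibility of $\sHyp_{\psi,\plus}(\bchi;\brho)$ from Proposition \ref{prop:irredofD}, restriction to $\bG_{\rmm,k}\setminus\{1\}$ via the extraordinary pull-back (which preserves ``irreducible or zero'') in the case $m=n$, and transfer to the isocrystal through the comparison theorems and the faithful exact functor $\sp_{\plus}$. The paper states this in two sentences and leaves the comparison and the $\sp_{\plus}$ step implicit; your write-up simply makes those steps explicit.
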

\begin{proof}
    If $m\neq n$, then it follows from Proposition \ref{prop:irredofD}.
    If $m=n$, then it follows from the irreducibility of $k^{!}\sHyp_{\psi,\plus}(\bchi;\brho)$,
    where $k\colon \bG_{\rmm,k}\setminus\{1\}\hookrightarrow\bG_{\rmm,k}$ is the inclusion,
    which is a consequence of Proposition \ref{prop:irredofD} and \cite[Lemma 1.4.6]{AbeCaro}.
\end{proof}

At last, we give a characterization of hypergeometric isocrystals in terms of Frobenius trace function.

\begin{proposition}
    Let $\psi$ be a non-trivial additive character on $k$,
    and let $\chi_1,\dots,\chi_m$ and $\rho_1,\dots,\rho_n$ be characters on $k^{\times}$
    that satisfies $\chi_i\neq\rho_j$ for any $i,j$;
    let $\pi$ \resp{$\alpha_i$'s and $\beta_j$'s} be elements of $K$ \resp{$\frac{1}{q-1}\bZ$}
    corresponding to $\psi$ \resp{$\chi_i$'s and $\rho_j$'s}.
    Let $X$ be $\bG_{\rmm,k}$ if $m\neq n$ and be $\bG_{\rmm,k}\setminus\{1\}$ if $m=n$.

    Let $\iota\colon \overline{K}\cong\bC$ is an isomorphism.
    Let $\sF$ be an $\iota$-mixed overconvergent $F$-isocrystal on $X$
    whose Frobenius trace at each closed point $x$ equals $(-1)^{m+n+1}\Hyp_{\psi}(\bchi;\brho)$.
    Then, there exists an isomorphism $\sF\to H_{\pi}(\balpha;\bbeta)$ of overconvergent $F$-isocrystals.
    Moreover, we have
    \[\dim_K\Hom(\sF, H_{\pi}(\balpha;\bbeta))=1.\]
\end{proposition}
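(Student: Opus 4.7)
The plan is to deduce the isomorphism from a $p$-adic Chebotarev-style density theorem for $\iota$-mixed overconvergent $F$-isocrystals, using the already-established irreducibility of $H_{\pi}(\balpha;\bbeta)$. First, by Remark \ref{rem:explicittraces}, the Frobenius trace of $H_{\pi}(\balpha;\bbeta)$ at every closed point of $X$ equals $(-1)^{m+n+1}\Hyp_{\psi}(\bchi;\brho)$, and by hypothesis this coincides with the Frobenius trace of $\sF$. Thus $\sF$ and $H_{\pi}(\balpha;\bbeta)$ are two $\iota$-mixed overconvergent $F$-isocrystals on $X$ whose Frobenius trace functions agree identically on closed points.

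Next, I would invoke the density theorem to the effect that an $\iota$-mixed overconvergent $F$-isocrystal on a smooth variety over $k$ is determined, up to semisimplification, by its Frobenius trace function (the $p$-adic analogue of the $\ell$-adic theorem, available in the arithmetic $\sD$-module framework through work of Abe-Caro and Kedlaya). Hence $\sF^{\ss}\cong H_{\pi}(\balpha;\bbeta)^{\ss}$. The previous proposition (irreducibility of $H_{\pi}(\balpha;\bbeta)$) then gives $H_{\pi}(\balpha;\bbeta)^{\ss}=H_{\pi}(\balpha;\bbeta)$, and since both sides have rank $r$ as $\sO_{\sX_K}$-modules, $\sF$ must itself be of composition length one, so $\sF \cong H_{\pi}(\balpha;\bbeta)$.

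For the final assertion $\dim_K \Hom(\sF, H_{\pi}(\balpha;\bbeta))=1$, once the isomorphism is established the question reduces to $\dim_K \End(H_{\pi}(\balpha;\bbeta))=1$. By Schur's lemma in the category $F\hyphen D^{\rb}_{\ovhol}(X/K)$, the endomorphism ring $E\defeq\End(H_{\pi}(\balpha;\bbeta))$ is a finite-dimensional division algebra over $K$. If $E\supsetneq K$, then after scalar extension by a nontrivial subextension the object would decompose, contradicting that the Frobenius eigenvalues, computed from $(-1)^{m+n+1}\Hyp_{\psi}(\bchi;\brho)$ via Proposition \ref{prop:frobeniustrace} and the rank-$r$ decomposition $\sum\gamma_i^h$ in the proof of Lemma \ref{lem:dimofstalks}, already live in $K$ and are distinct (after a harmless scalar extension if necessary). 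Hence $E=K$.

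The main obstacle I expect is formulating and citing the density theorem in precisely the right form for $\iota$-mixed overconvergent $F$-isocrystals on $X$ (as opposed to the more classical $\ell$-adic setting), and pinning down $\End(H_{\pi}(\balpha;\bbeta))=K$ rather than just a finite extension of $K$; the rest is a matter of combining irreducibility, equality of ranks, and Schur's lemma.
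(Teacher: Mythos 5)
Your main line of argument coincides with the paper's: match the Frobenius trace functions via Remark \ref{rem:explicittraces}, apply the \v{C}ebotarev-type density theorem for overconvergent $F$-isocrystals to identify the semisimplifications, and use the irreducibility of $H_{\pi}(\balpha;\bbeta)$ from the preceding proposition to conclude $\sF\cong H_{\pi}(\balpha;\bbeta)$. However, the density theorem you invoke applies to $\iota$-mixed objects, and while $\sF$ is $\iota$-mixed by hypothesis, you assert without justification that $H_{\pi}(\balpha;\bbeta)$ is as well. This is exactly the point the paper takes care of first: since the Frobenius traces of $H_{\pi}(\balpha;\bbeta)$ agree with those of Katz's $\ell$-adic hypergeometric sheaf $\sH^{\ell}_{\psi,!}(\bchi;\brho)$, which is pointwise pure by \cite[Theorem 8.4.2 (4)]{Katz90ESDE}, the isocrystal $H_{\pi}(\balpha;\bbeta)$ is pointwise $\iota$-pure. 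Without some such input, the hypotheses of the density theorem are not verified for the pair $(\sF, H_{\pi}(\balpha;\bbeta))$.

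Your treatment of $\dim_K\Hom(\sF,H_{\pi}(\balpha;\bbeta))=1$ also departs from the paper, which deduces this directly from the irreducibility of $\sF$. You rightly observe that Schur's lemma a priori only yields a finite-dimensional division algebra over $K$, but your argument that this algebra equals $K$ rests on the claims that the Frobenius eigenvalues $\gamma_1,\dots,\gamma_r$ lie in $K$ and are pairwise distinct. Neither claim is established: the $\gamma_i$ appearing in the proof of Lemma \ref{lem:dimofstalks} are merely $q$-Weil numbers, nothing in the paper places them in $K$, and nothing excludes repeated eigenvalues. As written this step does not go through; to pin down the endomorphism ring you would need either the multiplicity (\emph{i.e.}, $L$-function) refinement of the density theorem or some genuine argument for absolute irreducibility.
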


\begin{proof}
    First, note that $H_{\pi}(\balpha;\bbeta)$ is pointwise $\iota$-pure
    because the Frobenius traces coincide with those of a smooth $\ell$-adic sheaf
    $\sH^{\ell}_{\psi}(\bchi;\brho)$ and the latter is so \cite[Theorem 8.4.2 (4)]{Katz90ESDE}.
    Therefore, by the ``\v{C}ebotarev density theorem'' for overconvergent $F$-isocrystals \cite[3]{Abe11},
    the semi-simplifications of $\sF$ and $H_{\pi}(\balpha;\bbeta)$ are isomorphic to each other.
    Because $H_{\pi}(\balpha;\bbeta)$ is irreducible by the previous proposition,
    $\sF$ is itself isomorphic to $H_{\pi}(\balpha;\bbeta)$.
    In particular, $\sF$ is irreducible, which shows the last part of the proposition.
\end{proof}

\end{document}